\documentclass[11pt]{amsart}
\usepackage[T1]{fontenc}
\usepackage{lmodern}
\usepackage{microtype}
\usepackage{mathtools}
\usepackage[margin=1.15in]{geometry}
\usepackage{amsmath}
\usepackage{amssymb}
\usepackage{amsthm}
\usepackage{mathrsfs}
\usepackage{longtable}
\usepackage{pdflscape}
\usepackage{diagbox}
\usepackage{xcolor}
\usepackage[colorlinks,linkcolor=blue,anchorcolor=blue,citecolor=blue,backref=page]{hyperref}
\usepackage{hyperref}
\usepackage{comment}
\newtheorem{thm}{Theorem}

\newtheorem{prop}[thm]{Proposition}
\newtheorem{coro}[thm]{Corollary}
\newtheorem{conj}[thm]{Conjecture}
\newtheorem{rmk}[thm]{Remark}

\newcommand{\Z}{\mathbb{Z}}

\def\ker{\mathrm{ker}}

\newcommand{\GL}{\mathrm{GL}}

 \def\det {\mathop{\mathrm{det}}\nolimits}

\newcommand{\SL}{\mathrm{SL}}

\newcommand{\rank}{\mathrm{rank}}
\newcommand{\vcd}{\mathrm{vcd}}
\newcommand{\cd}{\mathrm{cd}}

\title[Highest-Weight Euler Characteristics of $\mathrm{SL}_4(\mathbb{Z})$ and $\mathrm{GL}_4(\mathbb{Z})$]{Euler Characteristics of $\mathrm{SL}_4(\mathbb{Z})$ and $\mathrm{GL}_4(\mathbb{Z})$, and their cohomological consequences}

\author{Jitendra Bajpai}
\address{Department of Mathematics, Christian-Albrechts-Universit\"at zu Kiel, Heinrich-Hecht-Platz 6, 24118 Kiel, Germany and Einstein Institute of Mathematics, Edmond J. Safra Campus, The Hebrew University of Jerusalem, Givat Ram, Jerusalem 9190401, Israel}
\email{jitendra@math.uni-kiel.de}
\author{Taiwang Deng}
\address{Beijing Institute of Mathematical Sciences and\allowbreak{} Applications (BIMSA), Huairou District, Beijing, China}
\email{dengtaiw@bimsa.cn}
\subjclass[2020]{Primary 11F75; Secondary 11F06, 20J06, 20G05}
\keywords{Arithmetic groups, homological Euler characteristics, Wall's formula, conjugacy classes of torsion elements, highest weight representations, rational generating functions, quasi-polynomials}

\begin{document}
\date{\today}


\begin{abstract}
We compute the homological Euler characteristics of $\mathrm{SL}_4(\mathbb{Z})$ and $\mathrm{GL}_4(\mathbb{Z})$ with coefficients in arbitrary irreducible rational highest-weight representations. Applying Wall’s formula, we combine the orbifold Euler characteristics of centralizers of torsion elements with traces computed using the Jacobi–Trudi identity to derive explicit formulas and rational generating functions. Consequently, these Euler characteristics are quasi-polynomial functions of the highest-weight parameters, of total degree at most two.

We further derive degreewise vanishing results and parity-sensitive lower bounds for the dimensions of  cohomology groups. The two extensions of an $\mathrm{SL}_4(\mathbb{Z})$-coefficient system to $\mathrm{GL}_4(\mathbb{Z})$ yield sharper bounds, which grow linearly or quadratically in explicit infinite families. For symmetric powers, combining our formulas with Horozov’s calculation of the determinant-twisted summand yields exact identities and lower bounds for the untwisted summand, together with a conjectural degreewise description of its cohomology.
\end{abstract}

\maketitle

\tableofcontents

\section{Introduction}\label{se:intro}
Euler characteristics are fundamental invariants connecting the cohomology of arithmetic groups with the geometry of their associated locally symmetric spaces. Let $\mathbf G$ be a connected reductive algebraic group defined over $\mathbb Q$, let $\Gamma\subseteq\mathbf G(\mathbb Q)$ be an arithmetic subgroup, and let
\[X=\mathbf G(\mathbb R)/K_\infty,\]
where $K_\infty\subseteq\mathbf G(\mathbb R)$ is a maximal compact subgroup. Since $\Gamma$ generally contains torsion elements, its action on $X$ need not be free, and the quotient $\Gamma\backslash X$ naturally has an orbifold structure. This leads to the orbifold Euler characteristic $\chi_{\mathrm{orb}}(\Gamma)$. On the other hand, for a finite-dimensional rational $\Gamma$-module $M$, the homological Euler characteristic is defined by
\[\chi_h(\Gamma,M)=\sum_{i\geq 0}(-1)^i \dim_{\mathbb{Q}}H^i(\Gamma,M),\]
whenever these cohomology groups are finite-dimensional and vanish outside a finite range. The purpose of this article is to compute the latter invariant for $\Gamma=\mathrm{SL}_4(\mathbb{Z})$ and $\mathrm{GL}_4(\mathbb{Z})$ with arbitrary irreducible highest weight coefficient systems.

Historically, Euler-characteristic computations for arithmetic groups have been closely connected with volume formulas for locally symmetric spaces. Harder's extension of the Gauss--Bonnet theorem to noncompact arithmetic quotients identifies the orbifold Euler characteristic of an arithmetic group with the volume of the corresponding quotient, measured using the appropriately normalized Euler--Poincar\'e measure~\cite{Harder71}. For certain arithmetic groups, the explicit evaluation of this measure yields formulas involving special values of zeta functions. In the present work, such orbifold Euler characteristics occur as the centralizer weights in Wall's formula for the homological Euler characteristic with coefficients.

The homological Euler characteristic records only the alternating sum of cohomological dimensions, rather than the individual cohomology groups. Nevertheless, it is often the first global invariant available when the coefficient system varies, and explicit formulas can reveal vanishing, congruence, and growth patterns that are difficult to see from isolated cohomology computations. For $\mathrm{SL}_4(\mathbb{Z})$, the associated symmetric space
\[ 
X_4=\mathrm{SL}_4(\mathbb{R})/\mathrm{SO}(4) 
\] 
has dimension $9$. Correspondingly, the Euler--Poincar\'e measure vanishes, and hence $$\chi_{\mathrm{orb}}(\mathrm{SL}_4(\mathbb{Z}))=0.$$ This is the value for the trivial coefficient system, but it does not imply the vanishing of the homological Euler characteristic for arbitrary coefficients. The present article studies precisely how the homological Euler characteristic varies with the highest weight.

\subsection{Background and previous work}

The homological and orbifold Euler characteristics of arithmetic groups were developed in the work of Serre~\cite{Serre71} and Harder~\cite{Harder71}; see also Brown~\cite{Brown94} for the general cohomological framework. The formula of Wall~\cite{Wall} expresses the homological Euler characteristic with coefficients as a sum over conjugacy classes of torsion elements, weighted by the orbifold Euler characteristics of their centralizers. Related fixed-point formulas and extensions were developed by Brown~\cite{Brown1982} and Chiswell~\cite{Chiswell76}.

For the groups considered here, computations with nontrivial coefficients were carried out by Horozov~\cite{Horozov2005,Horozov2014} for families consisting of symmetric powers of the standard representation, with determinant twists in the $\mathrm{GL}_4$ case. Our aim is to treat arbitrary irreducible highest weight coefficient systems for both $\mathrm{SL}_4(\mathbb{Z})$ and $\mathrm{GL}_4(\mathbb{Z})$ and to organize the resulting formulas by means of rational generating functions.

\subsection{Method and main results}

Let $\mathcal{M}_\lambda$ denote the irreducible representation of highest weight $\lambda$. Wall's formula takes the form
\[ \chi_h(\Gamma,\mathcal{M}_\lambda)=\sum_{(T)}\chi_{\mathrm{orb}}\bigl(C_\Gamma(T)\bigr)\operatorname{Tr}\bigl(T^{-1}\mid\mathcal{M}_\lambda\bigr),\]
where $(T)$ runs over the $\Gamma$-conjugacy classes of torsion elements. Thus the computation separates into three tasks: determining the conjugacy classes that can contribute, computing the orbifold Euler characteristics of their centralizers, and evaluating the traces of their representatives on $\mathcal{M}_\lambda$.

We first determine the possible cyclotomic characteristic polynomials of torsion elements and organize the corresponding conjugacy-class contributions using Horozov's compressed formula. We then compute the traces in Wall's formula by applying the Jacobi--Trudi identity to the eigenvalues of each representative. Combining these trace calculations with the centralizer weights gives explicit formulas for the homological Euler characteristics. The formulas exhibit a parity phenomenon in the highest weight parameters, governed more precisely by their residue classes modulo $12$. Throughout, when we say that a multivariable quasi-polynomial has period $p$, we mean that $p$ is its least positive common scalar period; equivalently, $p$ is the least positive integer for which the function is polynomial on every residue class modulo $p$ in all variables simultaneously.

For $\mathrm{SL}_4(\mathbb{Z})$, the highest weights are parameterized by three nonnegative integers $(\ell_1,\ell_2,\ell_3)$. We collect the Euler characteristics into a rational generating function $P(x,y,z)$. For $\mathrm{GL}_4(\mathbb{Z})$, the determinant component of the highest weight reduces, for the purpose of these Euler characteristics, to its parity. The corresponding untwisted and determinant-twisted contributions are encoded by rational generating functions $P_1(x,y,z)$ and $P_2(x,y,z)$. These generating functions give a uniform description of the computed invariants. Their denominators also provide an interpretation in terms of vector partition functions, from which quasi-polynomiality follows.

\subsection{Outline of the article}

Section~\ref{se:basic} recalls the homological and orbifold Euler characteristics, Wall's formula, and the resultant factors used in the computations. Section~\ref{se:torsion} determines the possible characteristic polynomials and the relevant conjugacy classes of torsion elements in $\mathrm{SL}_4(\mathbb{Z})$ and $\mathrm{GL}_4(\mathbb{Z})$. Section~\ref{se:traces} computes the traces of representatives of these classes on arbitrary highest weight representations using the Jacobi--Trudi identity. Section~\ref{se:Euler-Sl4} evaluates Wall's formula for $\mathrm{SL}_4(\mathbb{Z})$, derives the generating function $P(x,y,z)$, and explains the resulting quasi-polynomiality. Section~\ref{se:Euler-GL4} carries out the corresponding calculation for $\mathrm{GL}_4(\mathbb{Z})$ and obtains the generating functions $P_1(x,y,z)$ and $P_2(x,y,z)$. Section~\ref{se:cohomological-consequences} derives cohomological vanishing results and parity-sensitive lower bounds, studies infinite families with growing cohomology, and establishes exact identities and a degreewise conjecture for symmetric powers. Finally, Section\ref{se:conclusion} discusses cohomological applications and further directions involving boundary, Eisenstein, and cuspidal cohomology, residue-polynomial nonvanishing, and higher-rank generalizations.

\section{Background and Notation}\label{se:basic}

Let $\Gamma$ be a group and let $\mathcal{V}$ be a finite-dimensional rational $\Gamma$-module. Assume that the cohomology groups $H^i(\Gamma,\mathcal{V})$ are finite-dimensional and vanish for all sufficiently large $i$, the \emph{homological Euler characteristic} of $\Gamma$ with coefficients in $\mathcal{V}$ is
\begin{equation}\label{eq:hec}
  \chi_h(\Gamma,\mathcal{V}) =\sum_{i\geq 0}(-1)^i\dim_{\mathbb{Q}}H^i(\Gamma,\mathcal{V}).
\end{equation}

The arithmetic groups considered in this article are finitely generated linear groups over a field of characteristic zero. Consequently, Selberg's lemma~\cite{Selberg60} ensures that each such group $\Gamma$ contains a torsion-free subgroup $\Gamma'\subseteq\Gamma$ of finite index, and the Borel--Serre compactification~\cite{BorelSerre73} shows that $\Gamma'$ admits a finite classifying space. Hence, the required finiteness hypotheses are satisfied: for every finite-dimensional rational $\Gamma$-module $\mathcal{V}$, the groups $H^i(\Gamma,\mathcal{V})$ are finite-dimensional and vanish in degrees above the virtual cohomological dimension of $\Gamma$. See~\cite{Brown94,Serre71} for the general cohomological framework.

Suppose now that $\Gamma$ admits a torsion-free subgroup $\Gamma'$ of finite index for which $\chi_h(\Gamma',\mathbb{Q})$ is defined. The \emph{orbifold Euler characteristic} of $\Gamma$ is
\begin{equation}\label{eq:orbeuler}
  \chi_{\mathrm{orb}}(\Gamma)=\frac{\chi_h(\Gamma',\mathbb{Q})}{[\Gamma:\Gamma']}.
\end{equation}
This number is independent of the choice of $\Gamma'$. In particular, if $\Gamma$ itself is torsion-free, then $\chi_{\mathrm{orb}}(\Gamma)=\chi_h(\Gamma,\mathbb{Q})$. Consequently, their orbifold Euler characteristics are defined.

Wall's formula~\cite{Wall} relates these two Euler characteristics. In the setting used here, it takes the form
\begin{equation}\label{eq:hecT}
  \chi_h(\Gamma,\mathcal{V})=\sum_{(T)}\chi_{\mathrm{orb}}\bigl(C_\Gamma(T)\bigr)\operatorname{Tr}\bigl(T^{-1}\mid\mathcal{V}\bigr),
\end{equation}
where $(T)$ runs over the $\Gamma$-conjugacy classes of torsion elements and
\[
  C_\Gamma(T)=\{\gamma\in\Gamma: \gamma T=T\gamma\}
\]
is the centralizer of $T$ in $\Gamma$. Thus the orbifold Euler characteristics of the centralizers occur as weights in the computation of the homological Euler characteristic with coefficients.

We shall use the following standard properties whenever the Euler characteristics involved are defined:
\begin{enumerate}
\item If $\Gamma$ is torsion-free, then $\chi_{\mathrm{orb}}(\Gamma)=\chi_h(\Gamma,\mathbb{Q})$.
\item If $\Gamma$ is finite, then $\chi_{\mathrm{orb}}(\Gamma)=1/|\Gamma|$.
\item For a short exact sequence
  \[
    1\longrightarrow\Gamma_1\longrightarrow\Gamma \longrightarrow\Gamma_2\longrightarrow 1
  \]
  of groups satisfying the appropriate finiteness hypotheses, one has
  \[
    \chi_{\mathrm{orb}}(\Gamma) =\chi_{\mathrm{orb}}(\Gamma_1) \chi_{\mathrm{orb}}(\Gamma_2).
  \]
\end{enumerate}

We next introduce the resultant factor occurring in the specialized form of Wall's formula used below. If $f_1(x)=\prod_i(x-\alpha_i)$ and
$f_2(x)=\prod_j(x-\beta_j)$, set
\[
  \operatorname{Res}(f_1,f_2) =\prod_{i,j}(\alpha_i-\beta_j).
\]
Let $f=f_1\cdots f_d$, where the $f_i$ are powers of pairwise distinct irreducible polynomials over $\mathbb{Q}$. We define
\[
  \operatorname{Res}(f)  =\prod_{i<j}\operatorname{Res}(f_i,f_j),
\]
with the convention that $\operatorname{Res}(f)=1$ when $d=1$. Only the absolute value $\left|\operatorname{Res}(f)\right|$ will occur below, so the resulting factor is independent of the ordering of the factors $f_i$.

For later use, put
\[
T_3=\begin{pmatrix}0&1\\-1&-1\end{pmatrix},\qquad
T_4=\begin{pmatrix}0&1\\-1&0\end{pmatrix},\qquad
T_6=\begin{pmatrix}0&-1\\1&1\end{pmatrix}.
\]
Horozov's specialization of Wall's formula~\cite{Horozov2005,Horozov2014} states that, for $\Gamma=\mathrm{GL}_n(\mathbb{Z})$, or for $\Gamma=\mathrm{SL}_n(\mathbb{Z})$ with $n$ odd,
\begin{equation}\label{eq:hnthor}
  \chi_h(\Gamma,\mathcal{V})=\sum_A \bigl\lvert\operatorname{Res}(f_A)\bigr\rvert\chi_{\mathrm{orb}}\bigl(C_\Gamma(A)\bigr) \operatorname{Tr}\bigl(A^{-1}\mid\mathcal{V}\bigr),
\end{equation}
where $f_A$ is the characteristic polynomial of $A$. The sum is over the block diagonal matrices $A\in\Gamma$ satisfying the following conditions, with matrices that differ only in the ordering of their blocks counted once:
\begin{itemize}
\item every diagonal block belongs to $\{1,-1,T_3,T_4,T_6\}$;
\item each of $T_3,T_4,T_6$ occurs at most once;
\item each of $1$ and $-1$ occurs at most twice.
\end{itemize}
Here, two copies of $1$, respectively of $-1$, correspond to the blocks $I_2$, respectively $-I_2$, in Horozov's notation. In particular, no such matrix exists when $n>10$, and hence the sum in~\eqref{eq:hnthor} is then empty. More precisely, the scalar blocks contribute at most four dimensions, while the three blocks $T_3,T_4,T_6$ contribute at most six. Hence no matrix satisfying these block conditions exists when $n>10$, and the sum in~\eqref{eq:hnthor} is empty in that range.

Every matrix $A$ occurring in \eqref{eq:hnthor} has the same multiset of eigenvalues as $A^{-1}$, so $A$ and $A^{-1}$ are conjugate in $\mathrm{GL}_n(\mathbb{C})$. The coefficient systems considered here are restrictions of rational algebraic representations; their characters are constant on $\mathrm{GL}_n(\mathbb{C})$-conjugacy classes. Therefore
\[
  \operatorname{Tr}\bigl(A^{-1}\mid\mathcal{V}\bigr)=\operatorname{Tr}\bigl(A\mid\mathcal{V}\bigr).
\]
We will use this simplification throughout. Related extensions of Wall's formula to other classes of groups were developed by Chiswell~\cite{Chiswell76}.

Note that for $n=4$, we apply~\eqref{eq:hnthor} directly to $\mathrm{GL}_4(\mathbb Z)$; the corresponding formula for $\mathrm{SL}_4(\mathbb Z)$ will be obtained in Subsection~\ref{ss:passage-GLtoSL} from the index-two relation between these two groups.

\section{Torsion Elements in \texorpdfstring{$\mathrm{SL}_4(\mathbb{Z})$}{SL4(Z)} and \texorpdfstring{$\mathrm{GL}_4(\mathbb{Z})$}{GL4(Z)}}\label{se:torsion}

Wall's formula~\eqref{eq:hecT} expresses the homological Euler characteristic as a sum indexed by conjugacy classes of torsion elements. As a first step toward determining the classes that can occur, we classify the possible characteristic polynomials of torsion elements in $\mathrm{GL}_4(\mathbb{Z})$ and identify those satisfying the determinant-one condition.

Let $T\in\mathrm{GL}_4(\mathbb Z)$ be an element of finite order $k$. For $d\geq 1$, let $\zeta_d=e^{2\pi i/d}$. The $d$-th cyclotomic polynomial is
\[
    \Phi_d(x)=\prod_{\substack{1\leq j\leq d\\ \gcd(j,d)=1}}\left(x-\zeta_d^j\right)\in\mathbb Z[x].
\]
Its roots are precisely the primitive $d$-th roots of unity, and
\[
    x^k-1=\prod_{d\mid k}\Phi_d(x).
\]
Writing
\[
    \varphi(d)=\#\{1\leq j\leq d:\gcd(j,d)=1\}
\]
for Euler's totient function, we have $\deg\Phi_d=\varphi(d)$.

Since $T^k=I$, its minimal polynomial divides $x^k-1$. In characteristic zero, $x^k-1$ is separable. Hence the minimal polynomial of $T$ is a product of distinct cyclotomic polynomials, and $T$ is semisimple over $\mathbb C$. Its characteristic polynomial therefore has the form
\[
    f_T(x)=\det(xI-T)=\prod_{d\geq 1}\Phi_d(x)^{m_d},\qquad m_d\in\mathbb Z_{\geq 0},
\]
where only finitely many $m_d$ are nonzero. Since $\deg f_T=4$, these multiplicities satisfy
\[
    \sum_{d\geq 1}m_d\varphi(d)=4.
\]

In particular, only those $d$ for which $\varphi(d)\leq 4$ can occur; namely,
\[
  d\in\{1,2,3,4,5,6,8,10,12\}.
\]
Enumerating the solutions of the degree equation gives the $24$ possible characteristic polynomials listed in Table~\ref{characteristic-gl4}. Each listed polynomial is realized by an integral matrix of finite order: for $\prod_d\Phi_d^{m_d}$, take the block diagonal matrix containing $m_d$ copies of the companion matrix of $\Phi_d$, which has order $d$.

It remains to determine which of these polynomials can occur for elements of $\mathrm{SL}_4(\mathbb{Z})$. Since $f_T(x)=\det(xI-T)$ has degree four, its constant term is $f_T(0)=\det(T)$. Moreover,
\[
  \Phi_1(0)=-1\qquad\text{and}\qquad \Phi_d(0)=1\quad(d>1).
\]
Consequently,
\[
  \det(T)=f_T(0)=(-1)^{m_1}.
\]
Thus a torsion element of $\mathrm{GL}_4(\mathbb{Z})$ belongs to $\mathrm{SL}_4(\mathbb{Z})$ if and only if the multiplicity of $\Phi_1(x)=x-1$ in its characteristic polynomial is even. Of the $24$ possible characteristic polynomials, exactly $19$ satisfy this condition.

We emphasize that Table~\ref{characteristic-gl4} classifies possible characteristic polynomials, rather than integral conjugacy classes. The characteristic polynomial determines the rational conjugacy type of a finite-order matrix, but it need not determine its integral conjugacy class. Indeed, Yang's classification~\cite{Yang2015} gives $45$ $\mathrm{GL}_4(\mathbb{Z})$-conjugacy classes of torsion elements, although only the $24$ characteristic polynomials listed below occur. Thus several integral conjugacy classes may have the same characteristic polynomial. This distinction is incorporated in Horozov's compressed form of Wall's formula~\eqref{eq:hnthor}, where the absolute resultant accounts for the combined contribution of the integral conjugacy classes corresponding to a given block type (that is sharing the same characteristic polynomial). The present section therefore provides only the preliminary classification by characteristic polynomial.

{\begin{center}
\scriptsize\renewcommand{\arraystretch}{2.0}
\begin{longtable}{|c|c|c|c|}
\caption{Torsion characteristic polynomials in $\mathrm{GL}_4(\mathbb{Z})$ and $\mathrm{SL}_4(\mathbb{Z})$.}
\label{characteristic-gl4}\\
\hline
No. & Polynomial & Expanded form & Occurs In $\mathrm{SL}_4(\mathbb{Z})$ \\
\hline
\hline
1 & $\Phi_1^4$ & $(x-1)^4$ & Yes \\
\hline
2 & $\Phi_1^3\Phi_2$ & $(x-1)^3(x+1)$ & No \\
\hline
3 & $\Phi_1^2\Phi_2^2$ & $(x-1)^2(x+1)^2$ & Yes \\
\hline
4 & $\Phi_1^2\Phi_3$ & $(x-1)^2(x^2+x+1)$ & Yes \\
\hline
5 & $\Phi_1^2\Phi_4$ & $(x-1)^2(x^2+1)$ & Yes \\
\hline
6 & $\Phi_1^2\Phi_6$ & $(x-1)^2(x^2-x+1)$ & Yes \\
\hline
7 & $\Phi_1\Phi_2^3$ & $(x-1)(x+1)^3$ & No \\
\hline
8 & $\Phi_1\Phi_2\Phi_3$ & $(x-1)(x+1)(x^2+x+1)$ & No \\
\hline
9 & $\Phi_1\Phi_2\Phi_4$ & $(x-1)(x+1)(x^2+1)$ & No \\
\hline
10 & $\Phi_1\Phi_2\Phi_6$ & $(x-1)(x+1)(x^2-x+1)$ & No \\
\hline
11 & $\Phi_2^4$ & $(x+1)^4$ & Yes \\
\hline
12 & $\Phi_2^2\Phi_3$ & $(x+1)^2(x^2+x+1)$ & Yes \\
\hline
13 & $\Phi_2^2\Phi_4$ & $(x+1)^2(x^2+1)$ & Yes \\
\hline
14 & $\Phi_2^2\Phi_6$ & $(x+1)^2(x^2-x+1)$ & Yes \\
\hline
15 & $\Phi_3^2$ & $(x^2+x+1)^2$ & Yes \\
\hline
16 & $\Phi_3\Phi_4$ & $(x^2+x+1)(x^2+1)$ & Yes \\
\hline
17 & $\Phi_3\Phi_6$ & $(x^2+x+1)(x^2-x+1)$ & Yes \\
\hline
18 & $\Phi_4^2$ & $(x^2+1)^2$ & Yes \\
\hline
19 & $\Phi_4\Phi_6$ & $(x^2+1)(x^2-x+1)$ & Yes \\
\hline
20 & $\Phi_5$ & $x^4+x^3+x^2+x+1$ & Yes \\
\hline
21 & $\Phi_6^2$ & $(x^2-x+1)^2$ & Yes \\
\hline
22 & $\Phi_8$ & $x^4+1$ & Yes \\
\hline
23 & $\Phi_{10}$ & $x^4-x^3+x^2-x+1$ & Yes \\
\hline
24 & $\Phi_{12}$ & $x^4-x^2+1$ & Yes \\
\hline
\end{longtable}
\end{center}
}

\section{Traces of Torsion Elements}\label{se:traces}

Let
\[
  \Gamma=\mathrm{SL}_4(\mathbb{Z}),\qquad
  \widetilde{\Gamma}=\mathrm{GL}_4(\mathbb{Z}),
\]
and let \(V\) denote the standard four-dimensional representation.  

Let \(\mathbf T\subseteq \mathrm{GL}_4\) be the diagonal maximal
torus and let \(\mathbf B\subseteq \mathrm{GL}_4\) be the Borel
subgroup of upper-triangular matrices. For \(1\leq i\leq 4\), let
\(\epsilon_i\in X^{*}(\mathbf T)\) be the coordinate character
defined by
\[
\epsilon_i\bigl(\operatorname{diag}(t_1,t_2,t_3,t_4)\bigr)=t_i.
\]
For \(\mathrm{SL}_4\), we use the restrictions of these characters
to \(\mathbf T\cap\mathrm{SL}_4\), and dominance is understood with
respect to \(\mathbf B\cap\mathrm{SL}_4\). Thus the fundamental
dominant weights are
\[
\omega_1=\epsilon_1,\qquad
\omega_2=\epsilon_1+\epsilon_2,\qquad
\omega_3=\epsilon_1+\epsilon_2+\epsilon_3.
\]
For \(\mathrm{GL}_4\), the additional character
\[
\omega_4=\epsilon_1+\epsilon_2+\epsilon_3+\epsilon_4
\]
is the determinant character.

A dominant integral highest weight for \(\mathrm{SL}_4\) will be written
\[
  \lambda=\ell_1\epsilon_1+\ell_2(\epsilon_1+\epsilon_2)
  +\ell_3(\epsilon_1+\epsilon_2+\epsilon_3),
  \qquad \ell_1,\ell_2,\ell_3\in\mathbb{Z}_{\geq 0}.
\]
In partition coordinates this is
\[
  (\lambda_1,\lambda_2,\lambda_3,\lambda_4)
  =(\ell_1+\ell_2+\ell_3,\ell_2+\ell_3,\ell_3,0),
\]
and the corresponding irreducible coefficient system is denoted by
\(\mathcal{M}_\lambda\).  For \(\mathrm{GL}_4\), the same convention
uses \(\ell_1,\ell_2,\ell_3\in\mathbb{Z}_{\geq0}\) and
\(\ell_4\in\mathbb{Z}\), with
\[
  (\lambda_1,\lambda_2,\lambda_3,\lambda_4)
  =(\ell_1+\ell_2+\ell_3+\ell_4,
    \ell_2+\ell_3+\ell_4,\ell_3+\ell_4,\ell_4).
\]

For a torsion element \(T\), put
\[
  H_\lambda(T)
  =\operatorname{Tr}\bigl(T^{-1}\mid\mathcal{M}_\lambda\bigr).
\]
In particular, the standard character formula for symmetric powers~\cite[\S6.1]{FH-book} gives
\begin{equation}\label{eq:hnt1}
  H_m(T)
  =\operatorname{Tr}\bigl(T^{-1}\mid\operatorname{Sym}^m V\bigr)
  =\sum_{\substack{a,b,c,d\geq0\\a+b+c+d=m}}
     \mu_1^a\mu_2^b\mu_3^c\mu_4^d,
\end{equation}
where \(\mu_1,\ldots,\mu_4\) are the eigenvalues of \(T^{-1}\).
Equivalently, using the generating series for complete symmetric
polynomials~\cite[App.~A.1]{FH-book},
\begin{equation}\label{eq:sym-trace-series}
  \sum_{m\geq0}H_m(T)t^m
  =\frac{1}{\det(I-tT^{-1})}.
\end{equation}
For the block representatives used below, $T$ and $T^{-1}$ have the same multiset of eigenvalues. Indeed, the characteristic
polynomial of $T$ is a product of cyclotomic polynomials, and inversion permutes the primitive $d$-th roots of unity occurring
as the roots of each factor $\Phi_d$. Consequently, inversion preserves the eigenvalue multiset, in agreement with the
simplification at the end of Section~\ref{se:basic}.

\subsection{Passage from $\mathrm{GL}_4(\mathbb{Z})$ to $\mathrm{SL}_4(\mathbb{Z})$}\label{ss:passage-GLtoSL}

The compressed formula~\eqref{eq:hnthor} cannot be applied directly to $\Gamma$, because its $\mathrm{SL}_n(\mathbb{Z})$ version was stated
only for odd $n$.  Instead, let $\widetilde{\mathcal{V}}$ be a rational $\widetilde{\Gamma}$-representation whose restriction to $\Gamma$ is $\mathcal{V}$.  Shapiro's lemma and the decomposition
\[\operatorname{Ind}_{\Gamma}^{\widetilde{\Gamma}}\mathbf{1}\simeq \mathbf{1}\oplus\det\]
give
\begin{equation}\label{eq:index-two-sl4}
  \chi_h(\Gamma,\mathcal{V}) =\chi_h(\widetilde{\Gamma},\widetilde{\mathcal{V}}) +\chi_h(\widetilde{\Gamma},\widetilde{\mathcal{V}}\otimes\det).
\end{equation}
Indeed,
\[
  \operatorname{Ind}_\Gamma^{\widetilde{\Gamma}}\mathcal{V}
  \simeq \widetilde{\mathcal{V}}\otimes
         \operatorname{Ind}_\Gamma^{\widetilde{\Gamma}}\mathbf{1},
\]
and the asserted equality~\eqref{eq:index-two-sl4} follows by taking Euler characteristics;
see also~\cite[Chapter~III]{Brown94}.

Applying~\eqref{eq:hnthor} to the two terms on the right-hand side of
\eqref{eq:index-two-sl4} shows that a block type represented by
\(A\in \widetilde{\Gamma}\) is multiplied by
\[
  1+\det(A).
\]
Thus all determinant \(-1\) terms cancel, whereas all determinant-one terms are doubled.  This gives the required even-dimensional passage from $\widetilde{\Gamma}$ to $\Gamma$.

There is a corresponding conjugacy-class interpretation.  If $A\in \Gamma$ and $C_{\widetilde{\Gamma}}(A)$ contains an element of determinant \(-1\),
then its $\widetilde{\Gamma}$-conjugacy class remains a single \(\Gamma\)-conjugacy class
and
\[
  [C_{\widetilde{\Gamma}}(A):C_{\Gamma}(A)]=2,\qquad
  \chi_{\mathrm{orb}}(C_{\Gamma}(A))
  =2\chi_{\mathrm{orb}}(C_{\widetilde{\Gamma}}(A)).
\]
If $C_{\widetilde{\Gamma}}(A)\subseteq \Gamma$, then the $\widetilde{\Gamma}$-conjugacy class splits into two $\Gamma$-conjugacy classes and $C_{\widetilde{\Gamma}}(A)=C_{\Gamma}(A)\).  Among the block types below, precisely  \(K,L,N\) undergo this splitting, and their two families are distinguished by the signs \(+\) and \(-\).

The rows below are the block types occurring in Horozov's compressed resultant formula, rather than a complete list of integral conjugacy classes. Each row records the combined contribution associated with the displayed block type.

There are \(19\) determinant-one characteristic-polynomial types in Section~\ref{se:torsion}.  The four irreducible degree-four types  
\(\Phi_5,\Phi_8,\Phi_{10},\Phi_{12}\) do not occur in the compressed sum, leaving \(15\) block types.  After the splittings of \(K,L,N\), these give \(18\) displayed rows.  The five rows \(A,F,J,M,P\) have zero weight, so \(13\) rows contribute nontrivially.

Put \(C_n=\mathbb{Z}/n\mathbb{Z}\) and
\[
  w=\begin{pmatrix}1&0\\0&-1\end{pmatrix},\qquad
  \Gamma_B
  =\bigl\{(g,h)\in\mathrm{GL}_2(\mathbb{Z})^2:
          \det(g)\det(h)=1\bigr\}.
\]
For each displayed representative \(A\), set
\[
  R_A=\bigl\lvert\operatorname{Res}(f_A)\bigr\rvert,\qquad
  \chi_A=\chi_{\mathrm{orb}}\bigl(C_\Gamma(A)\bigr),\qquad
  W(A)=R_A\chi_A.
\]

\begingroup
\scriptsize
\setlength{\tabcolsep}{3pt}
\renewcommand{\arraystretch}{1.25}
\begin{longtable}{@{}c c c c c c c@{}}
\caption{Block types and weights for
\(\mathrm{SL}_4(\mathbb{Z})\).}
\label{torsion-sl4-weights}\\
\hline
Case & \(A\) & \(f_A\) & \(C_\Gamma(A)\) & \(\chi_A\) & \(R_A\) & \(W(A)\)\\
\hline
\endfirsthead
\multicolumn{7}{c}{\tablename~\thetable\ (continued)}\\
\hline
Case & \(A\) & \(f_A\) & \(C_H(A)\) & \(\chi_A\) & \(R_A\) & \(W(A)\)\\
\hline
\endhead
\(A\) & \(I_4\) & \(\Phi_1^4\) & \(\Gamma\) & \(0\) & \(1\) & \(0\)\\
\(B\) & \(\operatorname{diag}(I_2,-I_2)\)
 & \(\Phi_1^2\Phi_2^2\) & \(\Gamma_B\)
 & \(1/288\) & \(16\) & \(1/18\)\\
\(C\) & \(\operatorname{diag}(I_2,T_3^\top)\)
 & \(\Phi_1^2\Phi_3\) & \(\mathrm{SL}_2(\mathbb{Z})\times C_6\)
 & \(-1/72\) & \(9\) & \(-1/8\)\\
\(D\) & \(\operatorname{diag}(I_2,T_4)\)
 & \(\Phi_1^2\Phi_4\) & \(\mathrm{SL}_2(\mathbb{Z})\times C_4\)
 & \(-1/48\) & \(4\) & \(-1/12\)\\
\(E\) & \(\operatorname{diag}(I_2,T_6^\top)\)
 & \(\Phi_1^2\Phi_6\) & \(\mathrm{SL}_2(\mathbb{Z})\times C_6\)
 & \(-1/72\) & \(1\) & \(-1/72\)\\
\(F\) & \(-I_4\) & \(\Phi_2^4\) & \(\Gamma\)
 & \(0\) & \(1\) & \(0\)\\
\(G\) & \(\operatorname{diag}(-I_2,T_3^\top)\)
 & \(\Phi_2^2\Phi_3\) & \(\mathrm{SL}_2(\mathbb{Z})\times C_6\)
 & \(-1/72\) & \(1\) & \(-1/72\)\\
\(H\) & \(\operatorname{diag}(-I_2,T_4)\)
 & \(\Phi_2^2\Phi_4\) & \(\mathrm{SL}_2(\mathbb{Z})\times C_4\)
 & \(-1/48\) & \(4\) & \(-1/12\)\\
\(I\) & \(\operatorname{diag}(-I_2,T_6^\top)\)
 & \(\Phi_2^2\Phi_6\) & \(\mathrm{SL}_2(\mathbb{Z})\times C_6\)
 & \(-1/72\) & \(9\) & \(-1/8\)\\
\(J\) & \(\operatorname{diag}(T_3^\top,T_3^\top)\)
 & \(\Phi_3^2\) & \(\mathrm{GL}_2(\mathbb{Z}[\zeta_3])\)
 & \(0\) & \(1\) & \(0\)\\
\(K_+\) & \(\operatorname{diag}(T_3^\top,T_4)\)
 & \(\Phi_3\Phi_4\) & \(C_6\times C_4\)
 & \(1/24\) & \(1\) & \(1/24\)\\
\(K_-\) & \(\operatorname{diag}(T_3^\top,-T_4)\)
 & \(\Phi_3\Phi_4\) & \(C_6\times C_4\)
 & \(1/24\) & \(1\) & \(1/24\)\\
\(L_+\) & \(\operatorname{diag}(T_3^\top,T_6^\top)\)
 & \(\Phi_3\Phi_6\) & \(C_6\times C_6\)
 & \(1/36\) & \(4\) & \(1/9\)\\
\(L_-\) & \(\operatorname{diag}(T_3^\top,wT_6^\top w^{-1})\)
 & \(\Phi_3\Phi_6\) & \(C_6\times C_6\)
 & \(1/36\) & \(4\) & \(1/9\)\\
\(M\) & \(\operatorname{diag}(T_4,T_4)\)
 & \(\Phi_4^2\) & \(\mathrm{GL}_2(\mathbb{Z}[\zeta_4])\)
 & \(0\) & \(1\) & \(0\)\\
\(N_+\) & \(\operatorname{diag}(T_4,T_6^\top)\)
 & \(\Phi_4\Phi_6\) & \(C_4\times C_6\)
 & \(1/24\) & \(1\) & \(1/24\)\\
\(N_-\) & \(\operatorname{diag}(T_4,wT_6^\top w^{-1})\)
 & \(\Phi_4\Phi_6\) & \(C_4\times C_6\)
 & \(1/24\) & \(1\) & \(1/24\)\\
\(P\) & \(\operatorname{diag}(T_6^\top,T_6^\top)\)
 & \(\Phi_6^2\) & \(\mathrm{GL}_2(\mathbb{Z}[\zeta_3])\)
 & \(0\) & \(1\) & \(0\)\\
\hline
\end{longtable}
\endgroup

For each case $X$ in Table~\ref{torsion-sl4-weights}, let $T_X$ denote the displayed representative in the second column. When no confusion can arise,
we abbreviate $W(T_X)$, $H_m(T_X)$, and $H_\lambda(T_X)$ by $W(X)$, $H_m(X)$, and $H_\lambda(X)$, respectively. The corresponding symmetric-power traces are recorded separately in Table~\ref{torsion-sl4-traces}. Equalities such as
\[
H_m(T_G)=(-1)^mH_m(T_E)
\]
make the pairing of these rows transparent. 

\subsection{Symmetric-power traces}
\begingroup
\small
\setlength{\tabcolsep}{5pt}
\renewcommand{\arraystretch}{1.35}
\begin{longtable}{@{}c p{0.82\textwidth}@{}}
\caption{Symmetric-power traces for $\mathrm{SL}_4(\mathbb{Z})$ block representatives.}
\label{torsion-sl4-traces}\\
\hline
Case & \(H_m(T)=\operatorname{Tr}
  (T^{-1}\mid\operatorname{Sym}^mV)\)\\
\hline
\endfirsthead
\multicolumn{2}{c}{\tablename~\thetable\ (continued)}\\
\hline
Case & \(H_m(T)=\operatorname{Tr}
  (T^{-1}\mid\operatorname{Sym}^mV)\)\\
\hline
\endhead
\(A\) & \(\displaystyle \binom{m+3}{3}\)\\
\(B\) & \(\displaystyle
  \begin{cases}
    m/2+1,&m\equiv0\pmod2,\\
    0,&m\equiv1\pmod2;
  \end{cases}\)\\
\(C\) & \(\displaystyle \lfloor m/3\rfloor+1\)\\
\(D\) & \(\displaystyle
  \begin{cases}
    m/2+1,&m\equiv0\pmod2,\\
    2+2\lfloor m/4\rfloor,&m\equiv1\pmod2;
  \end{cases}\)\\
\(E\) & \(\displaystyle
  \begin{cases}
    m+1,&m\equiv0,5\pmod6,\\
    m+2,&m\equiv1,4\pmod6,\\
    m+3,&m\equiv2,3\pmod6;
  \end{cases}\)\\
\(F\) & \(\displaystyle (-1)^m\binom{m+3}{3}\)\\
\(G\) & \(\displaystyle (-1)^mH_m(T_E)\)\\
\(H\) & \(\displaystyle (-1)^mH_m(T_D)\)\\
\(I\) & \(\displaystyle (-1)^mH_m(T_C)\)\\
\(J\) & \(\displaystyle
  \begin{cases}
    m/3+1,&m\equiv0\pmod3,\\
    -2(m+2)/3,&m\equiv1\pmod3,\\
    (m+1)/3,&m\equiv2\pmod3;
  \end{cases}\)\\
\(K_\pm\) & \(\displaystyle
  \begin{cases}
    1,&m\equiv0,6,7\pmod{12},\\
    -1,&m\equiv1,2,8\pmod{12},\\
    0,&m\equiv4,9,10,11\pmod{12},\\
    2,&m\equiv3\pmod{12},\\
    -2,&m\equiv5\pmod{12};
  \end{cases}\)\\
\(L_\pm\) & \(\displaystyle
  \begin{cases}
    1,&m\equiv0\pmod6,\\
    -1,&m\equiv2\pmod6,\\
    0,&m\equiv1,3,4,5\pmod6;
  \end{cases}\)\\
\(M\) & \(\displaystyle
  \begin{cases}
    (-1)^{m/2}(m/2+1),&m\equiv0\pmod2,\\
    0,&m\equiv1\pmod2;
  \end{cases}\)\\
\(N_\pm\) & \(\displaystyle (-1)^mH_m(T_{K_\pm})\)\\
\(P\) & \(\displaystyle
  \begin{cases}
    (-1)^{m/3}(m/3+1),&m\equiv0\pmod3,\\
    (-1)^{(m-1)/3}\,2(m+2)/3,&m\equiv1\pmod3,\\
    (-1)^{(m-2)/3}\,(m+1)/3,&m\equiv2\pmod3.
  \end{cases}\)\\
\hline
\end{longtable}
\endgroup

Let
\[
  \mathcal{S}
  =\{B,C,D,E,G,H,I,K_+,K_-,L_+,L_-,N_+,N_-\}.
\]
Equations~\eqref{eq:index-two-sl4} and~\eqref{eq:hnthor}, together with
Table~\ref{torsion-sl4-weights}, now give
\begin{equation}\label{eq:sl4-sym-weighted-sum}
  \chi_h\bigl(\Gamma,\operatorname{Sym}^mV\bigr)
  =\sum_{A\in\mathcal{S}}W(A)H_m(A).
\end{equation}

\begin{prop}\label{prop-Sym-SL4}
Let \(V\) be the standard four dimensional representation of
\(\Gamma=\mathrm{SL}_4(\mathbb{Z})\).  Then, for every $m\in\mathbb Z_{\geq 0}$,
\begin{equation}\label{chisl4}
  \chi_h\bigl(\Gamma,\operatorname{Sym}^mV\bigr)
  =
  \begin{cases}
    -m/6,&m\equiv0\pmod6,\\
    -m/6-2/3,&m\equiv2\pmod6,\\
    -m/6-1/3,&m\equiv4\pmod6,\\
    0,&m\text{ is odd}.
  \end{cases}
\end{equation}
\end{prop}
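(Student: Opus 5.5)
The plan is to evaluate the weighted sum \eqref{eq:sl4-sym-weighted-sum} directly, using the weights $W(A)$ from Table~\ref{torsion-sl4-weights} and the traces $H_m(A)$ from Table~\ref{torsion-sl4-traces}. All traces are quasi-polynomials in $m$ with period dividing $12$. So it suffices to check the identity on each residue class of $m$ modulo $12$, where every term is an explicit polynomial of degree at most one in $m$.

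The first step is to organize the sum into pairs using the relations recorded in Table~\ref{torsion-sl4-traces}. These are $H_m(G)=(-1)^mH_m(E)$, $H_m(H)=(-1)^mH_m(D)$, $H_m(I)=(-1)^mH_m(C)$ and $H_m(N_\pm)=(-1)^mH_m(K_\pm)$. The weights match accordingly: $W(G)=W(E)$, $W(H)=W(D)$, $W(I)=W(C)$ and $W(N_\pm)=W(K_\pm)$. Hence
\begin{align*}
\chi_h(\Gamma,\operatorname{Sym}^mV)&=W(B)H_m(B)+\bigl(1+(-1)^m\bigr)\Bigl[W(C)H_m(C)+W(D)H_m(D)+W(E)H_m(E)\\
&\qquad+2W(K)H_m(K)\Bigr]+2W(L)H_m(L).
\end{align*}
For odd $m$, the bracketed term vanishes and $H_m(B)=0$. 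The only remaining term is $2W(L)H_m(L)$, and $H_m(L)=0$ for odd $m$ by the table. This gives $\chi_h=0$ in the odd case immediately.

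For even $m$, write $m\equiv r\pmod{12}$ with $r\in\{0,2,4,6,8,10\}$. The sum becomes
\[
\tfrac1{18}\bigl(\tfrac m2+1\bigr)+2\Bigl[-\tfrac18\bigl(\lfloor\tfrac m3\rfloor+1\bigr)-\tfrac1{12}\bigl(\tfrac m2+1\bigr)-\tfrac1{72}H_m(E)+\tfrac1{12}H_m(K)\Bigr]+\tfrac29H_m(L).
\]
Here $\lfloor m/3\rfloor$ depends on $m\bmod 6$, and $H_m(E)$ is $m+1$, $m+3$ or $m+2$ for $m\equiv 0,2,4\pmod 6$. The value $H_m(K)$ is $1,-1,0,1,-1,0$ for $r=0,2,4,6,8,10$, so it depends only on $m\bmod 6$. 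Likewise $H_m(L)$ is $1,-1,0$ for $m\equiv0,2,4\pmod6$. The twelve classes therefore collapse to three classes modulo $6$.

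In each of these three classes, I collect the coefficient of $m$ and the constant term. For the linear coefficient, the contributions are $\tfrac1{36}$ from $B$, $-\tfrac1{12}$ from $C$, $-\tfrac1{12}$ from $D$, and $-\tfrac1{36}$ from $E$. Their sum is $-\tfrac16$, as required. For $m\equiv0\pmod6$ the constant terms are
\[
\tfrac1{18}-\tfrac14-\tfrac16-\tfrac1{36}+\tfrac16+\tfrac29=0.
\]
The classes $m\equiv2$ and $m\equiv4\pmod6$ are handled the same way, using $\lfloor m/3\rfloor=(m-2)/3$ and $(m-1)/3$ respectively. They should yield $-\tfrac23$ and $-\tfrac13$.

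The main obstacle is not conceptual. It is making sure the weights and trace entries are used consistently, in particular that the doubling factor $1+\det(A)$ from \eqref{eq:index-two-sl4} is already absorbed into the $W(A)$ of Table~\ref{torsion-sl4-weights}, and not applied a second time. As a sanity check I would test small cases such as $m=0$, where the formula must give $0$, consistent with $\chi_{\mathrm{orb}}$ and the vanishing of $H^*(\mathrm{SL}_4(\mathbb Z),\mathbb Q)$ in the relevant degrees up to Euler characteristic. I would also test $m=2$ against an independent computation, before trusting the residue-class bookkeeping.
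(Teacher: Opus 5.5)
Your proof is correct. For even $m$ it is the same computation the paper carries out: write $m=6k+r$ with $r\in\{0,2,4\}$ and substitute Tables~\ref{torsion-sl4-weights} and~\ref{torsion-sl4-traces} into \eqref{eq:sl4-sym-weighted-sum}. The two constants you left as ``should yield'' do check out: $\frac1{18}-\frac1{12}-\frac16-\frac1{12}-\frac16-\frac29=-\frac23$ for $m\equiv2\pmod6$, and $\frac1{18}-\frac16-\frac16-\frac1{18}=-\frac13$ for $m\equiv4\pmod6$. Your concern about the factor $1+\det(A)$ does not arise, because Table~\ref{torsion-sl4-weights} already includes it. For example, $W(B)=\frac1{18}=2\cdot\frac1{36}$, and each of $K,L,N$ appears as two rows, each carrying the $\mathrm{GL}_4$ weight. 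One small point about your sanity check: $\chi_h(\Gamma,\mathbb Q)=0$ follows from $H^\ast(\Gamma,\mathbb Q)$ being $\mathbb Q$ in degrees $0$ and $3$. It does not follow from $\chi_{\mathrm{orb}}(\Gamma)=0$, which is only the identity term of Wall's formula.

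The genuine difference is the odd case. The paper does not use the tables there. It observes that $-I_4$ acts by $-1$ on $\operatorname{Sym}^mV$, so $H^q(\langle -I_4\rangle,\operatorname{Sym}^mV)=0$ for all $q$. The Lyndon--Hochschild--Serre spectral sequence then kills every $H^n(\Gamma,\operatorname{Sym}^mV)$. You instead obtain $\chi_h=0$ inside Wall's formula. You pair $T$ with $-T$, which have equal weights and traces differing by $(-1)^m$, and use $H_m(B)=H_m(L_\pm)=0$ for odd $m$.

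Your route is uniform with the even case. It is also exactly the cancellation mechanism the paper needs later in Proposition~\ref{prop:sl4-parity-vanishing} for the all-odd pattern, where the center acts trivially and no central-character argument is available. However, it depends on the correctness of every table entry and yields only the alternating sum. The paper's argument is table-free and gives degreewise vanishing. That stronger statement is what is reused in Proposition~\ref{prop:central-degreewise-vanishing} and throughout Section~\ref{se:cohomological-consequences}.
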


\begin{proof}
Suppose first that \(m\) is odd. Write
\[
Z=\langle -I_4\rangle,\qquad
\overline{\Gamma}=\Gamma/Z.
\]
The Lyndon--Hochschild--Serre spectral sequence associated with
\[
1\longrightarrow Z\longrightarrow\Gamma
 \longrightarrow\overline{\Gamma}\longrightarrow1
\]
has
\[
E_2^{p,q}
 =
H^p\!\left(\overline{\Gamma},
H^q\!\left(Z,\operatorname{Sym}^m V\right)\right)
\Longrightarrow
H^{p+q}\!\left(\Gamma,\operatorname{Sym}^m V\right);
\]
see~\cite[Chapter~VII]{Brown94}.

Since \(m\) is odd, the element \(-I_4\) acts on
\(\operatorname{Sym}^m V\) as \(-1\). Hence
\[
H^0\!\left(Z,\operatorname{Sym}^m V\right)
 =
\left(\operatorname{Sym}^m V\right)^Z
 =0.
\]
Moreover, \(Z\) is finite of order \(2\), and the coefficient module is
a rational vector space. The invariants functor is therefore exact,
by averaging over \(Z\), and consequently
\[
H^q\!\left(Z,\operatorname{Sym}^m V\right)=0
\qquad(q>0).
\]
Thus, every term \(E_2^{p,q}\) of this particular spectral sequence is zero and it follows that
\[
H^n\left(\Gamma, \operatorname{Sym}^m V\right)=0 \qquad(n\geq0),
\]
and hence
\[
\chi_h\left(\Gamma, \operatorname{Sym}^m V\right)=0.
\]

Now let \(m=6k+r\), where \(r\in\{0,2,4\}\).  Substitution of the
weights in Table~\ref{torsion-sl4-weights} and the traces in
Table~\ref{torsion-sl4-traces} into
\eqref{eq:sl4-sym-weighted-sum} gives
\[
\begin{array}{c|ccc}
r&0&2&4\\ \hline
\displaystyle\sum_{A\in\mathcal{S}}W(A)H_{6k+r}(A)
 &-k&-k-1&-k-1.
\end{array}
\]
For example, when \(r=0\), the left-hand side is
\[
 \frac{3k+1}{18}-\frac{2k+1}{8} -\frac{3k+1}{12}-\frac{6k+1}{72}-\frac{6k+1}{72}-\frac{3k+1}{12}-\frac{2k+1}{8} +\frac{2}{24}+\frac{2}{9}+\frac{2}{24} =-k.
\]

Replacing \(k\) by \((m-r)/6\) in the three cases gives precisely~\eqref{chisl4}.
\end{proof}

The equivalent generating series is
\begin{equation}\label{gensl4}
\sum_{m\geq0}\chi_h\bigl(\Gamma,\operatorname{Sym}^mV\bigr)t^m = -\frac{t^2+t^4}{1-t^4-t^6+t^{10}}.
\end{equation}

\subsection{Traces for arbitrary highest weights}
Finally, the traces for arbitrary $\mathrm{SL}_4$-highest weights are obtained from
the complete symmetric traces \(H_m(T)\).  With
\[
  (\lambda_1,\lambda_2,\lambda_3,\lambda_4)
  =(\ell_1+\ell_2+\ell_3,\ell_2+\ell_3,\ell_3,0),
\]
the Jacobi--Trudi identity~\cite[Theorem~7.16.1]{St-book} gives
\begin{equation}\label{eqn-Jacobi-Trudi}
  H_\lambda(T)
  =\det\begin{pmatrix}
    H_{\lambda_1}(T)&H_{\lambda_1+1}(T)&H_{\lambda_1+2}(T)\\
    H_{\lambda_2-1}(T)&H_{\lambda_2}(T)&H_{\lambda_2+1}(T)\\
    H_{\lambda_3-2}(T)&H_{\lambda_3-1}(T)&H_{\lambda_3}(T)
  \end{pmatrix},
\end{equation}
where \(H_0(T)=1\) and \(H_j(T)=0\) for \(j<0\). Since $\lambda_4=0$, the displayed $3\times 3$ determinant is obtained from the usual $4\times 4$ Jacobi--Trudi determinant by expanding along its last row. Thus~\eqref{eqn-Jacobi-Trudi}, together with the complete symmetric traces $H_m(T)$, determines the trace
of every irreducible $\mathrm{SL}_4$ highest-weight coefficient
system on each torsion representative.

\section{Euler Characteristics of \texorpdfstring{$\mathrm{SL}_4(\mathbb{Z})$} {SL4(Z)}}\label{se:Euler-Sl4}

For
\[
  \lambda=\ell_1\epsilon_1
  +\ell_2(\epsilon_1+\epsilon_2)
  +\ell_3(\epsilon_1+\epsilon_2+\epsilon_3),
  \qquad \ell_1,\ell_2,\ell_3\in\mathbb{Z}_{\geq0},
\]
write
\[
  \chi_{(\ell_1,\ell_2,\ell_3)}
  =\chi_h\bigl(\mathrm{SL}_4(\mathbb{Z}),\mathcal{M}_\lambda\bigr).
\]
We use the residue notation
\begin{equation}\label{eq:sl4-residue-notation}
  \ell_i=12m_i+r_i,
  \qquad m_i\in\mathbb{Z}_{\geq0},\quad 0\leq r_i<12.
\end{equation}
The formulas below are indexed by the residue triple
$(r_1,r_2,r_3)$ modulo $12$, not merely by the eight possible parity
patterns.

\begin{prop}[Parity vanishing]\label{prop:sl4-parity-vanishing}
One has $\chi_{(\ell_1,\ell_2,\ell_3)}=0$
whenever
\[
  \ell_1\not\equiv\ell_3\pmod2 \qquad\text{or}\qquad \ell_1\equiv\ell_2\equiv\ell_3\equiv1\pmod2.
\]
Consequently, a nonzero value is possible only when
\[
  (\ell_1,\ell_3)\equiv(0,0)\pmod2,
\]
with arbitrary $\ell_2$, or when
\[
  (\ell_1,\ell_2,\ell_3)\equiv(1,0,1)\pmod2.
\]
\end{prop}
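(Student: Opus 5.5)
The statement has two parts, and I would prove them by different methods. For the first part I would reuse the central-character argument from the proof of Proposition~\ref{prop-Sym-SL4}. The central element $-I_4\in\Gamma$ acts on $\mathcal{M}_\lambda$ by the scalar $(-1)^{|\lambda|}$, where
\[
|\lambda|=\lambda_1+\lambda_2+\lambda_3=\ell_1+2\ell_2+3\ell_3\equiv \ell_1+\ell_3\pmod 2 .
\]
So if $\ell_1\not\equiv\ell_3\pmod 2$, the subgroup $Z=\langle -I_4\rangle$ acts by $-1$. Then $H^q(Z,\mathcal{M}_\lambda)=0$ for all $q\ge 0$, by averaging over the finite group $Z$ with rational coefficients. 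The Lyndon--Hochschild--Serre spectral sequence for $1\to Z\to\Gamma\to\Gamma/Z\to 1$ then has $E_2=0$. Hence $H^\ast(\Gamma,\mathcal{M}_\lambda)=0$, and in particular $\chi_{(\ell_1,\ell_2,\ell_3)}=0$. This part is purely structural and needs no trace computation.

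In the all-odd case $|\lambda|$ is even, so the centre acts trivially and the argument above gives nothing. Here I would work directly with Wall's formula in the form
\[
\chi_{(\ell_1,\ell_2,\ell_3)}=\sum_{A\in\mathcal{S}}W(A)\,H_\lambda(A),
\]
which follows from \eqref{eq:index-two-sl4}, \eqref{eq:hnthor} and Table~\ref{torsion-sl4-weights} exactly as \eqref{eq:sl4-sym-weighted-sum} did. I would first exploit the pairings $G\leftrightarrow E$, $H\leftrightarrow D$, $I\leftrightarrow C$, $N_\pm\leftrightarrow K_\pm$. In each pair one representative is $-1$ times the other up to conjugacy, because $-I_2$ and $T_6^\top$ are $-1$ times $I_2$ and $T_3^\top$. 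Since $H_\lambda(-T)=(-1)^{|\lambda|}H_\lambda(T)=H_\lambda(T)$ here, the paired traces agree. This reduces the sum to the classes $B,C,D,E,K_\pm,L_\pm$ with combined weights $W(C)+W(I)=-1/4$, $W(D)+W(H)=-1/6$, $W(E)+W(G)=-1/36$, $2/24$ for each $K_\pm$, and $1/9$ for each $L_\pm$.

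For each remaining class I would compute $H_\lambda(A)$ directly as a Schur polynomial evaluated at the eigenvalues, rather than through the $3\times 3$ Jacobi--Trudi determinant. For block types containing $I_2$ or $-I_2$ the eigenvalues repeat, so I would use the Weyl character formula in its limiting (confluent) form. I would aim to show that each trace is a quasi-polynomial in $(\ell_1,\ell_2,\ell_3)$ of period dividing $12$ and degree at most $2$. On a fixed odd residue class modulo $12$, the weighted sum is then a polynomial of degree at most $2$ in $(m_1,m_2,m_3)$. It therefore suffices to check vanishing at a finite interpolating set, for instance all $m_i\in\{0,1,2\}$, for each of the $6^3$ odd residue triples. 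This reduces the second claim to a finite verification. The two ``Consequently'' statements are then just the complement of the vanishing conditions among the eight parity patterns.

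The main obstacle is this all-odd case. I do not see a conceptual vanishing argument: neither the centre nor the duality $\lambda\mapsto(\ell_3,\ell_2,\ell_1)$ induced by inverse-transpose forces $\chi=0$. I would first try to find a cancellation mechanism, for example $H_\lambda(B)$, $H_\lambda(D)$ and $H_\lambda(E)$ combining into a multiple of an expression that vanishes for odd $\ell_1,\ell_3$, while the torus-type contributions of $K_\pm$ and $L_\pm$ cancel among themselves. If no such mechanism appears, I would fall back on the finite interpolation check described above, organized by residues modulo $12$.
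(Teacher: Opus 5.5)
Your argument is correct, and it takes a different route from the paper's in the all-odd case. Your first half (central character plus Lyndon--Hochschild--Serre) is the paper's argument, and so is your pairing $C\leftrightarrow I$, $D\leftrightarrow H$, $E\leftrightarrow G$, $K_\pm\leftrightarrow N_\pm$.

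The paper does not interpolate. For $(\lambda_1,\lambda_2,\lambda_3)\equiv(1,0,1)\pmod 2$, the first and third columns of the Jacobi--Trudi matrix \eqref{eqn-Jacobi-Trudi} consist of odd-index complete symmetric traces. These vanish for $B$ and $L_\pm$, so those rows drop out. The paper then extracts from the four trace series the single identity \eqref{eq:all-odd-trace-identity} among $H_\lambda(C),H_\lambda(D),H_\lambda(E),H_\lambda(K_\pm)$, which is exactly the vanishing of the paired weighted sum.

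Your route is sound: degree $\le 2$ and period dividing $12$ from the confluent Weyl formula, then a check on $\{0,1,2\}^3$ for each of the $216$ odd residue triples. The confluent formula even gives the degree bound more directly than the Jacobi--Trudi expansion, whose six terms are a priori cubic. The cost is that all the content sits in a large unexecuted computer check, and nothing explains the cancellation.

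The mechanism you were looking for does exist and is short. In the all-odd pattern every entry of $\lambda+\rho=(\lambda_1+3,\lambda_2+2,\lambda_3+1,0)$ is even. So in the (confluent) Weyl numerator built from the exponents $a_j=\lambda_j+4-j$, the row $\bigl(\mu^{a_j}\bigr)_j$ equals the row $\bigl((-\mu)^{a_j}\bigr)_j$. The numerator therefore vanishes whenever the eigenvalue multiset contains both $\mu$ and $-\mu$. This kills $B,D,H,K_\pm,N_\pm,L_\pm$ outright, so in fact \eqref{eq:all-odd-trace-identity} reduces to $9H_\lambda(C)+H_\lambda(E)=0$.

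For $C$ versus $E$, the eigenvalues are $1,1,\omega,\bar\omega$ and $1,1,-\bar\omega,-\omega$. The numerators differ by a row swap, hence by a sign. The confluent Vandermonde denominators $(1-x_3)^2(1-x_4)^2(x_3-x_4)$ are in ratio $|1-\omega|^4:|1-\zeta_6|^4=9:1$, since $\omega-\bar\omega=\zeta_6-\bar\zeta_6$. Hence $H_\lambda(E)=-9H_\lambda(C)$, which cancels exactly against the resultant factors $9$ and $1$ in $W(C)$ and $W(E)$. Your tentative guess that $K_\pm$ and $L_\pm$ cancel among themselves is not what happens: both vanish individually.
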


\begin{proof}
The central element $-I_4$ acts on $\mathcal{M}_\lambda$ by
\[
  (-1)^{\lambda_1+\lambda_2+\lambda_3}
  =(-1)^{\ell_1+2\ell_2+3\ell_3}
  =(-1)^{\ell_1+\ell_3}.
\]
If $\ell_1\not\equiv\ell_3\pmod2$, this action is nontrivial. More precisely, applying the argument from the proof of Proposition~\ref{prop-Sym-SL4}, with \(\mathcal M_\lambda\) in place of \(\operatorname{Sym}^m V\), gives $H^n\left(\Gamma,\mathcal M_\lambda\right)=0\quad(n\geq0).$
Therefore $\chi_h\left(\Gamma,\mathcal M_\lambda\right)=0.$ It remains to treat the all-odd parity pattern.  Write $|\lambda|=\lambda_1+\lambda_2+\lambda_3$.  If two block representatives satisfy $T'=-T$, then
\[
  H_m(T')=(-1)^mH_m(T)\quad\Longrightarrow\quad H_\lambda(T')=(-1)^{|\lambda|}H_\lambda(T).
\]
For odd $\ell_1,\ell_2,\ell_3$, the partition coordinates have parity $(\lambda_1,\lambda_2,\lambda_3)\equiv(1,0,1)\pmod2$ and $|\lambda|$ is even.  The $B$ and $L_\pm$ determinants vanish because their odd complete-symmetric traces vanish.  For the remaining rows, substitution of the four trace series
\[
\begin{aligned}
 \sum_{m\geq0}H_m(C)t^m
   &=\frac1{(1-t)^2(1+t+t^2)},\\
 \sum_{m\geq0}H_m(D)t^m
   &=\frac1{(1-t)^2(1+t^2)},\\
 \sum_{m\geq0}H_m(E)t^m
   &=\frac1{(1-t)^2(1-t+t^2)},\\
 \sum_{m\geq0}H_m(K_\pm)t^m
   &=\frac1{(1+t+t^2)(1+t^2)}
\end{aligned}
\]
into the Jacobi--Trudi determinant gives the identity
\begin{equation}\label{eq:all-odd-trace-identity}
  6H_\lambda(K_\pm) =9H_\lambda(C)+6H_\lambda(D)+H_\lambda(E).
\end{equation}
The paired rows $C,I$, $D,H$, $E,G$, and $K_\pm,N_\pm$ have equal
traces in this parity pattern.  Their total weighted contribution is
therefore
\[
  -\frac14H_\lambda(C)-\frac16H_\lambda(D) -\frac1{36}H_\lambda(E)+\frac16H_\lambda(K_\pm),
\]
which is zero by~\eqref{eq:all-odd-trace-identity}.  This proves the second vanishing assertion.
\end{proof}

\begin{thm}[Residue-class formulas]\label{thm:sl4-residue-formulas}
For every residue triple allowed by Proposition~\ref{prop:sl4-parity-vanishing}, there is a polynomial
\[
  Q_{r_1,r_2,r_3}(m_1,m_2,m_3)\in\mathbb{Q}[m_1,m_2,m_3]
\]
of total degree at most two such that
\[
  \chi_{(\ell_1,\ell_2,\ell_3)}=Q_{r_1,r_2,r_3}(m_1,m_2,m_3)
\]
under~\eqref{eq:sl4-residue-notation}.  There are $432$ potentially nonzero even--arbitrary--even residue triples and $216$ potentially
nonzero odd--even--odd residue triples, for a total of $648$.  The complete formulas are recorded in Appendix~\ref{app:sl4-residue-tables}.
\end{thm}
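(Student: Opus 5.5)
By \eqref{eq:index-two-sl4}, \eqref{eq:hnthor} and Table~\ref{torsion-sl4-weights}, the analogue of \eqref{eq:sl4-sym-weighted-sum} holds for an arbitrary highest weight:
\[
  \chi_{(\ell_1,\ell_2,\ell_3)}=\sum_{A\in\mathcal S}W(A)\,H_\lambda(A).
\]
So the proof reduces to showing that each $H_\lambda(A)$, $A\in\mathcal S$, is a quasi-polynomial in $(\ell_1,\ell_2,\ell_3)$ of period dividing $12$ and total degree at most two on each residue class. The plan is to get this from the Jacobi--Trudi determinant \eqref{eqn-Jacobi-Trudi}, using the structure of the complete symmetric traces in Table~\ref{torsion-sl4-traces}.

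\textbf{Structure of the traces.} For each $A\in\mathcal S$, the generating series \eqref{eq:sym-trace-series} is $1/\det(I-tA^{-1})$. Its denominator is a product of cyclotomic factors in which the factor $(1\mp t)$ occurs with multiplicity at most two. Indeed, the multiplicity-two factors come only from the blocks $\pm I_2$ in rows $B,C,D,E,G,H,I$, while $K_\pm,L_\pm,N_\pm$ have no eigenvalue $\pm1$ at all. Consequently, for every $A\in\mathcal S$:
\begin{itemize}
\item $H_m(A)$ is a quasi-polynomial in $m$ of degree at most one;
\item its period divides $12$, since all eigenvalue orders lie in $\{1,2,3,4,6\}$.
\end{itemize}
The entries of \eqref{eqn-Jacobi-Trudi} are $H_{\lambda_i-i+j}$, with $\lambda_1=\ell_1+\ell_2+\ell_3$, $\lambda_2=\ell_2+\ell_3$, $\lambda_3=\ell_3$. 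These are therefore quasi-polynomials of degree at most one in the $\ell_i$, and on each residue class mod $12$ they are affine in $(m_1,m_2,m_3)$.

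\textbf{The degree bound.} A naive expansion of the $3\times3$ determinant gives degree up to three, so a better argument is needed. I would use the eigenvalue structure. For rows $B,\dots,I$, the eigenvalues of $A$ are $\{\varepsilon,\varepsilon,\zeta,\bar\zeta\}$ with $\varepsilon=\pm1$. On the semisimple element, the character of $\mathcal M_\lambda$ is a limit of the Weyl character formula. Concretely, I would write it as a branching sum: restrict $\mathcal M_\lambda$ to the Levi subgroup $\mathrm{GL}_2\times\mathrm{GL}_2$ adapted to the block decomposition. Then
\[
  H_\lambda(A)=\sum_{\mu,\nu}c^\lambda_{\mu\nu}\,s_\mu(\varepsilon,\varepsilon)\,s_\nu(\zeta,\bar\zeta),
\]
where $c^\lambda_{\mu\nu}$ are Littlewood--Richardson coefficients. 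The factor $s_\mu(\varepsilon,\varepsilon)=\pm(\mu_1-\mu_2+1)$ is linear. The factor $s_\nu(\zeta,\bar\zeta)$ is periodic and bounded. The Littlewood--Richardson sum over $\nu$ is cut down by this periodicity into sums over lattice points in polytopes of dimension at most one in the relevant direction.

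An equivalent and more mechanical route uses the Weyl character formula. The relevant degree is governed by the number of positive roots vanishing on the eigenvalue pattern, namely the dimension of the centralizer's root system. That root system is $A_1$ for these rows, so each singular limit contributes degree at most $1+1=2$. For $K_\pm,L_\pm,N_\pm$, the eigenvalues are distinct, no root vanishes, and $H_\lambda$ is purely periodic of degree $0$. So each $H_\lambda(A)$ has degree at most two on every residue class, and the weighted sum inherits this bound. This establishes the existence of $Q_{r_1,r_2,r_3}$.

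\textbf{Counting and explicit formulas.} The count follows from Proposition~\ref{prop:sl4-parity-vanishing}. Even $r_1,r_3$ with arbitrary $r_2$ give $6\cdot12\cdot6=432$ residue triples. Odd $r_1,r_3$ with even $r_2$ give $6\cdot6\cdot6=216$. This totals $648$. The explicit polynomials in Appendix~\ref{app:sl4-residue-tables} are obtained by a routine procedure:
\begin{itemize}
\item substitute the residue-class forms of Table~\ref{torsion-sl4-traces} into \eqref{eqn-Jacobi-Trudi};
\item use the trace series displayed in the proof of Proposition~\ref{prop:sl4-parity-vanishing} (and their analogues for $B$ and $L_\pm$);
\item expand symbolically in $m_1,m_2,m_3$;
\item collect terms with the weights $W(A)$.
\end{itemize}
Interpolation is an equally good way to do this: once degree at most two is known, each $Q$ is determined by its values at $10$ points. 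I would cross-check the results against Proposition~\ref{prop-Sym-SL4} on the line $\ell_2=\ell_3=0$.

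The main obstacle is the degree-two claim, because the naive determinant bound gives three and cancellation must be exhibited. If the Weyl-character argument proves awkward to make rigorous, I would fall back on direct symbolic expansion on each residue class and verify that the cubic coefficients cancel.
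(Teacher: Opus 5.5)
Your reduction to $\chi_{(\ell_1,\ell_2,\ell_3)}=\sum_{A\in\mathcal S}W(A)H_\lambda(A)$ matches the paper's argument. So do the observation that each $H_m(A)$ is affine on residue classes modulo $12$, the substitution into \eqref{eqn-Jacobi-Trudi}, and the count $6\cdot12\cdot6+6\cdot6\cdot6=648$. The paper obtains ``degree at most two'' only as the output of the finite symbolic simplification, which is your fallback.

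Your new ingredient is the structural degree bound, and as written it is miscounted.
\begin{itemize}
\item Row $B$ has eigenvalues $1,1,-1,-1$, so its centralizer has root system $A_1\times A_1$, not $A_1$. Accordingly $s_\nu(-1,-1)=(-1)^{|\nu|}(\nu_1-\nu_2+1)$ is unbounded, and your branching sketch does not cover $B$. Its ``cut down by periodicity'' step is heuristic in any case.
\item For $C,\dots,I$ the root system is indeed $A_1$, but then the bound is $1$, not $1+1$. In the confluent limit of the bialternant formula, each positive root $\alpha$ with $\alpha(A)=1$ accounts for exactly one application of $x\,d/dx$, hence one factor linear in $\lambda$.
\end{itemize}

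With the count corrected, the argument is sound. $H_\lambda(A)$ is a sum of terms of the form (a root of unity depending only on $\lambda$ modulo the order of $A$) times (a polynomial in $\lambda$ of degree at most $d_A=\#\{\alpha>0:\alpha(A)=1\}$). Here $d_B=2$, $d_C=\dots=d_I=1$, and $d_{K_\pm}=d_{L_\pm}=d_{N_\pm}=0$. All orders divide $12$ and $\lambda$ depends integrally and linearly on $\ell$, so each $Q_{r_1,r_2,r_3}$ has degree at most two.

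Compared with the paper, this buys three things.
\begin{itemize}
\item It proves the degree bound without relying on the computer simplification.
\item It shows that all quadratic terms of every $Q_{r_1,r_2,r_3}$ come from $\tfrac1{18}H_\lambda(B)$ alone. This agrees with the paper's $(0,0,0)$ example, where $H_\lambda(C),H_\lambda(D),H_\lambda(E)$ are linear.
\item It sidesteps a point that both the paper and your direct expansion leave implicit. When $m_3=0$ (or $m_2=m_3=0$), some Jacobi--Trudi entries have index $-1$ or $-2$. The residue-class affine formulas respect the convention $H_j=0$ there only because the quasi-polynomial of $1/\det(I-tA^{-1})$ vanishes at $m=-1,-2,-3$, its denominator having degree $4$.
\end{itemize}
The explicit appendix polynomials still require the finite computation (or your ten-point interpolation) either way.
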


\begin{proof}
For each contributing row $A$, combine the weight $W(A)$ from Table~\ref{torsion-sl4-weights} with the Jacobi--Trudi trace~\eqref{eqn-Jacobi-Trudi}:
\begin{equation}\label{eq:sl4-general-weighted-sum}
  \chi_{(\ell_1,\ell_2,\ell_3)}=\sum_{A\in\mathcal{S}}W(A)H_\lambda(A).
\end{equation}
Every sequence $H_m(A)$ in Table~\ref{torsion-sl4-traces} is affine on residue classes modulo $12$.  Substituting~\eqref{eq:sl4-residue-notation} into the six terms of the determinant therefore gives a polynomial on each residue triple.  Simplification of this finite list gives the polynomials of degree at most two in the appendix.

For example: suppose $r_1=r_2=r_3=0$.  Then $\lambda_1,\lambda_2,\lambda_3$ are even, and the row $B$ gives
\[
\begin{aligned}
H_\lambda(B)=(36m_2+36m_3+6)m_1+36m_2^2+36m_2m_3+12m_2+6m_3+1.
\end{aligned}
\]
The remaining nonzero traces reduce to
\[
\begin{aligned}
& H_\lambda(C)=H_\lambda(I)=4m_1+4m_2+4m_3+1,\\
& H_\lambda(D)=H_\lambda(H)=6m_1+6m_3+1,\\
& H_\lambda(E)=H_\lambda(G)=12m_1-12m_2+12m_3+1,\\
& H_\lambda(K_\pm)=H_\lambda(L_\pm)=H_\lambda(N_\pm)=1.
\end{aligned}
\]
Substitution in~\eqref{eq:sl4-general-weighted-sum} yields
\begin{equation}\label{eq:sl4-zero-residue-example}
  \chi_{(12m_1,12m_2,12m_3)}=(2m_2+2m_3-2)m_1+2m_2^2+2m_2m_3-2m_3.
\end{equation}
All other entries follow from the same finite determinant calculation.
\end{proof}

\begin{coro}\label{cor:sl4-nonvanishing}
For every integer \(t\geq 1\), one has
\[
H^\bullet\!\left(
\mathrm{SL}_4(\mathbb Z),
\mathcal M_{(12t,12t,12t)}
\right)\neq 0.
\]
More precisely,
\[
\sum_{q\geq 0}
\dim H^q\!\left(
\mathrm{SL}_4(\mathbb Z),
\mathcal M_{(12t,12t,12t)}
\right)
\geq 8t^2-4t.
\]
In particular, the coefficient systems
\(\mathcal M_{(12t,12t,12t)}\), \(t\geq1\), give an infinite
family of irreducible highest-weight coefficient systems with
nonvanishing rational cohomology.
\end{coro}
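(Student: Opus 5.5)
Substitute $m_1=m_2=m_3=t$ into the residue-zero formula \eqref{eq:sl4-zero-residue-example} from the proof of Theorem~\ref{thm:sl4-residue-formulas}. This is legitimate because $(\ell_1,\ell_2,\ell_3)=(12t,12t,12t)$ has residue triple $(0,0,0)$. That triple is allowed by Proposition~\ref{prop:sl4-parity-vanishing}: $\ell_1\equiv\ell_3\equiv0\pmod 2$. The substitution gives
\[
\chi_{(12t,12t,12t)}=(2t+2t-2)t+2t^2+2t^2-2t=8t^2-4t.
\]
This value is positive for every integer $t\geq1$, since $8t^2-4t=4t(2t-1)>0$.

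Next, relate the Euler characteristic to the total dimension. As recalled in Section~\ref{se:basic}, the groups $H^q(\mathrm{SL}_4(\mathbb Z),\mathcal M_\lambda)$ are finite-dimensional and vanish above the virtual cohomological dimension. Hence $\chi_h$ is a finite alternating sum, and
\[
\bigl|\chi_h(\Gamma,\mathcal M_\lambda)\bigr|\le\sum_{q}\dim H^q(\Gamma,\mathcal M_\lambda).
\]
Applying this with $\lambda=(12t,12t,12t)$ gives the bound $\sum_q\dim H^q\geq 8t^2-4t\geq 4>0$. Nonvanishing of $H^\bullet$ follows at once. The family is infinite and irreducible because distinct $t$ give distinct highest weights.

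The only real point needing care is the correctness of \eqref{eq:sl4-zero-residue-example} itself. I would verify it by recomputing the weighted sum \eqref{eq:sl4-general-weighted-sum} at $m_1=m_2=m_3=t$, using the listed traces. These are:
\begin{itemize}
\item $H_\lambda(B)=108t^2+24t+1$;
\item $H_\lambda(C)=H_\lambda(I)=12t+1$;
\item $H_\lambda(D)=H_\lambda(H)=12t+1$;
\item $H_\lambda(E)=H_\lambda(G)=12t+1$;
\item all remaining traces equal $1$.
\end{itemize}
With the weights from Table~\ref{torsion-sl4-weights}, the sum is
\[
\tfrac{1}{18}(108t^2+24t+1)-\Bigl(\tfrac14+\tfrac16+\tfrac1{36}\Bigr)(12t+1)+\tfrac{2}{24}+\tfrac29+\tfrac2{24}.
\]
This simplifies to $6t^2+\tfrac43t+\tfrac1{18}-\tfrac{16}{3}t-\tfrac49+\tfrac7{18}=6t^2-4t$.

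This recomputation does not agree with $8t^2-4t$, so I expect this check to be the main obstacle. I would first recheck the trace $H_\lambda(B)$ via the Jacobi--Trudi determinant \eqref{eqn-Jacobi-Trudi}. The partition is $(36t,24t,12t)$, and $H_m(B)=m/2+1$ for even $m$ and $0$ for odd $m$. The determinant then has only the diagonal product and the anti-diagonal-type terms with even indices. Whichever value the corrected computation gives, the structure of the argument is unchanged: the bound is $|\chi|$, still a positive quadratic for $t\geq1$. The stated constant $8t^2-4t$ is exactly what the paper's formula \eqref{eq:sl4-zero-residue-example} yields, so within the paper's framework the corollary follows by direct substitution.
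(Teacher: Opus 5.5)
Your main argument is the paper's own proof, and it is correct and complete. You substitute $m_1=m_2=m_3=t$ into \eqref{eq:sl4-zero-residue-example} to get $8t^2-4t>0$, then bound the absolute value of the finite alternating sum by the total dimension.

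The discrepancy in your verification comes from an arithmetic slip, not from the formula. At $m_1=m_2=m_3=t$ the paper's expression for $H_\lambda(B)$ is $(72t+6)t+36t^2+36t^2+18t+1=144t^2+24t+1=(12t+1)^2$; you dropped one of the $36t^2$ terms. The Jacobi--Trudi determinant confirms this. For $\lambda=(36t,24t,12t)$ every entry with odd index vanishes, since $H_m(B)=0$ for odd $m$, and the determinant reduces to
\[
H_{24t}(B)\bigl(H_{36t}(B)H_{12t}(B)-H_{36t+2}(B)H_{12t-2}(B)\bigr)=(12t+1)\bigl((18t+1)(6t+1)-(18t+2)\cdot 6t\bigr)=(12t+1)^2.
\]
With this value, $\tfrac1{18}(144t^2+24t+1)=8t^2+\tfrac43t+\tfrac1{18}$. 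Your weighted sum then becomes $8t^2+\tfrac43t-\tfrac{16}3t+\tfrac1{18}-\tfrac49+\tfrac7{18}=8t^2-4t$, exactly the paper's value. Your other traces ($12t+1$ for $C,D,E,G,H,I$ and $1$ for $K_\pm,L_\pm,N_\pm$) and the weights were right.

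Your fallback remark (``whichever value the corrected computation gives, the structure is unchanged'') only protects the qualitative nonvanishing. The quantitative clause $\sum_q\dim H^q\geq 8t^2-4t$ depends on the exact value of $\chi_{(12t,12t,12t)}$. Had it really been $6t^2-4t$, that clause would not follow. So the check was necessary, and it confirms the stated bound.
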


\begin{proof}
Setting \(m_1=m_2=m_3=t\) in
\eqref{eq:sl4-zero-residue-example} gives
\[
\chi_h\!\left(
\mathrm{SL}_4(\mathbb Z),
\mathcal M_{(12t,12t,12t)}
\right)
=8t^2-4t.
\]
Since \(t\geq1\), this number is positive. On the other hand, the
triangle inequality applied to the alternating sum defining the
homological Euler characteristic gives
\[
\begin{aligned}
8t^2-4t
&=
\left|
\chi_h\!\left(
\mathrm{SL}_4(\mathbb Z),
\mathcal M_{(12t,12t,12t)}
\right)
\right| 
\leq
\sum_{q\geq0}
\dim H^q\!\left(
\mathrm{SL}_4(\mathbb Z),
\mathcal M_{(12t,12t,12t)}
\right).
\end{aligned}
\]
The asserted lower bound follows, and therefore at least one of these
cohomology groups is nonzero.
\end{proof}

\begin{prop}[Duality]\label{prop:sl4-duality}
For all $\ell_1,\ell_2,\ell_3\geq0$, $\chi_{(\ell_1,\ell_2,\ell_3)}=\chi_{(\ell_3,\ell_2,\ell_1)}.$

\end{prop}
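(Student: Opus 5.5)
The plan is to realize the symmetry $(\ell_1,\ell_2,\ell_3)\mapsto(\ell_3,\ell_2,\ell_1)$ as passage to the contragredient, $\mathcal M_\lambda^\vee$. Under the conventions of Section~\ref{se:traces}, the dual of $\mathcal M_\lambda$ has highest weight $-w_0\lambda$. The longest Weyl element $w_0$ reverses $(\epsilon_1,\dots,\epsilon_4)$, and for $\mathrm{SL}_4$ one has $-w_0\omega_i=\omega_{4-i}$. Hence $\mathcal M_\lambda^\vee\simeq\mathcal M_{\lambda^*}$ with $\lambda^*=\ell_3\omega_1+\ell_2\omega_2+\ell_1\omega_3$. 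The statement then reduces to the identity $\chi_h(\Gamma,\mathcal M_\lambda)=\chi_h(\Gamma,\mathcal M_\lambda^\vee)$.

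I would prove that identity directly from Wall's formula~\eqref{eq:hecT}. The character of the dual satisfies
\[
\operatorname{Tr}(T^{-1}\mid\mathcal M_\lambda^\vee)=\operatorname{Tr}(T\mid\mathcal M_\lambda).
\]
The map $T\mapsto T^{-1}$ permutes the $\Gamma$-conjugacy classes of torsion elements, and it preserves centralizers, since $C_\Gamma(T^{-1})=C_\Gamma(T)$. Reindexing the sum in~\eqref{eq:hecT} by $T\mapsto T^{-1}$ therefore gives
\[
\chi_h(\Gamma,\mathcal M_\lambda^\vee)=\sum_{(T)}\chi_{\mathrm{orb}}(C_\Gamma(T))\operatorname{Tr}(T\mid\mathcal M_\lambda)=\sum_{(T)}\chi_{\mathrm{orb}}(C_\Gamma(T))\operatorname{Tr}(T^{-1}\mid\mathcal M_\lambda)=\chi_h(\Gamma,\mathcal M_\lambda).
\]
This argument is independent of the compressed formula and of the tables.

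As an alternative route, which also serves as a consistency check, I would note the following. Each representative $T$ has the same eigenvalue multiset as $T^{-1}$ (end of Section~\ref{se:basic}), so $H_{\lambda^*}(T)=\operatorname{Tr}(T\mid\mathcal M_\lambda)=H_\lambda(T)$ termwise. Combined with~\eqref{eq:sl4-general-weighted-sum}, this gives the result row by row.

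A third option avoids Wall entirely. The automorphism $g\mapsto(g^{\top})^{-1}$ of $\Gamma$ pulls $\mathcal M_\lambda$ back to $\mathcal M_\lambda^\vee$, and an automorphism of $\Gamma$ induces isomorphisms on cohomology. This even yields degreewise equality $H^q(\Gamma,\mathcal M_\lambda)\cong H^q(\Gamma,\mathcal M_{\lambda^*})$, and the Euler characteristic statement follows immediately.

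The only step needing care is the identification $\mathcal M_\lambda^\vee\cong\mathcal M_{\lambda^*}$ in the paper's coordinates. In partition coordinates, the dual of $(\lambda_1,\lambda_2,\lambda_3,0)$ is $(\lambda_1,\lambda_1-\lambda_3,\lambda_1-\lambda_2,0)$ after twisting by $\det^{\lambda_1}$, which is trivial on $\mathrm{SL}_4$. The successive differences of this partition are $(\ell_3,\ell_2,\ell_1)$, as required. I would check this explicitly and then present the inverse-transpose argument as the main proof.
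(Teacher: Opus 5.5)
Your first argument is the paper's proof: identify $(\ell_3,\ell_2,\ell_1)$ with the contragredient, then reindex Wall's formula by $T\mapsto T^{-1}$ using $C_\Gamma(T^{-1})=C_\Gamma(T)$. Your termwise second argument is a minor variant of it.

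The inverse-transpose argument, which you intend to present as the main proof, is a genuinely different route, and it is correct. The map $\theta(g)=(g^{\top})^{-1}$ is an algebraic automorphism of $\mathrm{SL}_4$ that acts by inversion on the diagonal torus. Hence $\theta^*\mathcal M_\lambda$ and $\mathcal M_\lambda^\vee$ have equal characters and are isomorphic; an isomorphism over $\mathbb C$ already suffices for dimensions. The compatible pair $(\theta,\mathrm{id})$ then induces $H^q(\Gamma,\mathcal M_\lambda)\cong H^q(\Gamma,\theta^*\mathcal M_\lambda)$. These two checks are all you need to write out.

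Compared with the paper, your route needs no torsion classes, centralizers or orbifold Euler characteristics. It also proves the degreewise statement $b_q(\lambda)=b_q(\lambda^*)$ for every $q$, so the Betti-number bounds of Section~\ref{se:cohomological-consequences} all inherit the symmetry $\ell_1\leftrightarrow\ell_3$. The paper's route gains generality in the group instead. Wall's formula gives $\chi_h(\Gamma,W^\vee)=\chi_h(\Gamma,W)$ for every group to which it applies, without needing an automorphism that exchanges $W$ and $W^\vee$.

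If you carry either argument over to $\mathrm{GL}_4(\mathbb Z)$, the twist shifts. One has $\widetilde{\mathcal M}_{\lambda^*}\cong\widetilde{\mathcal M}_\lambda^\vee\otimes\det^{\lambda_1}$, so $H_{\lambda^*}(A)=\det(A)^{\lambda_1}H_\lambda(A)$. The correct identity is therefore $\chi^\varepsilon_{(\ell_1,\ell_2,\ell_3)}=\chi^{\varepsilon'}_{(\ell_3,\ell_2,\ell_1)}$ with $\varepsilon'\equiv\varepsilon+\ell_1+\ell_2+\ell_3\pmod 2$. For example, the paper's formulas give $\chi^0_{(2,1,0)}=-1$ but $\chi^0_{(0,1,2)}=0$.
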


\begin{proof}
The contragredient of $\mathcal{M}_{(\ell_1,\ell_2,\ell_3)}$ has
highest weight $(\ell_3,\ell_2,\ell_1)$, and
$H_{\lambda^\vee}(T)=H_\lambda(T^{-1})$.  Inversion permutes the
torsion terms in Wall's formula and leaves both the centralizer and
its orbifold Euler characteristic unchanged.  The asserted equality
follows.
\end{proof}

\subsection{Rational generating function}

Define
\begin{align*}
  P(x,y,z)={}\sum_{a,b,c\geq0}\chi_{(2a,b,2c)}x^{2a}y^bz^{2c} +\sum_{a,b,c\geq0}\chi_{(2a+1,2b,2c+1)}x^{2a+1}y^{2b}z^{2c+1}.
\end{align*}
By Proposition~\ref{prop:sl4-parity-vanishing}, this is the generating series over all potentially nonzero highest weights.  Let
\[
  \mathcal{I} =\{(a,b)\in\{0,1,\ldots,10\}^2:a\equiv b\pmod2\}.
\]
The polynomials $f_{a,b}(y)$ are specified in Table~\ref{tab:sl4-numerator} for $a\leq b$ and extended by $f_{b,a}(y)=f_{a,b}(y)$.  We put $f_{a,b}(y)=0$ when
$(a,b)\notin\mathcal{I}$.

{ \begin{center}
\tiny\renewcommand{\arraystretch}{2.4}
{\begin{longtable}{@{}c p{0.84\textwidth}@{}}
\caption{Numerator polynomials for the
$\mathrm{SL}_4(\mathbb{Z})$ generating function.}\label{tab:sl4-numerator}\\
\hline
$(a,b)$&$f_{a,b}(y)$\\
\hline
\endfirsthead
\multicolumn{2}{c}{\tablename~\thetable\ (continued)}\\
\hline
$(a,b)$&$f_{a,b}(y)$\\
\hline
\endhead
(0,0)&$y^2 + y^4 - 4y^9 + y^{14} + y^{16} $\\
\hline
(0,2)&$-1 - y - y^2 - y^3 + 2y^6 + 4y^9 + 2y^{10} + y^{12} - y^{13} - y^{14} - y^{15} - 2y^{16}$\\
\hline 
(0,4)& $-y^2 - y^5 - y^6 + y^7 + 2y^9 + 2y^{13} - y^{14} + 2y^{15} - y^{17} - y^{18} - y^{19}$\\
\hline
(0,6)&$1 + y + y^2 - 2y^4 + y^5 - 2y^6 - 2y^{10} - y^{12} + y^{13} + y^{14} + y^{17}$\\
\hline
(0,8)&$2y^3 + y^4 + y^5 + y^6 - y^7 - 2y^9 - 4y^{10} - 2y^{13} + y^{16} + y^{17} + y^{18} + y^{19}$ \\
\hline
(0,10)& $-y^3 - y^5 + 4y^{10} - y^{15} - y^{17}$\\
\hline
(1,1)& $-1 + y^4 + 2y^6 - 4y^8 + 2y^{10} + y^{12} - y^{16}$\\
\hline
(1,3)& $y^4 - 2y^6 + 3y^8 - 2y^{12} - 2y^{14} + y^{16} + y^{20}$\\
\hline
(1,5)& $1 + y^2 - 3y^4 - y^6 + 2y^8 - 2y^{10} + y^{12} + 3y^{14} - y^{16} - y^{18}$\\
\hline
(1,7)& $-2y^2 + y^4 + 2y^6 + y^8 - 4y^{10} + 2y^{12} + y^{16} - y^{20}$\\
\hline
(1,9)& $y^2 - y^6 - 2y^8 + 4y^{10}  - 2y^{12} - y^{14} + y^{18}$\\
\hline
(2,2)& $2 + 2y + 2y^2 + 2y^3 + y^4 - 4y^6 - 4y^7 - y^8 - 4y^9 - 2y^{10} - 2y^{12} + 2y^{13} + 2y^{15} + 3y^{16} + y^{20}$\\
\hline
(2,4)& $1 + y  + 2y^2 + y^3 - 2y^4 - y^6 + 2y^7 - 2y^9 - 2y^{10} - 2y^{11} - y^{12} - 3y^{13} + 2y^{14} - y^{15} + 2y^{17} + y^{18} + 2y^{19}$\\
\hline
(2,6)& $-2 - y
 - 3y^2
 - y^3
 + 2y^4
 + 4y^6
 + 2y^7
 + y^8
 - 2y^{10}
 + 2y^{11}
 + 2y^{12}
 + y^{13}
 - y^{14}
 - y^{15}
 - 2y^{17}
 - y^{20}$\\
\hline
(2,8)&$
-2y
 - y^2
 - 2y^3
 - 2y^4
 - y^6
 + 2y^7
 + 2y^9
 + 8y^{10}
 + 2y^{11}
 + 2y^{13}
 - y^{14}
 - 2y^{16}
 - 2y^{17}
 - y^{18}
 - 2y^{19}
$\\
\hline
(2,10)&$
y
 + y^2
 + y^3
 + y^4
 - 2y^7
 - 4y^{10}
 - 2y^{11}
 - y^{13}
 + y^{14}
 + y^{15}
 + y^{16}
 + 2y^{17}
$\\
\hline
(3,3)& $y^2
 - 2y^4
 - y^6
 - 2y^8
 + 4y^{12}
 + 5y^{14}
 - 2y^{16}
 - y^{18}
 - 2y^{20}$\\
\hline
(3,5)& $-1
 - 2y^2
 + 2y^4
 + 6y^6
 - y^8
 - 4y^{10}
 - y^{12}
 - 2y^{14}
 + 2y^{16}
 + 2y^{18}
 - y^{20}$\\
\hline
(3,7)&$
2
 + y^2
 - 2y^4
 - 3y^6
 - 2y^8
 + 8y^{10}
 - 2y^{12}
 - 3y^{14}
 - 2y^{16}
 + y^{18}
 + 2y^{20}
$\\
\hline
(3,9)&$
-1
 + y^4
 + 2y^8
 - 4y^{10}
 + y^{12}
 + 2y^{14}
 + y^{16}
 - 2y^{18}
$\\
\hline
(4,4)& $-1
 - y^2
 + 2y^4
 + 4y^6
 - 4y^7
 - 2y^{10}
 + 4y^{11}
 + y^{12}
 - y^{14}
 - 4y^{15}
 + 2y^{18}$\\
\hline
(4,6)& $1
 - y
 + 2y^5
 - 2y^6
 - y^7
 - y^8
 - 2y^9
 + 8y^{10}
 - 2y^{11}
 - y^{12}
 - y^{13}
 - 2y^{14}
 + 2y^{15}
 - y^{19}
 + y^{20}$\\
 \hline
 (4,8)&$
 -1
 - 2y^3
 - y^5
 - y^6
 + y^7
 + 2y^8
 + 2y^9
 - 2y^{10}
 + y^{12}
 + 2y^{13}
 + 4y^{14}
 + 2y^{16}
 - y^{17}
 - 3y^{18}
 - y^{19}
 - 2y^{20}
 $\\
 \hline
(4,10)&$
y^3
 + y^6
 + y^7
 - y^8
 - 2y^{10}
 - 2y^{14}
 + y^{15}
 - 2y^{16}
 + y^{18}
 + y^{19}
 + y^{20}
$\\ 
\hline
(5,5)& $1
 - 6y^6
 - y^8
 + 12y^{10}
 - y^{12}
 - 6y^{14}
 + y^{20}$\\
 \hline
 (5,7)&$
 -1
 + 2y^2
 + 2y^4
 - 2y^6
 - y^8
 - 4y^{10}
 - y^{12}
 + 6y^{14}
 + 2y^{16}
 - 2y^{18}
 - y^{20}
 $\\
 \hline
 (5,9)&$
 -y^2
 - y^4
 + 3y^6
 + y^8
 - 2y^{10}
 + 2y^{12}
 - y^{14}
 - 3y^{16}
 + y^{18}
 + y^{20}
 $\\
 \hline
 (6,6)&$
 2y^2
 - 4y^5
 - y^6
 + y^8
 + 4y^9
 - 2y^{10}
 - 4y^{13}
 + 4y^{14}
 + 2y^{16}
 - y^{18}
 - y^{20}
 $\\
\hline
(6,8)&$
2y
 + y^2
 + 2y^3
 - y^5
 + 2y^6
 - 3y^7
 - y^8
 - 2y^9
 - 2y^{10}
 - 2y^{11}
 + 2y^{13}
 - y^{14}
 - 2y^{16}
 + y^{17}
 + 2y^{18}
 + y^{19}
 + y^{20}
$\\
\hline
(6,10)&$-y
 - y^2
 - y^3
 + 2y^5
 - y^6
 + 2y^7
 + 2y^{11}
 + y^{13}
 - y^{14}
 - y^{15}
 - y^{18}$\\
\hline
(7,7)&$
-2
 - y^2
 - 2y^4
 + 5y^6
 + 4y^8
 - 2y^{12}
 - y^{14}
 - 2y^{16}
 + y^{18}
$\\
\hline
(7,9)&$
1
 + y^4
 - 2y^6
 - 2y^8
 + 3y^{12}
 - 2y^{14}
 + y^{16}
$\\
 \hline
 (8,8)&$
 1 + 3y^4+ 2y^5
 + 2y^7
 - 2y^8
 - 2y^{10}
 - 4y^{11}
 - y^{12}
 - 4y^{13}
 - 4y^{14}
 + y^{16}
 + 2y^{17}
 + 2y^{18}
 + 2y^{19}
 + 2y^{20}
 $\\
 \hline
 (8,10)&$
 -2y^4
 - y^5
 - y^6
 - y^7
 + y^8
 + 2y^{10}
 + 4y^{11}
 + 2y^{14}
 - y^{17}
 - y^{18}
 - y^{19}
 - y^{20}
 $\\
 \hline
 (9,9)&$ -y^4 + y^8 + 2y^{10} - 4y^{12} + 2y^{14} + y^{16} - y^{20}$\\
\hline
(10,10)& $y^4  + y^6 - 4y^{11} + y^{16} + y^{18}$\\
\hline
\end{longtable}}
\end{center}
}

\begin{thm}\label{teo-euler-sl4}
The generating series $P(x,y,z)$ is
\begin{equation}\label{eq:sl4-rational-generating-function}
  P(x,y,z)
  =\frac{\displaystyle
     \sum_{(a,b)\in\mathcal{I}}f_{a,b}(y)x^az^b}
    {\displaystyle
     \prod_{i=1}^3(1-x^{2i})(1-z^{2i})
     (1-y^4)(1-y^6)(1-y^{12})}.
\end{equation}
\end{thm}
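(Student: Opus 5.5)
The plan is to prove~\eqref{eq:sl4-rational-generating-function} row by row, starting from the weighted sum~\eqref{eq:sl4-general-weighted-sum}. Since $\chi_{(\ell_1,\ell_2,\ell_3)}=\sum_{A\in\mathcal S}W(A)H_\lambda(A)$, it suffices to compute, for each contributing row $A$, the series
\[
G_A(x,y,z)=\sum_{\ell_1,\ell_2,\ell_3\ge0}H_\lambda(A)\,x^{\ell_1}y^{\ell_2}z^{\ell_3}
\]
as a rational function. After that, we restrict to the parity patterns in the definition of $P$ and sum with the weights $W(A)$. For each row the eigenvalues of $T_A$ are known roots of unity $\mu_1,\dots,\mu_4$. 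By the Weyl character formula, equivalently the bialternant form of Jacobi--Trudi, $H_\lambda(A)=s_\lambda(\mu)$. The standard generating identity for Schur polynomials in fundamental-weight coordinates applies: $\sum_\lambda s_\lambda(\mu)x^{\ell_1}y^{\ell_2}z^{\ell_3}$ equals a rational function with denominator $\prod_i(1-x\mu_i)\prod_{i<j}(1-y\mu_i\mu_j)\prod_i(1-z\mu_i^{-1})$, using $e_3=\mu_1\cdots\mu_4\,\overline{e_1}$ on $\det=1$. Its numerator is a short universal polynomial; for $\mathrm{GL}_4$/$\mathrm{SL}_4$ this is the Littlewood-type formula, with numerator $1-xyz\,\cdots$ type corrections. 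When eigenvalues are repeated, as in rows $B,C,D,\dots$, we take the limit or use the specialization directly. In either case $G_A$ is rational with denominator a product of factors $1-\zeta x$, $1-\zeta y$, $1-\zeta z$ with $\zeta$ a root of unity of order dividing $12$.

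Next we impose parity. Proposition~\ref{prop:sl4-parity-vanishing} says the full series $\sum_\lambda\chi_{(\ell_1,\ell_2,\ell_3)}x^{\ell_1}y^{\ell_2}z^{\ell_3}$ already equals $P$, because all other coefficients vanish. So no extraction is needed: $P=\sum_A W(A)G_A$. We then bring everything over the common denominator $D=\prod_{i=1}^3(1-x^{2i})(1-z^{2i})\,(1-y^4)(1-y^6)(1-y^{12})$. To justify that $D$ suffices, note the following. The $x$- and $z$-factors of each $G_A$ pair up into $(1-\mu x)(1-\bar\mu x)$. After summing conjugate rows (for example $C$ with $I$, $E$ with $G$, $K_\pm$ with $N_\pm$, where $T'=-T$), the odd-$x$ poles combine into factors dividing $1-x^2$, $1-x^4$, $1-x^6$. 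The $y$-factors involve $\mu_i\mu_j$, which are roots of unity of order dividing $12$ with multiplicity bounded by the degree-two growth, and these divide $(1-y^4)(1-y^6)(1-y^{12})$.

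An alternative, more robust route avoids tracking poles. Theorem~\ref{thm:sl4-residue-formulas} gives that $\chi$ is a quasi-polynomial of period dividing $12$ and total degree at most $2$. Any such function restricted to the two parity families has a generating function of the form $N/D$ with $D$ as above, provided $D$'s factors cover degree $\le1$ in each of $x,z$ separately plus the joint degree-two terms. The key check is that a polynomial in $m_1$ of degree $\le2$ on classes mod $12$ is generated by the denominators $(1-x^2)(1-x^4)(1-x^6)$: these have all poles at roots of unity of order dividing $12$ restricted to even exponents, with pole order up to $3$ at $x=\pm1$. The $y$-factors handle the same for $\ell_2$. Then $N=P\cdot D$ is a polynomial, and it is determined by finitely many coefficients of $P$. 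Concretely, its degrees are bounded by $\deg D$ plus the period, giving $x$- and $z$-degrees at most $10$ and $y$-degree at most $20$. We would verify that it equals $\sum f_{a,b}(y)x^az^b$ by computing $\chi_{(\ell_1,\ell_2,\ell_3)}$ for $\ell_1,\ell_3\le 12$ and $\ell_2\le 22$ from Appendix~\ref{app:sl4-residue-tables}, multiplying by $D$, and truncating. The symmetry $f_{a,b}=f_{b,a}$ is a consistency check coming from Proposition~\ref{prop:sl4-duality}. The restriction to $a\equiv b\pmod 2$ follows from the parity vanishing, since $D$ is even in $x$ and $z$.

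The main obstacle is justifying a priori that the product $P\cdot D$ is a polynomial of bounded degree. This amounts to showing that each residue polynomial $Q_{r_1,r_2,r_3}$ has no mixed monomials, such as $m_1^2m_2$ or $m_1 m_3$, whose generating function would need higher pole order in $D$ than it provides. My first attempt would be to check this from the degree-$\le2$ bound: in total degree two, the pole order at $x=z=1$ jointly is covered by $(1-x^2)(1-z^2)$ times the $y$-factor, while the degree-three $(1-x^2)(1-x^4)(1-x^6)$ gives room to spare. The remaining work is the finite coefficient comparison.
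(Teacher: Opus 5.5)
Your first route is close in spirit to the paper's proof. The paper computes each row series $F_A$ (your $G_A$) exactly, by expanding the Jacobi--Trudi determinant into six sums of products of residue-class-affine sequences and summing them with the elementary series for $1,n,n^2$. It then finds that the weighted sum collapses onto $D$. Done exactly, that computation needs no a priori control of the denominator.

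The gap is in the step you single out as the crux. Your second route assumes that every quasi-polynomial of period dividing $12$ and total degree at most $2$ has generating function $N/D$ with $N$ a polynomial. This is false. The factor $(1-x^2)(1-x^4)(1-x^6)$ does not vanish at primitive twelfth roots of unity, and it has only simple zeros at $\pm i$ and at primitive cube and sixth roots. So already the degree-zero function $[\ell_1\equiv0\bmod12]\,[\ell_3\equiv0\bmod2]$, whose series is $1/\bigl((1-x^{12})(1-y)(1-z^2)\bigr)$, is not of that form. Likewise $(1-y^4)(1-y^6)(1-y^{12})$ allows only simple poles at primitive twelfth roots and double poles at $\pm i$, whereas degree two alone permits triple poles. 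Hence your window comparison ($\ell_1,\ell_3\le12$, $\ell_2\le22$) only shows that $DP$ agrees with $\sum f_{a,b}x^az^b$ in low degrees, not that $DP$ is a polynomial.

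The obstacle is also misdiagnosed. Mixed monomials are harmless: $m_1m_3$ does occur in \eqref{eq:sl4-zero-residue-example} and needs only double poles at $x=1$ and $z=1$, and cubic monomials are excluded by Theorem~\ref{thm:sl4-residue-formulas}. What must be controlled is how the residue polynomials depend on $(r_1,r_2,r_3)$, i.e.\ the pole orders at non-real roots of unity. For the same reason, ``multiplicity bounded by the degree-two growth'' in your first route cannot produce the simple and double poles that $D$ permits there.

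The missing ingredient is the structural reason $D$ suffices, and your Schur-function setup supplies it if used precisely. Each $G_A$ is the specialization at the eigenvalues of the multigraded Hilbert series $N(x,y,z;\mu)\big/\prod_i(1-x\mu_i)\prod_{i<j}(1-y\mu_i\mu_j)\prod_i(1-z\mu_i^{-1})$ with $N$ a polynomial. That numerator is not short for $\mathrm{SL}_4$, but only its polynomiality matters, and no limits are needed at repeated eigenvalues.

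All $\mu_i$ have order in $\{1,2,3,4,6\}$. So, with no pairing of $T$ and $-T$ needed, the $x$- and $z$-poles are simple except at $\pm1$, where they have order at most $2$. Primitive twelfth roots occur only in $y$, via $\mu_i\mu_j$ in rows $K_\pm,N_\pm$, and only simply. The only excess over $D$ is the factor $(1+y)^4$ from row $B$.

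Theorem~\ref{thm:sl4-residue-formulas} makes $(1-x^{12})^3(1-y^{12})^3(1-z^{12})^3$ a valid denominator for $P$. By unique factorization in $\mathbb{C}[x,y,z]$, the reduced denominator of $P$ therefore divides the gcd of this product with the lcm over $\mathcal S$ of the specialized denominators, and that gcd divides $D$. So $DP$ is a polynomial, and properness of the one-variable series bounds its degrees by $11$, $21$, $11$ in $x$, $y$, $z$. Only then does your finite window become a proof.

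Alternatively, use the honest denominator $D'=(1-x^{12})^3(1-y^{12})^3(1-z^{12})^3$. Compute $N'=D'P$ from the values with all $\ell_i\le35$, and check the polynomial identity $N'D=D'\sum_{(a,b)\in\mathcal I}f_{a,b}(y)x^az^b$.
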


\begin{proof}
For a block representative $A$, put
\[
  Q_A(t)=\sum_{m\geq0}H_m(A)t^m
        =\frac1{\det(I-tA^{-1})}
\]
and
\[
  F_A(x,y,z)
  =\sum_{\ell_1,\ell_2,\ell_3\geq0}
     H_\lambda(A)x^{\ell_1}y^{\ell_2}z^{\ell_3}.
\]
Expanding the Jacobi--Trudi determinant expresses $F_A$ as the sum of
six series whose coefficients are products of three shifted
coefficients of $Q_A(t)$.  Because every $Q_A(t)$ is a reciprocal
product of cyclotomic polynomials of orders dividing $12$, these
coefficient sequences are affine on residue classes modulo $12$.
Each of the six sums is therefore evaluated by the elementary
identities
\[
  \sum_{n\geq0}u^n=\frac1{1-u},\qquad
  \sum_{n\geq0}nu^n=\frac{u}{(1-u)^2},\qquad
  \sum_{n\geq0}n^2u^n=\frac{u(1+u)}{(1-u)^3}.
\]
Applying this calculation to the thirteen rows in $\mathcal{S}$ and
forming the weighted sum
\[
  P(x,y,z)=\sum_{A\in\mathcal{S}}W(A)F_A(x,y,z)
\]
gives, after cancellation, the common denominator
\[
  D(x,y,z)=
  \prod_{i=1}^3(1-x^{2i})(1-z^{2i})
  (1-y^4)(1-y^6)(1-y^{12}).
\]
Thus $D(x,y,z)P(x,y,z)$ is a polynomial.  The same finite calculation
shows that its $x,z$-support lies in $0\leq a,b\leq10$, that only
$a\equiv b\pmod2$ occurs, and that its coefficient of $x^az^b$ is
the polynomial $f_{a,b}(y)$ displayed in
Table~\ref{tab:sl4-numerator}.  This proves
\eqref{eq:sl4-rational-generating-function}.
\end{proof}

\begin{coro}\label{cor:sl4-quasipolynomial}
The function $(\ell_1,\ell_2,\ell_3)\mapsto \chi_{(\ell_1,\ell_2,\ell_3)}$ is a quasi-polynomial of total degree at most two and period $12$ on $\mathbb{Z}_{\geq0}^3$.
\end{coro}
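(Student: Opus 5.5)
The plan is to read the corollary off Theorem~\ref{thm:sl4-residue-formulas} together with Proposition~\ref{prop:sl4-parity-vanishing}, and to use the generating function of Theorem~\ref{teo-euler-sl4} only as a cross-check on the period. There are three things to show.

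\emph{Quasi-polynomiality with degree at most two.} Every triple $(\ell_1,\ell_2,\ell_3)\in\mathbb{Z}_{\geq0}^3$ lies in exactly one residue class $(r_1,r_2,r_3)$ modulo $12$. On an allowed class, Theorem~\ref{thm:sl4-residue-formulas} gives $\chi_{(\ell_1,\ell_2,\ell_3)}=Q_{r_1,r_2,r_3}(m_1,m_2,m_3)$ with $m_i=(\ell_i-r_i)/12$. This is an affine change of variables, so the expression is a polynomial in $\ell_1,\ell_2,\ell_3$ of total degree at most two. On a forbidden class, Proposition~\ref{prop:sl4-parity-vanishing} shows the value is identically $0$, which is also a polynomial. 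Hence the function is polynomial on each residue class modulo $12$, has total degree at most two, and $12$ is a common period.

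\emph{The least common period is exactly $12$.} A least common scalar period $p$ must divide $12$. Suppose $p$ were a proper divisor; then $p$ divides $6$ or $p$ divides $4$. I will rule out both cases by comparing residue polynomials, and it suffices to look at the symmetric-power specialisation $\ell_2=\ell_3=0$, $\ell_1=m$.
\begin{itemize}
\item[(i)] By Proposition~\ref{prop-Sym-SL4}, $\chi_h(\Gamma,\operatorname{Sym}^mV)$ equals $-m/6$ for $m\equiv0\pmod 6$ and $-m/6-1/3$ for $m\equiv4\pmod 6$. Take $m\equiv0\pmod4$. Inside this class, the values at $m\equiv0\pmod{12}$ lie on $-m/6$ and the values at $m\equiv4\pmod{12}$ lie on $-m/6-1/3$. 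These polynomials differ, so the restriction to $m\equiv0\pmod4$ is not a single polynomial, and $4$ is not a period. Hence $p\nmid 4$.
\item[(ii)] Again by Proposition~\ref{prop-Sym-SL4}, the symmetric-power values have period $6$, so they cannot detect the factor $4$. For this I will use a residue where the $D$-row trace, which has period $4$, enters. Concretely, I will compare the zero-residue formula~\eqref{eq:sl4-zero-residue-example} with the appendix formula for $(r_1,r_2,r_3)=(6,0,0)$, or for $(0,6,0)$. The trace $H_{6k}(D)$ splits according to $k\bmod 2$, so these two classes should give polynomials in $\ell$-coordinates that are not equal. If they agree, a different comparison will be needed.
\end{itemize}

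\emph{Cross-check via the denominator.} Separately, the denominator $\prod_{i}(1-x^{2i})(1-z^{2i})(1-y^4)(1-y^6)(1-y^{12})$ in Theorem~\ref{teo-euler-sl4} already shows that the coefficients are quasi-polynomials in the sense of vector partition functions. Its poles have orders dividing $12$, which confirms that $12$ is a period. The degree bound, however, comes from the residue formulas: the denominator alone only bounds the degree by $8$.

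The main obstacle I expect is step (ii), certifying that the period is not $6$. This requires exhibiting two residue triples that are congruent modulo $6$ but have genuinely different polynomials. The cleanest approach is to locate the $D$/$H$ contributions and the $K_\pm$/$N_\pm$ contributions, which have period $4$ and $12$, in an explicit appendix pair and check that they do not cancel.
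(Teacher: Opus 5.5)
\textbf{Gap: step (ii) is never carried out.} Your first step and step (i) are sound. Theorem~\ref{thm:sl4-residue-formulas} together with Proposition~\ref{prop:sl4-parity-vanishing} gives total degree at most two and a period dividing $12$. On the line $\ell_2=\ell_3=0$, the values $-m/6$ (for $m\equiv0\pmod{12}$) and $-m/6-1/3$ (for $m\equiv4\pmod{12}$) cannot lie on one polynomial, so $4$ is not a period.

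Step (ii), however, does not show that $6$ is not a period. You only predict that $(0,0,0)$ and $(6,0,0)$ or $(0,6,0)$ ``should'' give different polynomials, with a fallback if they do not. The reason you give is also false. For even $m$ the table gives $H_m(D)=m/2+1$, so $H_{6k}(D)=3k+1$ does not depend on $k\bmod 2$. The mod-$4$ and mod-$12$ behaviour enters only through the odd-index Jacobi--Trudi entries for $D,H$ and through $K_\pm,N_\pm$. Whether it survives the weighted sum over $\mathcal S$ is exactly what must be checked. As written, the lower bound on the period is unproved.

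\textbf{The missing idea.} It is the generating function you relegated to a cross-check: it certifies the period from below. Setting $x=z=0$ in Theorem~\ref{teo-euler-sl4} gives $\sum_b\chi_{(0,b,0)}y^b=f_{0,0}(y)/\bigl((1-y^4)(1-y^6)(1-y^{12})\bigr)$. At $\zeta=e^{\pi i/6}$ one has $(1-\zeta^4)(1-\zeta^6)\neq0$ while $f_{0,0}(\zeta)=(4+2\sqrt3)i\neq0$. If the function had period $p$, then $b\mapsto\chi_{(0,b,0)}$ would be polynomial on classes modulo $p$, so its series would have poles only at $p$th roots of unity. The pole at $\zeta$ therefore forces $12\mid p$, which excludes $4$ and $6$ at once. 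This is the paper's argument.

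\textbf{Alternative fix.} Your $(0,6,0)$ comparison can be completed with one evaluation. Under period $6$, a single polynomial would govern the line $(0,\ell_2,0)$ with $\ell_2\equiv0\pmod 6$. By~\eqref{eq:sl4-zero-residue-example} with $m_1=m_3=0$, it agrees with $\ell_2^2/72$ at every $\ell_2\equiv0\pmod{12}$, so it would predict $\chi_{(0,6,0)}=1/2$. But Wall's formula with $H_\lambda=H_6^2-H_5H_7$, or the series above, gives $\chi_{(0,6,0)}=1$.

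A minor correction: the denominator alone bounds the degree by $6$, not $8$, since each variable carries three factors.
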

\begin{proof}
Theorem~\ref{thm:sl4-residue-formulas} shows that, on each residue class modulo $12$, the function is given by one of the polynomials of
degree at most two recorded in Appendix~\ref{app:sl4-residue-tables}. Hence its period divides $12$.

It remains to show that no smaller common period is possible. Setting $x=z=0$ in~\eqref{eq:sl4-rational-generating-function} gives
\[ \sum_{b\geq0}\chi_{(0,b,0)}y^b = \frac{y^2+y^4-4y^9+y^{14}+y^{16}}{(1-y^4)(1-y^6)(1-y^{12})}.\]
Let $\zeta=e^{\pi i/6}$ be a primitive $12$-th root of unity. The first two factors in the denominator do not vanish at $\zeta$, whereas $
\zeta^2+\zeta^4-4\zeta^9+\zeta^{14}+\zeta^{16}=(4+2\sqrt{3})i\neq0.$
Thus the generating function has a pole at a primitive $12$-th root of unity. Consequently, the sequence $b\mapsto\chi_{(0,b,0)}$ cannot have period smaller than $12$. Therefore the minimal common period of the three-variable quasi-polynomial is exactly $12$.
\end{proof}

\section{Euler Characteristics of \texorpdfstring{$\mathrm{GL}_4(\mathbb{Z})$}{GL4(Z)}}\label{se:Euler-GL4}

Recall that $\widetilde{\Gamma}=\mathrm{GL}_4(\mathbb Z).$  For
$\lambda=\ell_1\epsilon_1 +\ell_2(\epsilon_1+\epsilon_2) +\ell_3(\epsilon_1+\epsilon_2+\epsilon_3) +\ell_4(\epsilon_1+\epsilon_2+\epsilon_3+\epsilon_4),$
where \(\ell_1,\ell_2,\ell_3\in\mathbb{Z}_{\geq0}\) and
\(\ell_4\in\mathbb{Z}\), let
\(\mathcal{M}_{(\ell_1,\ell_2,\ell_3,\ell_4)}\) denote the corresponding rational irreducible coefficient system.  In partition coordinates its highest weight is
$(\ell_1+\ell_2+\ell_3+\ell_4, \ell_2+\ell_3+\ell_4, \ell_3+\ell_4,\ell_4),$ and
\begin{equation}\label{eq:gl4-determinant-shift}
 \mathcal{M}_{(\ell_1,\ell_2,\ell_3,\ell_4)}\simeq \mathcal{M}_{(\ell_1,\ell_2,\ell_3,0)} \otimes \det^{\ell_4}.
\end{equation}
Since \(\det(\widetilde{\Gamma})=\{\pm1\}\), the Euler characteristic depends on \(\ell_4\) only modulo 2.  For \(\varepsilon\in\{0,1\}\), write
\begin{equation}\label{eq:gl4-chi-epsilon}
 \chi^\varepsilon_{(\ell_1,\ell_2,\ell_3)}=\chi_h\!\left(\widetilde{\Gamma},\mathcal{M}_{(\ell_1,\ell_2,\ell_3,0)}\otimes\det^\varepsilon\right).
\end{equation}
Thus \(\chi^0\) is the untwisted characteristic and \(\chi^1\) is the determinant-twisted characteristic.  In particular, the latter is
 $\chi^1_{(\ell_1,\ell_2,\ell_3)}=\chi_h\!\left(\widetilde{\Gamma},\mathcal{M}_{(\ell_1,\ell_2,\ell_3,1)}\right),$
with no shift of $\ell_1,\ell_2,\ell_3$. The index-two identity~\eqref{eq:index-two-sl4} becomes
\begin{equation}\label{eq:gl4-sl4-coefficient-relation}
 \chi_{(\ell_1,\ell_2,\ell_3)} =\chi^0_{(\ell_1,\ell_2,\ell_3)}+\chi^1_{(\ell_1,\ell_2,\ell_3)},
\end{equation}
where the left-hand side is the \(\mathrm{SL}_4(\mathbb{Z})\) characteristic defined in Section~\ref{se:Euler-Sl4}.

\subsection{Block types}

For a block representative \(A\), set
\[
 W_{\widetilde{\Gamma}}(A)=\bigl\lvert\operatorname{Res}(f_A)\bigr\rvert
        \chi_{\mathrm{orb}}\bigl(C_{\widetilde{\Gamma}}(A)\bigr).
\]
The compressed formula~\eqref{eq:hnthor} gives
\begin{equation}\label{eq:gl4-general-weighted-sum}
 \chi^\varepsilon_{(\ell_1,\ell_2,\ell_3)}=\sum_A W_{\widetilde{\Gamma}}(A)\det(A)^\varepsilon H_\lambda(A),\qquad \lambda=(\ell_1+\ell_2+\ell_3,\ell_2+\ell_3,\ell_3,0).
\end{equation}
The $18$ block types and their weights are listed in Table~\ref{torsion-gl4}.  These are the block types in Horozov's resultant formula, not a complete list of integral conjugacy classes. The notation agrees with Table~\ref{torsion-sl4-weights}: in particular, the type \(\Phi_6^2\) is denoted by \(P\).  The three additional determinant $-1$ types are denoted by \(Q,R,S\).

\begingroup
\small
\setlength{\tabcolsep}{10pt}
\renewcommand{\arraystretch}{1.15}
\begin{longtable}{@{}c c c c@{}}
\caption{Block types and weights for
\(\mathrm{GL}_4(\mathbb{Z})\).}
\label{torsion-gl4}\\
\hline
Case & Characteristic polynomial & \(\det(A)\) & \(W_{\widetilde{\Gamma}}(A)\)\\
\hline
\endfirsthead
\multicolumn{4}{c}{\tablename~\thetable\ (continued)}\\
\hline
Case & Characteristic polynomial & \(\det(A)\) & \(W_G(A)\)\\
\hline
\endhead
\(A\) & \(\Phi_1^4\) & \(1\) & \(0\)\\
\(B\) & \(\Phi_1^2\Phi_2^2\) & \(1\) & \(1/36\)\\
\(C\) & \(\Phi_1^2\Phi_3\) & \(1\) & \(-1/16\)\\
\(D\) & \(\Phi_1^2\Phi_4\) & \(1\) & \(-1/24\)\\
\(E\) & \(\Phi_1^2\Phi_6\) & \(1\) & \(-1/144\)\\
\(F\) & \(\Phi_2^4\) & \(1\) & \(0\)\\
\(G\) & \(\Phi_2^2\Phi_3\) & \(1\) & \(-1/144\)\\
\(H\) & \(\Phi_2^2\Phi_4\) & \(1\) & \(-1/24\)\\
\(I\) & \(\Phi_2^2\Phi_6\) & \(1\) & \(-1/16\)\\
\(J\) & \(\Phi_3^2\) & \(1\) & \(0\)\\
\(K\) & \(\Phi_3\Phi_4\) & \(1\) & \(1/24\)\\
\(L\) & \(\Phi_3\Phi_6\) & \(1\) & \(1/9\)\\
\(M\) & \(\Phi_4^2\) & \(1\) & \(0\)\\
\(N\) & \(\Phi_4\Phi_6\) & \(1\) & \(1/24\)\\
\(P\) & \(\Phi_6^2\) & \(1\) & \(0\)\\
\(Q\) & \(\Phi_1\Phi_2\Phi_3\) & \(-1\) & \(1/4\)\\
\(R\) & \(\Phi_1\Phi_2\Phi_4\) & \(-1\) & \(1/2\)\\
\(S\) & \(\Phi_1\Phi_2\Phi_6\) & \(-1\) & \(1/4\)\\
\hline
\end{longtable}
\endgroup

For example, substituting the complete-symmetric trace series
\(\det(I-tA^{-1})^{-1}\) into~\eqref{eq:gl4-general-weighted-sum}
gives the two symmetric-power series
\begin{align}
 \sum_{m\geq0}\chi_h\!\left(\widetilde{\Gamma},\operatorname{Sym}^mV\right)t^m &=\frac{1-t^4-t^6}{1-t^4-t^6+t^{10}},\label{gengl4}\\
 \sum_{m\geq0}\chi_h\!\left(\widetilde{\Gamma},\operatorname{Sym}^mV\otimes\det\right)t^m &=-\frac{1+t^2-t^6}{1-t^4-t^6+t^{10}}.\label{gengl4-odd}
\end{align}
Their sum is precisely the \(\mathrm{SL}_4(\mathbb{Z})\) series in~\eqref{gensl4}, as required by~\eqref{eq:index-two-sl4}.

\subsection{Vanishing and residue-class formulas}

We use the residue notation
\begin{equation}\label{eq:gl4-residue-notation}
 \ell_i=12m_i+r_i,
 \qquad m_i\in\mathbb{Z}_{\geq0},\quad 0\leq r_i<12,
 \qquad (i=1,2,3),
\end{equation}
and record \(\ell_4\) only through
\(\varepsilon\equiv\ell_4\pmod2\).

\begin{prop}[Parity vanishing]\label{prop:gl4-parity-vanishing}
For either \(\varepsilon\in\{0,1\}\), one has $\chi^\varepsilon_{(\ell_1,\ell_2,\ell_3)}=0$ whenever
\[ \ell_1\not\equiv\ell_3\pmod2 \qquad\text{or}\qquad \ell_1\equiv\ell_2\equiv\ell_3\equiv1\pmod2. \]
Thus a nonzero value is possible only for the even--arbitrary--even or the odd--even--odd parity pattern.
\end{prop}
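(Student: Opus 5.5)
The plan has two parts, mirroring the proof of Proposition~\ref{prop:sl4-parity-vanishing}.

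\emph{First case: $\ell_1\not\equiv\ell_3\pmod 2$.} The central element $-I_4\in\widetilde{\Gamma}$ acts on $\mathcal{M}_{(\ell_1,\ell_2,\ell_3,0)}\otimes\det^\varepsilon$ by the scalar
\[
(-1)^{\lambda_1+\lambda_2+\lambda_3}\det(-I_4)^\varepsilon=(-1)^{\ell_1+\ell_3}.
\]
This is $-1$ whenever $\ell_1\not\equiv\ell_3\pmod 2$, independently of $\varepsilon$. I would apply the Lyndon--Hochschild--Serre spectral sequence to
\[
1\to Z\to\widetilde{\Gamma}\to\widetilde{\Gamma}/Z\to1,\qquad Z=\langle -I_4\rangle .
\]
As in the proof of Proposition~\ref{prop-Sym-SL4}, $H^q(Z,-)$ vanishes for all $q$, since $Z$ has order two and acts by $-1$ on a rational vector space. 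Hence every $H^n(\widetilde{\Gamma},\mathcal{M}\otimes\det^\varepsilon)$ vanishes, and so does $\chi^\varepsilon$.

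\emph{Second case: all $\ell_i$ odd.} Here I would work directly with \eqref{eq:gl4-general-weighted-sum}, using the table of weights $W_{\widetilde{\Gamma}}$.

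For the determinant-one rows, each weight $W_{\widetilde{\Gamma}}$ is exactly half of the summed $\mathrm{SL}_4$ weight of the corresponding rows in Table~\ref{torsion-sl4-weights}. For $K$, $L$, $N$ one combines the $\pm$ rows; the traces of $K_\pm$ agree, and likewise for $L_\pm$ and $N_\pm$. So the determinant-one part of $\chi^\varepsilon$ equals exactly one half of the $\mathrm{SL}_4$ sum \eqref{eq:sl4-general-weighted-sum}. That sum was shown to vanish in the all-odd pattern, via the vanishing of the $B$ and $L$ determinants together with identity \eqref{eq:all-odd-trace-identity}.

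It then remains to show that the contribution of the determinant $-1$ rows vanishes:
\[
\tfrac14H_\lambda(Q)+\tfrac12H_\lambda(R)+\tfrac14H_\lambda(S)=0,
\]
with overall sign $(-1)^\varepsilon$. One could invoke \eqref{eq:gl4-sl4-coefficient-relation}, but that only controls $\chi^0+\chi^1$, and here the remaining piece appears with opposite signs in $\chi^0$ and $\chi^1$. So it must be shown separately. I expect this to be the main obstacle.

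My first attempt would use the representatives $Q=\operatorname{diag}(1,-1,T_3^\top)$ and $S=\operatorname{diag}(1,-1,T_6^\top)$. These satisfy $S=-Q$ up to conjugation, so $H_\lambda(S)=(-1)^{|\lambda|}H_\lambda(Q)=H_\lambda(Q)$, because $|\lambda|$ is even in this parity pattern. Similarly $-R$ is conjugate to $R$, so $R$ imposes no extra constraint, and the required identity becomes $H_\lambda(Q)+H_\lambda(R)=0$. I would prove this by substituting the trace series
\[
\frac{1}{(1-t^2)(1+t+t^2)}\qquad\text{and}\qquad\frac{1}{(1-t^2)(1+t^2)}
\]
into the Jacobi--Trudi determinant \eqref{eqn-Jacobi-Trudi}, and checking on residues modulo $12$ that the two determinants are negatives of each other for $(\lambda_1,\lambda_2,\lambda_3)\equiv(1,0,1)\pmod 2$.

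A more conceptual alternative is available. Each of $Q$ and $R$ has eigenvalues $\pm1$ together with a conjugate pair, so its trace on $\mathcal{M}_\lambda$ could be computed by the Weyl character formula at a singular element. The Weyl numerator might then vanish or antisymmetrize under the odd-parity hypothesis. If neither argument gives a clean proof, the fallback is the finite verification over the $6^3$ odd--even--odd residue classes, which is the same kind of computation as in Theorem~\ref{thm:sl4-residue-formulas}.
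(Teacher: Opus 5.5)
Your proposal is correct and follows the paper's proof: the Lyndon--Hochschild--Serre argument handles $\ell_1\not\equiv\ell_3\pmod 2$, and in the all-odd case the determinant-one rows contribute half of the vanishing $\mathrm{SL}_4$ sum, leaving the rows $Q,R,S$ to be settled by Jacobi--Trudi substitution of their trace series. The only difference is that you aim for the combined identity $H_\lambda(Q)+H_\lambda(R)=0$, whereas the paper asserts that each of $H_\lambda(Q),H_\lambda(R),H_\lambda(S)$ vanishes individually; your Weyl-character idea proves exactly this at once, because these elements are regular rather than singular (each has four distinct eigenvalues, including $1$ and $-1$), and for $(\lambda_1,\lambda_2,\lambda_3,\lambda_4)\equiv(1,0,1,0)\pmod 2$ every exponent $\lambda_j+4-j$ is even, so the rows of the Weyl numerator $\det\bigl(x_i^{\lambda_j+4-j}\bigr)$ at $x_i=1$ and $x_i=-1$ coincide while the Vandermonde denominator is nonzero.
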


\begin{proof}
The action of the central element \(-I_4\) on the coefficient system is multiplication by $(-1)^{\ell_1+\ell_3}.$ The determinant twist does not change this action.  If \(\ell_1\not\equiv\ell_3\pmod2\), the same Lyndon--Hochschild--Serre argument used in Proposition~\ref{prop:sl4-parity-vanishing} gives the first
vanishing assertion.

Suppose that \(\ell_1,\ell_2,\ell_3\) are all odd.  The contribution of the determinant-one terms in Table~\ref{torsion-gl4} is one half of the corresponding contribution of $\mathrm{SL}_4(\Z)$, and it vanishes by the trace identity~\eqref{eq:all-odd-trace-identity}. For each of the determinant $-1$ terms \(Q,R,S\), direct substitution of its complete-symmetric trace series into the Jacobi--Trudi determinant~\eqref{eqn-Jacobi-Trudi} gives \(H_\lambda(A)=0\). Multiplication by \(\det(A)^\varepsilon\) therefore does not alter the conclusion.
\end{proof}

\begin{thm}[Residue-class formulas]\label{thm:gl4-residue-formulas}
Fix \(\varepsilon\in\{0,1\}\).  For every residue triple allowed by Proposition~\ref{prop:gl4-parity-vanishing}, there is a polynomial
\[
 Q^\varepsilon_{r_1,r_2,r_3}(m_1,m_2,m_3)\in\mathbb{Q}[m_1,m_2,m_3]
\]
of total degree at most two such that
\[
 \chi^\varepsilon_{(\ell_1,\ell_2,\ell_3)}=Q^\varepsilon_{r_1,r_2,r_3}(m_1,m_2,m_3)
\]
under~\eqref{eq:gl4-residue-notation}.  For each determinant parity
there are \(648\) potentially nonzero residue triples, hence
\(1{,}296\) formulas in total.  They are organized into four parity
families and twenty-four \(r_3\)-cases in
Appendix~\ref{app:gl4-residue-tables}.
\end{thm}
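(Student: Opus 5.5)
The plan follows the proof of Theorem~\ref{thm:sl4-residue-formulas}, now starting from the weighted sum \eqref{eq:gl4-general-weighted-sum} over the eighteen block types of Table~\ref{torsion-gl4}. Rows with zero weight ($A,F,J,M,P$) are dropped. The remaining rows fall into two groups.

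The first group is the ten determinant-one rows $B,C,D,E,G,H,I,K,L,N$. For these, $\det(A)^\varepsilon=1$, and each weight $W_{\widetilde\Gamma}(A)$ is exactly one half of the combined $\mathrm{SL}_4(\mathbb Z)$ weight (summing $K_\pm$, $L_\pm$, $N_\pm$ where they split). So their total contribution is $\tfrac12\chi_{(\ell_1,\ell_2,\ell_3)}$. By Theorem~\ref{thm:sl4-residue-formulas} this is already a polynomial of degree at most two on each residue triple.

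The second group is the three determinant $-1$ rows $Q,R,S$, which contribute $(-1)^\varepsilon\sum_{A\in\{Q,R,S\}}W_{\widetilde\Gamma}(A)H_\lambda(A)$. Consequently
\[
\chi^\varepsilon_{(\ell_1,\ell_2,\ell_3)}=\tfrac12\chi_{(\ell_1,\ell_2,\ell_3)}+(-1)^\varepsilon\Delta_{(\ell_1,\ell_2,\ell_3)},
\]
where $\Delta$ denotes the $Q,R,S$ contribution. It therefore suffices to show that $\Delta$ is a polynomial of degree at most two on each residue triple modulo $12$.

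For each of $Q,R,S$, I would take block representatives $\operatorname{diag}(1,-1,T)$ with $T\in\{T_3,T_4,T_6\}$ and compute the trace series $\sum_m H_m(A)t^m=1/\bigl((1-t)(1+t)\Phi_d(t)\bigr)$ for $d=3,4,6$. The denominator has only simple poles at roots of unity of order dividing $12$. Hence $H_m(A)$ is periodic modulo $12$, with no linear term; for example, for $R$ the denominator is $(1-t^4)$, so $H_m(R)$ is $4$-periodic.

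Substituting $\ell_i=12m_i+r_i$ into the Jacobi--Trudi determinant \eqref{eqn-Jacobi-Trudi} shifts each index $\lambda_j+k$ by a multiple of $12$. So every entry of the determinant is constant on a residue triple, and $H_\lambda(A)$ for $A\in\{Q,R,S\}$ is constant there, of degree $0$. Thus $\Delta$ is constant on each residue triple, and $Q^\varepsilon_{r_1,r_2,r_3}=\tfrac12Q_{r_1,r_2,r_3}+(-1)^\varepsilon c_{r_1,r_2,r_3}$ has degree at most two.

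For the count of triples, Proposition~\ref{prop:gl4-parity-vanishing} makes the allowed residue triples the same $648$ as in the $\mathrm{SL}_4$ case for each $\varepsilon$, giving $1{,}296$ formulas in total. The $432$ even--arbitrary--even triples come from $6\cdot12\cdot6$, and the $216$ odd--even--odd triples from $6\cdot6\cdot6$. Grouping these by the parity of $(r_1,r_2)$ and by $r_3$ gives the four parity families and the twenty-four $r_3$-cases of the appendix.

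The main obstacle is not conceptual but bookkeeping. I need to verify that the half-weight relation holds row by row: comparing Tables~\ref{torsion-sl4-weights} and~\ref{torsion-gl4}, for instance $1/18=2\cdot 1/36$ and $2\cdot 1/24=2\cdot(1/24)$ for $K$, and so on. I also need to evaluate the constants $c_{r_1,r_2,r_3}$ from the $12$-periodic traces of $Q,R,S$ across all $648$ triples. As a consistency check, I would verify that for $\ell_1=\ell_3=0$ the result reproduces \eqref{gengl4} and \eqref{gengl4-odd}.
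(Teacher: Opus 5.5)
Your argument is correct, but it is organized differently from the paper's. The paper treats all eighteen rows of Table~\ref{torsion-gl4} uniformly. It notes that every $H_m(A)$ is affine on residue classes modulo $12$, so each of the six Jacobi--Trudi products is polynomial on a residue triple, and the bound of degree at most two is then read off from the finite simplification.

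You instead split $\chi^\varepsilon=\tfrac12\chi+(-1)^\varepsilon\Delta$ and take polynomiality and the degree bound for $\tfrac12\chi$ from Theorem~\ref{thm:sl4-residue-formulas}. You then show that $\Delta$ is constant on each residue triple. This holds because the trace series $1/((1-t^2)\Phi_d(t))$, $d=3,4,6$, has only simple poles, so $H_m(Q)$, $H_m(R)$, $H_m(S)$ are purely periodic.

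Your route yields something sharper than the theorem. The difference $Q^0_{r_1,r_2,r_3}-Q^1_{r_1,r_2,r_3}=2c_{r_1,r_2,r_3}$ is a constant, so the untwisted and twisted residue polynomials share their quadratic and linear parts. Both $\mathrm{GL}_4$ tables are therefore determined by the $\mathrm{SL}_4$ table together with $648$ constants. It also explains why the determinant $-1$ rows cannot raise the degree.

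The paper's route, in exchange, is self-contained and directly produces the explicit polynomials that populate the appendix. Yours moves all the quadratic content onto Theorem~\ref{thm:sl4-residue-formulas}, whose degree bound is itself obtained from the same kind of finite computation.

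A few small points:
\begin{itemize}
\item The row-by-row half-weight check is unnecessary. Each determinant-one row enters $\chi^0$ and $\chi^1$ with the same sign, so \eqref{eq:gl4-sl4-coefficient-relation} already shows that their combined contribution is $\tfrac12\chi$.
\item Your claim that every Jacobi--Trudi entry is constant on a residue triple needs the convention $H_{-1}=H_{-2}=0$ to agree with the periodic extension of $H_m$. This matters when $\lambda_3\in\{0,1\}$ or $\lambda_2=0$. It does agree: for $1/D(t)$ with $D(0)=1$ and $\deg D=4$, the coefficient quasi-polynomial vanishes at $m=-1,-2,-3$. The paper leaves the same point implicit.
\item The symmetric-power check corresponds to $\ell_2=\ell_3=0$, not $\ell_1=\ell_3=0$.
\item The four appendix families are indexed by the parities of $\ell_1$ and $\varepsilon$, not of $(r_1,r_2)$.
\end{itemize}
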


\begin{proof}
Combine the weights in Table~\ref{torsion-gl4} with
\eqref{eq:gl4-general-weighted-sum} and the Jacobi--Trudi
formula~\eqref{eqn-Jacobi-Trudi}.  For each  row, the
sequence \(H_m(A)\) is affine on residue classes modulo 12.
After substituting~\eqref{eq:gl4-residue-notation}, each of the six
terms in the Jacobi--Trudi determinant is therefore polynomial on a
fixed residue triple.  The finite simplification gives the
degree-at-most-two formulas recorded in the appendix.  There are
\(432\) even--arbitrary--even and \(216\) odd--even--odd triples for
each value of \(\varepsilon\), which proves the stated count.
\end{proof}

\subsection{Rational generating functions}

Define
\begin{align*}
 P_1(x,y,z)
 =\sum_{a,b,c\geq0}
 \chi^0_{(2a,b,2c)}x^{2a}y^bz^{2c}
 +\sum_{a,b,c\geq0}
 \chi^0_{(2a+1,2b,2c+1)}x^{2a+1}y^{2b}z^{2c+1},\\
 P_2(x,y,z)
 =\sum_{a,b,c\geq0}
 \chi^1_{(2a,b,2c)}x^{2a}y^bz^{2c}
 +\sum_{a,b,c\geq0}
 \chi^1_{(2a+1,2b,2c+1)}x^{2a+1}y^{2b}z^{2c+1}.
\end{align*}
Let
\[
 \mathcal{I}
 =\{(a,b)\in\{0,1,\ldots,10\}^2:a\equiv b\pmod2\}.
\]
The \(61\) polynomials \(g_{a,b}(y)\), indexed by
\((a,b)\in\mathcal{I}\), are recorded in
Table~\ref{tab:gl4-numerator}.

\begin{thm}\label{teo-gl4-euler}
The untwisted generating series is
\begin{equation}\label{eq:gl4-rational-generating-function}
 P_1(x,y,z)
 =\frac{\displaystyle
   \sum_{(a,b)\in\mathcal{I}}g_{a,b}(y)x^az^b}
  {\displaystyle
   \prod_{i=1}^3(1-x^{2i})(1-z^{2i})
   (1-y^4)(1-y^6)(1-y^{12})}.
\end{equation}
Moreover,
\begin{equation}\label{eq:gl4-sl4-generating-relation}
 P(x,y,z)=P_1(x,y,z)+P_2(x,y,z),
\end{equation}
where \(P(x,y,z)\) is the \(\mathrm{SL}_4(\mathbb{Z})\) series in
Theorem~\ref{teo-euler-sl4}.
\end{thm}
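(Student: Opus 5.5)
The plan is to mirror the proof of Theorem~\ref{teo-euler-sl4}, now using the weights of Table~\ref{torsion-gl4} with $\varepsilon=0$. By \eqref{eq:gl4-general-weighted-sum}, and because Proposition~\ref{prop:gl4-parity-vanishing} shows that only the even--arbitrary--even and odd--even--odd patterns survive, we can write
\[
P_1(x,y,z)=\sum_{A}W_{\widetilde{\Gamma}}(A)\,F_A(x,y,z),
\]
where $A$ runs over the eighteen rows of Table~\ref{torsion-gl4} and $F_A$ is the three-variable trace series from the proof of Theorem~\ref{teo-euler-sl4}. Restricting $F_A$ to the surviving parity patterns changes nothing. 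The vanishing in the other patterns holds for the total weighted sum, so we sum all patterns and use the fact that the disallowed ones contribute zero in aggregate.

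For the determinant-one rows $B,\dots,P$ we already have $F_A$ from the $\mathrm{SL}_4$ computation. Each of $B,C,D,E,G,H,I$ has $\mathrm{GL}_4$ weight equal to exactly half its $\mathrm{SL}_4$ weight. Rows $K,L,N$ have weight $W(K_+)=W(K_-)$ and its analogues, which is again half the combined $\mathrm{SL}_4$ weight $W(K_+)+W(K_-)$, and $K_\pm$ have the same traces. So the determinant-one part of $P_1$ equals $\tfrac12 P$. This is the generating-function form of the remark in Subsection~\ref{ss:passage-GLtoSL}: each type is multiplied by $1+\det(A)$.

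The remaining work is the three determinant $-1$ rows $Q,R,S$. Their symmetric-power series are
\[
Q_A(t)=\frac{1}{(1-t)(1+t)\Phi_d(t)}, \qquad d\in\{3,4,6\}.
\]
Their coefficients are affine on residue classes modulo $12$, with linear growth coming from the single pole of order two shared at $t=\pm1$. We expand the Jacobi--Trudi determinant \eqref{eqn-Jacobi-Trudi} into its six terms. Each term is summed over $(\ell_1,\ell_2,\ell_3)$ with the three elementary series identities. We then check that the denominators divide
\[
D(x,y,z)=\prod_{i=1}^3(1-x^{2i})(1-z^{2i})\,(1-y^4)(1-y^6)(1-y^{12}).
\]
With this in hand, $D\cdot P_1 = \tfrac12 D\cdot P + D\sum_{A\in\{Q,R,S\}}W_{\widetilde{\Gamma}}(A)F_A$ is a polynomial. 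We read off its $x^az^b$ coefficients, confirm that the support lies in $\mathcal I$, and match them with $g_{a,b}(y)$ in Table~\ref{tab:gl4-numerator}. As a consistency check, setting $x=z=0$ should reproduce the specialization of \eqref{gengl4} to $\ell_1=\ell_3=0$ in the $y$-variable (that is, the $\omega_2$ direction, not symmetric powers). A cleaner check is to specialize along the $\mathrm{Sym}^m$ line, $y=z=0$, which should give \eqref{gengl4}.

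The relation \eqref{eq:gl4-sl4-generating-relation} then follows coefficientwise from \eqref{eq:gl4-sl4-coefficient-relation}. Both sides are indexed over the same set of parity patterns, and $\chi_{(\ell)}=\chi^0_{(\ell)}+\chi^1_{(\ell)}$ for every triple. Equivalently, $P_2=\tfrac12P-\sum_{Q,R,S}W_{\widetilde{\Gamma}}(A)F_A$.

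The main obstacle is the bookkeeping for $F_Q,F_R,F_S$. We have to verify that, after cancellation, no denominator factors beyond those of $D$ survive. In particular, the factors in $y$ must collapse to $(1-y^4)(1-y^6)(1-y^{12})$ rather than something like $(1-y^2)^2$. We also have to confirm the explicit numerators. We plan to handle this with the same finite computer-algebra calculation used for Theorem~\ref{teo-euler-sl4}, done separately on each of the $12^3$ residue classes. Cross-checking against the $648$ residue polynomials of Theorem~\ref{thm:gl4-residue-formulas} then detects any error in the transcribed $g_{a,b}$.
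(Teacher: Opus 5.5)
Your proposal is correct. It uses the same finite Jacobi--Trudi computation as the paper, but splits the sum differently. The paper forms $\sum_A W_{\widetilde\Gamma}(A)F_A$ over all eighteen rows of Table~\ref{torsion-gl4} at once and reads off $D\cdot P_1$. It then obtains \eqref{eq:gl4-sl4-generating-relation} coefficientwise from \eqref{eq:gl4-sl4-coefficient-relation}. You instead observe that the determinant-one rows contribute exactly $\tfrac12P$: the weights halve, and $H_\lambda(K_+)=H_\lambda(K_-)$ (likewise for $L_\pm,N_\pm$) because these pairs are $\mathrm{GL}_4(\mathbb{C})$-conjugate. Hence $P_1=\tfrac12P+X$ and $P_2=\tfrac12P-X$ with $X=\tfrac14F_Q+\tfrac12F_R+\tfrac14F_S$. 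This reuses Theorem~\ref{teo-euler-sl4}, reduces the new work to three rows, and makes $P=P_1+P_2$ tautological. The split agrees with the tables. At $x=z=0$ one finds $X(0,y,0)=\tfrac12\frac{1+y^2}{1-y^6}+\tfrac12\frac{1}{1-y^4}$, and indeed $D(0,y,0)\,X(0,y,0)+\tfrac12 f_{0,0}(y)=g_{0,0}(y)$.

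One claim in your proposal is wrong, and correcting it removes most of your ``main obstacle.'' The series $1/\bigl((1-t^2)\Phi_d(t)\bigr)$ for $d\in\{3,4,6\}$ have only simple poles, since $\Phi_d(\pm1)\neq0$. There is no double pole at $t=\pm1$ and no linear growth. The coefficients are bounded and purely periodic:
\begin{itemize}
\item $H_m(R)=1$ for $4\mid m$ and $0$ otherwise;
\item $H_m(Q)$ repeats $1,-1,1,0,0,0$;
\item $H_m(S)$ repeats $1,1,1,0,0,0$.
\end{itemize}
These periodic extensions vanish at $m=-1,-2$, so they agree with the convention $H_j=0$ for $j<0$ in \eqref{eqn-Jacobi-Trudi}. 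Consequently $H_\lambda(Q)$ and $H_\lambda(S)$ are periodic with period $6$ in each $\ell_i$, and $H_\lambda(R)$ with period $4$. So $F_Q$ and $F_S$ have denominators dividing $(1-x^6)(1-y^6)(1-z^6)$, and $F_R$ has denominator dividing $(1-x^4)(1-y^4)(1-z^4)$. Both divide $D$, so no factor like $(1-y^2)^2$ can occur. Only the numerators and the support bound $0\leq a,b\leq 10$ are left for the computation.

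Also, your first consistency check at $x=z=0$ is unrelated to \eqref{gengl4}. Only the specialization $y=z=0$ recovers it.
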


\begin{proof}
For a block representative \(A\), let us write 
\[
 Q_A(t)=\sum_{m\geq0}H_m(A)t^m
       =\frac1{\det(I-tA^{-1})}.
\]
Expanding the Jacobi--Trudi determinant expresses the corresponding
three-variable trace series as a sum of six products of shifted
coefficient sequences of \(Q_A(t)\).  Since each \(Q_A(t)\) is a
reciprocal product of cyclotomic polynomials of orders dividing
12, those coefficient sequences are affine on residue classes
modulo 12.  Summing them with the elementary series for
\(1,n,n^2\), and then taking the weighted sum in
\eqref{eq:gl4-general-weighted-sum}, gives the common denominator
\[
 D(x,y,z)=
 \prod_{i=1}^3(1-x^{2i})(1-z^{2i})
 (1-y^4)(1-y^6)(1-y^{12}).
\]
The same finite calculation shows that
\(D(x,y,z)P_1(x,y,z)\) has support
\(0\leq a,b\leq10\) with \(a\equiv b\pmod2\), and its coefficient
of \(x^az^b\) is the polynomial \(g_{a,b}(y)\) in
Table~\ref{tab:gl4-numerator}.  This proves
\eqref{eq:gl4-rational-generating-function}.  Finally,
\eqref{eq:gl4-sl4-generating-relation} follows coefficientwise from
the index-two identity~\eqref{eq:gl4-sl4-coefficient-relation}.
\end{proof}

As a check, set
\[
 D_x=\prod_{i=1}^3(1-x^{2i}).
\]
Then \(y=z=0\) gives
\begin{align*}
 D_xP_1(x,0,0)
 &=g_{0,0}(0)+g_{2,0}(0)x^2+g_{4,0}(0)x^4 + g_{6,0}(0)x^6+g_{8,0}(0)x^8+g_{10,0}(0)x^{10}\\
 &=1-x^2-x^4+x^8,\\
 P_1(x,0,0)
 &=\frac{1-x^4-x^6}{1-x^4-x^6+x^{10}},
\end{align*}
which recovers~\eqref{gengl4}.  

\begin{coro}\label{cor:gl4-quasipolynomial}
For each fixed determinant parity \(\varepsilon\), the function $(\ell_1,\ell_2,\ell_3)\longmapsto\chi^\varepsilon_{(\ell_1,\ell_2,\ell_3)}$ is a quasi-polynomial of total degree at most two and period \(12\) on \(\mathbb{Z}_{\geq0}^3\).
\end{coro}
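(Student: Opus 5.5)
The upper bound on the period and the degree bound come directly from Theorem~\ref{thm:gl4-residue-formulas}. For fixed $\varepsilon$, on every residue triple $(r_1,r_2,r_3)$ modulo $12$ the function $\chi^\varepsilon_{(\ell_1,\ell_2,\ell_3)}$ equals a polynomial $Q^\varepsilon_{r_1,r_2,r_3}$ of total degree at most two. On the residue triples excluded by Proposition~\ref{prop:gl4-parity-vanishing} it is identically zero. Hence $\chi^\varepsilon$ is a quasi-polynomial of total degree at most two whose least common period divides $12$. The real content is minimality: the period is neither $1,2,3,4$ nor $6$. Since every proper divisor of $12$ divides $4$ or $6$, it suffices to exhibit a one-variable restriction that is not periodic-polynomial modulo $4$ or modulo $6$.

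As in Corollary~\ref{cor:sl4-quasipolynomial}, I restrict to $\ell_1=\ell_3=0$ and study $b\mapsto\chi^\varepsilon_{(0,b,0)}$. The criterion is that its generating series in $y$ has a genuine pole at $\zeta=e^{\pi i/6}$. A quasi-polynomial whose period divides $4$ or $6$ has a generating function with poles only at $4$th or $6$th roots of unity.

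Rather than reading off $g_{0,0}(y)$ and the corresponding numerator of $P_2$, I would split \eqref{eq:gl4-general-weighted-sum} into the determinant-one rows and the rows $Q,R,S$.

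\textbf{The rows $Q,R,S$.} These have eigenvalues in $\{1,-1,\omega^{\pm1}\}$, $\{1,-1,\pm i\}$ and $\{1,-1,-\omega^{\pm1}\}$. Hence $H_\lambda(A)$, being a Schur polynomial in these eigenvalues, has generating function $\sum_b H_{(b,b,0,0)}(A)y^b$ whose poles are all $6$th or $4$th roots of unity. Concretely, the series $Q_A(t)$ has denominators built from $\Phi_1,\Phi_2,\Phi_3,\Phi_4,\Phi_6$ only, and the Jacobi--Trudi expansion preserves the set of possible poles. So these rows contribute nothing to the principal part at $\zeta$.

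\textbf{The determinant-one rows.} Their weights $W_{\widetilde\Gamma}(A)$ in Table~\ref{torsion-gl4} are exactly half the combined $\mathrm{SL}_4(\mathbb{Z})$ weights of Table~\ref{torsion-sl4-weights}, with $K_\pm,L_\pm,N_\pm$ merged. Their total contribution is therefore $\tfrac12\chi_{(0,b,0)}$, independently of $\varepsilon$, because $\det(A)^\varepsilon=1$.

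Consequently, for both $\varepsilon=0$ and $\varepsilon=1$, the principal part of $\sum_b\chi^\varepsilon_{(0,b,0)}y^b$ at $y=\zeta$ is half that of $\sum_b\chi_{(0,b,0)}y^b$. The proof of Corollary~\ref{cor:sl4-quasipolynomial} shows the latter is nonzero: the numerator $y^2+y^4-4y^9+y^{14}+y^{16}$ takes the value $(4+2\sqrt3)i$ at $\zeta$, while only the factor $1-y^{12}$ vanishes there. So each $b\mapsto\chi^\varepsilon_{(0,b,0)}$ has minimal period exactly $12$, and therefore so does the three-variable quasi-polynomial.

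The main point needing care is the claim that the $Q,R,S$ traces introduce no primitive $12$th-root poles. I would justify it by noting that each $H_\lambda(A)$ is a finite sum of products of coefficients of $Q_A(t)=\det(I-tA^{-1})^{-1}$. For these $A$, $\det(I-tA^{-1})$ is a product of $\Phi_1,\Phi_2,\Phi_3,\Phi_4,\Phi_6$ (up to the substitution $t\mapsto 1/t$, which only permutes roots of unity of each order). Each such coefficient sequence is therefore quasi-polynomial with period dividing $6$ or $4$, and so are products and sums of them along the diagonal $\lambda=(b,b,0,0)$. A quasi-polynomial of period $p$ has a rational generating function whose poles are $p$-th roots of unity, which gives the claim.
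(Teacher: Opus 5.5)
Your proof is correct, but it establishes minimality of the period by a different route from the paper.

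The paper reads off the tabulated numerators. It checks $g_{0,0}(\zeta)=(2+\sqrt3)i$ and $(f_{0,0}-g_{0,0})(\zeta)=(2+\sqrt3)i$, so that $P_1(0,y,0)$ and $P_2(0,y,0)$ each have a pole at $\zeta=e^{\pi i/6}$.

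You instead split the weighted sum \eqref{eq:gl4-general-weighted-sum} by determinant, which gives
\[
\chi^\varepsilon_\lambda=\tfrac12\chi_\lambda+(-1)^\varepsilon\sum_{A\in\{Q,R,S\}}W_{\widetilde\Gamma}(A)H_\lambda(A)
\]
for every $\lambda$. The determinant-one half is exactly the $1+\det(A)$ mechanism of Subsection~\ref{ss:passage-GLtoSL}, and it matches the tables once $K_\pm,L_\pm,N_\pm$ are merged. The rows $Q,R,S$ involve only eigenvalues of order dividing $4$ or $6$, so each of them separately has no pole at a primitive twelfth root of unity. You correctly argue row by row through pole locations rather than through periods: the combined $Q,R,S$ term can have period $\operatorname{lcm}(4,6)=12$.

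What your route buys: it needs only the $\mathrm{SL}_4$ numerator $f_{0,0}$, not the separately computed Table~\ref{tab:gl4-numerator}. It also explains why the paper's two evaluations coincide, since both equal $\tfrac12 f_{0,0}(\zeta)$. More generally, it shows the primitive-twelfth-root behaviour of $\chi^\varepsilon$ is independent of $\varepsilon$.

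What the paper's route buys: a one-line numerical check, given the tables, which also serves as a consistency test of the $g_{a,b}$ data.
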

\begin{proof}
Theorem~\ref{thm:gl4-residue-formulas} shows that both functions are quasi-polynomials of total degree at most two with period dividing $12$. As in the proof of Corollary~\ref{cor:sl4-quasipolynomial}, it remains to exclude a smaller period. Let $\zeta=e^{\pi i/6}$. From the numerator polynomials of the generating functions, we have
\[
g_{0,0}(\zeta)=(2+\sqrt3)i\neq0 \quad \text{and} \quad \bigl(f_{0,0}-g_{0,0}\bigr)(\zeta) =(2+\sqrt3)i\neq0.
\]
Thus both $P_1(0,y,0)$ and $P_2(0,y,0)$ have a pole at a primitive
twelfth root of unity. Therefore, for each determinant parity, the
minimal common period is $12$.
\end{proof}

{ \begin{center}
\tiny\renewcommand{\arraystretch}{2.4}
{\begin{longtable}{@{}c p{0.84\textwidth}@{}}
\caption{Numerator polynomials for the
$\mathrm{GL}_4(\mathbb{Z})$ generating function.}
\label{tab:gl4-numerator}\\
\hline
$(a,b)$&$g_{a,b}(y)$\\
\hline
\endfirsthead
\multicolumn{2}{c}{\tablename~\thetable\ (continued)}\\
\hline
$(a,b)$&$g_{a,b}(y)$\\
\hline
\endhead
(0,0)&$1
 + y^2
 - y^6
 - 2\*y^9
 - y^{12}
 + y^{16}
 + y^{18} $\\
\hline
(0,2)&$-1
 - y^2
 + 2\*y^6
 - y^7
 + 2\*y^9
 + y^{10}
 + y^{12}
 - y^{13}
 - y^{15}
 - y^{16}
 - y^{18}
 + y^{19}$\\
\hline 
(0,4)& $
-1
 - y^2
 + y^4
 - y^5
 + y^7
 + y^9
 + y^{12}
 + y^{13}
 + y^{15}
 - y^{16}
 - y^{18}
 - y^{19}
$
\\
\hline
(0,6)&$y^2
 - y^3
 - y^4
 + y^5
 - y^6
 + y^7
 - y^{10}
 + y^{13}
 + y^{15}
 - y^{19}$\\
\hline
(0,8)&$1
 - y
 + y^3
 + y^5
 - y^9
 - 2\*y^{10}
 - y^{12}
 + y^{16}
 + y^{18}$ \\
\hline
(0,10)& $y
 - y^5
 - y^7
 + 2\*y^{10}
 - y^{13}
 - y^{15}
 + y^{19}$\\
\hline
(1,1)& $-y^8
 + y^{10}
 + y^{18}
 - y^{20}$\\
\hline
(1,3)& $y^8
 - y^{12}
 - y^{14}
 + y^{16}
 - y^{18}
 + y^{20}$\\
\hline
(1,5)& $-y^{10}
 + y^{12}
 + 2\*y^{14}
 - 2\*y^{16}
 - y^{18}
 + y^{20}$\\
\hline
(1,7)& $-1
 + y^4
 + y^6
 - 2\*y^{10}
 + 2\*y^{12}
 - y^{14}$\\
\hline
(1,9)& $1
 - y^4
 - y^6
 + 2\*y^{10}
 - 2\*y^{12}
 + y^{16}
 + y^{18}
 - y^{20}$\\
\hline
(2,0)& $-1
 - y
 - y^2
 - y^3
 + 2\*y^6
 + y^7
 + 2\*y^9
 + y^{10}
 + y^{12}
 - y^{16}
 - y^{18}
 - y^{19}$\\
\hline
(2,2)& $1
 + y
 + y^2
 + y^3
 + y^4
 - 3\*y^6
 - 2\*y^7
 - 2\*y^9
 - y^{10}
 - y^{12}
 + y^{13}
 + y^{15}
 + y^{16}
 + y^{18}$\\
\hline
(2,4)& $1
 + y
 + 2\*y^2
 + y^3
 - 2\*y^4
 - y^6
 - y^9
 - y^{10}
 - y^{11}
 - y^{12}
 - 2\*y^{13}
 - y^{15}
 + y^{16}
 + y^{17}
 + y^{18}
 + 2\*y^{19}$\\
\hline
(2,6)& $-2\*y^2
 + y^4
 + 2\*y^6
 - y^{10}
 + y^{11}
 - y^{15}
 - y^{17}
 + y^{19}$\\
\hline
(2,8)&$
-1
 - y
 - y^3
 + y^7
 - y^8
 + y^9
 + 4\*y^{10}
 + y^{11}
 + y^{12}
 + y^{13}
 - y^{14}
 - 2\*y^{16}
 - y^{17}
 - y^{18}
 - y^{19}
 + y^{20}
$\\
\hline
(2,10)&$
y^8
 - 2\*y^{10}
 - y^{11}
 + y^{14}
 + y^{15}
 + y^{16}
 + y^{17}
 - y^{19}
 - y^{20}
$\\
\hline
(3,1)& $y^8
 - y^{12}
 - y^{14}
 + y^{16}
 - y^{18}
 + y^{20}$\\
\hline
(3,3)& $-y^6
 - y^8
 + 2\*y^{12}
 + 3\*y^{14}
 - 2\*y^{16}
 - y^{20}$\\
\hline
(3,5)& $-1
 + y^4
 + 2\*y^6
 - 2\*y^{10}
 - 2\*y^{14}
 + y^{16}
 + 2\*y^{18}
 - y^{20}$\\
\hline
(3,7)&$
2
 - 2\*y^4
 - 2\*y^6
 + 4\*y^{10}
 - 2\*y^{12}
 - y^{14}
 + y^{18}
$\\
\hline
(3,9)&$
-1
 + y^4
 + y^6
 - 2\*y^{10}
 + y^{12}
 + y^{14}
 - 2\*y^{18}
 + y^{20}
$\\
\hline
(4,0)& $-1
 - y^2
 + y^4
 + y^9
 + y^{12}
 + y^{13}
 + y^{15}
 - y^{16}
 - y^{17}
 - y^{18}$\\
\hline
(4,2)& $1
 + 2\*y^2
 - 2\*y^4
 - y^6
 + 2\*y^7
 - y^9
 - y^{10}
 - y^{11}
 - y^{12}
 - y^{13}
 + y^{16}
 + y^{17}
 + y^{18}$\\
 \hline
(4,4)& $1
 - y^2
 + 2\*y^6
 - 2\*y^7
 - y^{10}
 + 2\*y^{11}
 - y^{12}
 - 2\*y^{15}
 + y^{16}
 + y^{18}$\\
\hline
(4,6)& $y^3
 + y^4
 - y^6
 - y^7
 - y^8
 - y^9
 + 4\*y^{10}
 - y^{11}
 - y^{13}
 - y^{14}
 - y^{16}
 + y^{17}
 + y^{20}$\\
 \hline
 (4,8)&$
 -1
 + y
 - y^3
 - y^5
 - y^6
 + 2\*y^8
 + y^9
 - y^{10}
 + y^{12}
 + 2\*y^{14}
 + y^{16}
 - y^{18}
 - 2\*y^{20}
 $\\
 \hline
(4,10)&$
-y
 + y^5
 + y^6
 + y^7
 - y^8
 - y^{10}
 + y^{13}
 - y^{14}
 + y^{15}
 - y^{16}
 - y^{17}
 + y^{20}
$\\ 
\hline
(5,1)& $-y^{10}
 + y^{12}
 + 2\*y^{14}
 - 2\*y^{16}
 - y^{18}
 + y^{20}$\\
 \hline
(5,3)& $-1
 + y^4
 + 2\*y^6
 - 2\*y^{10}
 - 2\*y^{14}
 + y^{16}
 + 2\*y^{18}
 - y^{20}$\\
\hline
(5,5)& $2
 - 3\*y^4
 - 3\*y^6
 + y^8
 + 6\*y^{10}
 - 2\*y^{12}
 - 3\*y^{14}
 + 3\*y^{16}
 - y^{20}$\\
 \hline
 (5,7)&$
 y^4
 - y^8
 - 2\*y^{10}
 - y^{12}
 + 4\*y^{14}
 + y^{16}
 - 2\*y^{18}
 $\\
 \hline
 (5,9)&$
 -1
 + y^4
 + y^6
 - y^{10}
 + 2\*y^{12}
 - y^{14}
 - 3\*y^{16}
 + y^{18}
 + y^{20}
 $\\
 \hline
 (6,0)&$
 y
 + y^2
 + y^3
 - y^4
 - y^6
 - y^7
 - y^{10}
 - y^{15}
 + y^{17}
 + y^{19}
 $\\
 \hline
 (6,2)&$
 -y
 - 2\*y^2
 - y^3
 + y^4
 + 2\*y^6
 + 2\*y^7
 - y^{10}
 + y^{11}
 + y^{13}
 - y^{17}
 - y^{19}
 $\\
 \hline
 (6,4)&$
 -y
 - y^3
 + y^4
 + 2\*y^5
 - y^6
 - y^8
 - y^9
 + 4\*y^{10}
 - y^{11}
 - y^{14}
 + 2\*y^{15}
 - y^{16}
 - y^{17}
 - y^{19}
 + y^{20}
 $\\
 \hline
 (6,6)&$
 y^2
 - y^4
 - 2\*y^5
 - y^6
 + 2\*y^8
 + 2\*y^9
 - y^{10}
 - 2\*y^{13}
 + 2\*y^{14}
 + 2\*y^{16}
 - 2\*y^{20}
 $\\
\hline
(6,8)&$
2\*y
 + y^3
 - y^4
 - y^5
 + 2\*y^6
 - 2\*y^7
 - y^9
 - y^{10}
 - y^{11}
 + y^{17}
 + y^{19}
$\\
\hline
(6,10)&$-y
 + y^4
 + y^5
 - y^6
 + y^7
 - y^8
 + y^{11}
 + y^{13}
 - y^{14}
 - y^{15}
 - y^{16}
 + y^{20}$\\
 \hline
(7,1)&$
-1
 + y^4
 + y^6
 - 2\*y^{10}
 + 2\*y^{12}
 - y^{14}
$\\
\hline
(7,3)&$
2
 - 2\*y^4
 - 2\*y^6
 + 4\*y^{10}
 - 2\*y^{12}
 - y^{14}
 + y^{18}
$\\
\hline
(7,5)&$
y^4
 - y^8
 - 2\*y^{10}
 - y^{12}
 + 4\*y^{14}
 + y^{16}
 - 2\*y^{18}
$\\
\hline
(7,7)&$
-1
 - y^2
 + 2\*y^6
 + 2\*y^8
 - y^{12}
 - 2\*y^{16}
 + y^{18}
$\\
\hline
(7,9)&$
y^2
 - y^6
 - y^8
 + 2\*y^{12}
 - 2\*y^{14}
 + y^{16}
$\\
\hline
 (8,0)&$
 1
 + y
 + y^3
 - y^7
 - y^9
 - 2\*y^{10}
 - y^{12}
 - 2\*y^{13}
 + y^{16}
 + y^{17}
 + y^{18}
 + y^{19}
 $\\
 \hline
 (8,2)&$
 -1
 - y
 - y^3
 + y^7
 - y^8
 + y^9
 + 4\*y^{10}
 + y^{11}
 + y^{12}
 + y^{13}
 - y^{14}
 - 2\*y^{16}
 - y^{17}
 - y^{18}
 - y^{19}
 + y^{20}
 $\\
 \hline
 (8,4)&$
 -1
 - y
 - y^3
 - y^6
 + y^7
 + 2\*y^8
 + y^9
 - y^{10}
 + y^{12}
 + 2\*y^{13}
 + 2\*y^{14}
 + y^{16}
 - y^{17}
 - y^{18}
 - y^{19}
 - 2\*y^{20}
 $\\
 \hline
 (8,6)&$
 y^3
 - y^4
 + 2\*y^6
 - y^7
 - y^9
 - y^{10}
 - y^{11}
 + 2\*y^{13}
 $\\
 \hline
 (8,8)&$
 1
 - y^2
 + 2\*y^4
 + y^5
 + y^7
 - y^8
 - y^{10}
 - 2\*y^{11}
 - y^{12}
 - 2\*y^{13}
 - y^{14}
 + y^{17}
 + y^{18}
 + y^{19}
 + y^{20}
 $\\
 \hline
 (8,10)&$
 y
 + y^2
 - y^4
 - y^5
 - y^6
 - y^7
 + y^{10}
 + 2\*y^{11}
 - y^{13}
 $\\
 \hline
 (9,1)&$
 1
 - y^4
 - y^6
 + 2\*y^{10}
 - 2\*y^{12}
 + y^{16}
 + y^{18}
 - y^{20}
 $\\
 \hline
 (9,3)&$
 -1
 + y^4
 + y^6
 - 2\*y^{10}
 + y^{12}
 + y^{14}
 - 2\*y^{18}
 + y^{20}
 $\\
 \hline
 (9,5)&$
 -1
 + y^4
 + y^6
 - y^{10}
 + 2\*y^{12}
 - y^{14}
 - 3\*y^{16}
 + y^{18}
 + y^{20}
 $\\
 \hline
 (9,7)&$
 y^2
 - y^6
 - y^8
 + 2\*y^{12}
 - 2\*y^{14}
 + y^{16}
 $\\
 \hline
 (9,9)&$
 1
 - y^2
 - y^4
 + y^8
 + y^{10}
 - 3\*y^{12}
 + 2\*y^{14}
 + y^{16}
 - y^{20}
 $\\
 \hline
(10,0)&$
-y
 - y^3
 + y^7
 + 2\*y^{10}
 + y^{13}
 - y^{17}
 - y^{19}
$\\
\hline
(10,2)&$
y
 + y^3
 - 2\*y^7
 + y^8
 - 2\*y^{10}
 - y^{11}
 - y^{13}
 + y^{14}
 + y^{16}
 + y^{17}
 + y^{19}
 - y^{20}
$\\
\hline
(10,4)&$
y
 + y^3
 - y^5
 + y^6
 - y^8
 - y^{10}
 - y^{13}
 - y^{14}
 - y^{16}
 + y^{17}
 + y^{19}
 + y^{20}
$\\
\hline
(10,6)&$
-y^3
 + y^4
 + y^5
 - y^6
 + y^7
 - y^8
 + y^{11}
 - y^{14}
 - y^{16}
 + y^{20}
$\\
\hline
(10,8)&$
-y
 + y^2
 - y^4
 - y^6
 + y^{10}
 + 2\*y^{11}
 + y^{13}
 - y^{17}
 - y^{19}
$\\
\hline
(10,10)&$
-y^2
 + y^6
 + y^8
 - 2\*y^{11}
 + y^{14}
 + y^{16}
 - y^{20}
$\\
\hline
\end{longtable}}
\end{center}
}

\section{Cohomological consequences}\label{se:cohomological-consequences}

The preceding sections give the homological Euler characteristic for
every irreducible highest-weight coefficient system. These formulas do
not determine the individual cohomology groups, but they still give
useful information about their nonvanishing and their dimensions. We
first recall the basic cohomological setting for
\(\mathrm{SL}_4(\mathbb Z)\). We then consider arbitrary highest
weights and finally specialize to symmetric powers, for which one of
the two \(\mathrm{GL}_4(\mathbb Z)\)-parts is already known degree by
degree.

All cohomology groups and dimensions in this section are taken over
\(\mathbb Q\). Except where an earlier theorem is cited, the arguments
below do not determine whether the cohomology classes are inner,
cuspidal, or Eisenstein.

\subsection{Cohomology of\texorpdfstring{$\mathrm{SL}_4(\mathbb Z)$}{SL4(Z)}}\label{subse:sl4-cohomological-setting}

We briefly recall the cohomological framework in which the Euler characteristics computed above are situated. The cohomology of an arithmetic group connects the geometry of the associated locally symmetric space with the representation theory of the ambient algebraic group and the theory of automorphic forms. A basic problem is to describe this cohomology degree by degree and, in particular, to distinguish its inner, cuspidal, boundary, and Eisenstein parts. We use the same Borel--Serre framework as in our earlier calculations for \(\mathrm{SL}_3(\mathbb Z)\), \(G_2(\mathbb Z)\), and \(\mathrm{Sp}_4(\mathbb Z)\); see~\cite{BHHMG2021,BG2022,BHMG2023}.

Let
\[
 \mathbf G=\mathrm{SL}_4,
 \qquad
 \Gamma=\mathbf G(\mathbb Z)=\mathrm{SL}_4(\mathbb Z),
 \qquad
 K_\infty=\mathrm{SO}(4),
\]
and let
\[
 X=\mathbf G(\mathbb R)/K_\infty,
 \qquad
 S_\Gamma=\Gamma\backslash X.
\]
The symmetric space \(X\) is contractible. Since \(\Gamma\) contains torsion, the quotient \(S_\Gamma\) is naturally regarded as an arithmetic orbifold. Let \(\mathcal M_\lambda\) be a finite-dimensional rational representation of \(\mathbf G\), and let \(\mathcal L_\lambda\) denote the corresponding rational local system on \(S_\Gamma\). Interpreting the cohomology of \(S_\Gamma\) in the orbifold sense, one has a canonical identification
\begin{equation}\label{eq:sl4-group-geometric-cohomology}
 H^q(\Gamma,\mathcal M_\lambda) \simeq H^q(S_\Gamma,\mathcal L_\lambda);
\end{equation}
see, for example, \cite[Chapter~VII]{BoWa}. Thus the group cohomology appearing in this paper may be studied through the geometry of the locally symmetric orbifold \(S_\Gamma\).

Let \(\overline{S}_\Gamma^{\mathrm{BS}}\) be the Borel--Serre compactification of \(S_\Gamma\)~\cite{BorelSerre73}. It is a compact orbifold with corners, and the inclusion
\[
 i:S_\Gamma\hookrightarrow\overline{S}_\Gamma^{\mathrm{BS}}
\]
is a homotopy equivalence. We denote the direct-image extension
\(i_*\mathcal L_\lambda\) again by \(\mathcal L_\lambda\). Then
\[
 H^q(\Gamma,\mathcal M_\lambda) \simeq H^q(\overline{S}_\Gamma^{\mathrm{BS}},\mathcal L_\lambda).
\]
Define
\[
 \partial\overline{S}_\Gamma^{\mathrm{BS}}
 =
 \overline{S}_\Gamma^{\mathrm{BS}}\setminus S_\Gamma.
\]
The long exact sequence associated with the Borel--Serre boundary is
\begin{equation}\label{eq:sl4-borel-serre-long-exact-sequence}
 \begin{aligned}
 \cdots&\longrightarrow
 H_c^q(S_\Gamma,\mathcal L_\lambda)
 \longrightarrow
 H^q(S_\Gamma,\mathcal L_\lambda)
 \xrightarrow{\,r^q\,}
 H^q(\partial\overline{S}_\Gamma^{\mathrm{BS}},
     \mathcal L_\lambda)\\
 &\longrightarrow
 H_c^{q+1}(S_\Gamma,\mathcal L_\lambda)
 \longrightarrow
 H^{q+1}(S_\Gamma,\mathcal L_\lambda)
 \longrightarrow\cdots .
 \end{aligned}
\end{equation}
The group on the right of the restriction map is called the
\emph{boundary cohomology}. The \emph{inner cohomology} is
\[
 H_!^q(S_\Gamma,\mathcal L_\lambda)
 :=
 \operatorname{im}\!\left(
 H_c^q(S_\Gamma,\mathcal L_\lambda)
 \longrightarrow H^q(S_\Gamma,\mathcal L_\lambda)
 \right)
 =
 \ker(r^q),
\]
whereas the \emph{Eisenstein cohomology} is defined here by
\[
 H_{\mathrm{Eis}}^q(S_\Gamma,\mathcal L_\lambda)
 :=
 \operatorname{im}(r^q).
\]
Consequently, there is a short exact sequence
\begin{equation}\label{eq:sl4-inner-total-eisenstein}
 0\longrightarrow
 H_!^q(S_\Gamma,\mathcal L_\lambda)
 \longrightarrow
 H^q(\Gamma,\mathcal M_\lambda)
 \longrightarrow
 H_{\mathrm{Eis}}^q(S_\Gamma,\mathcal L_\lambda)
 \longrightarrow0.
\end{equation}
The cuspidal cohomology, defined in terms of cuspidal automorphic
forms, maps into the inner cohomology. In general, determining the
image of \(r^q\), or deciding when inner and cuspidal cohomology agree,
requires additional automorphic input; see
\cite{Harder87,Harder91}. No such identification will be assumed in
this section.

The Borel--Serre boundary is assembled from faces indexed by the proper
rational parabolic subgroups of \(\mathbf G\). After fixing the
diagonal maximal \(\mathbb Q\)-split torus and the upper-triangular
Borel subgroup, its cohomology is computed by the spectral sequence
\begin{equation}\label{eq:sl4-boundary-spectral-sequence}
 E_1^{p,q}
 =
 \bigoplus_{\substack{P\text{ proper standard}\\
                      \operatorname{prk}(P)=p+1}}
 H^q(\partial_P,\mathcal L_\lambda)
 \quad\Longrightarrow\quad
 H^{p+q}(\partial\overline{S}_\Gamma^{\mathrm{BS}},
         \mathcal L_\lambda),
\end{equation}
where \(\partial_P\) is the boundary face attached to \(P\) and
\(\operatorname{prk}(P)\) is its parabolic rank. If
\(P=L_PN_P\) is a Levi decomposition, the cohomology of
\(\partial_P\) is reduced to the cohomology of an arithmetic subgroup
of \(L_P\), with coefficients coming from the cohomology of the
unipotent radical \(N_P\). The representation-theoretic input is
provided by Kostant's theorem:
\begin{equation}\label{eq:sl4-kostant-boundary-input}
 H^j(\mathfrak n_P,\mathcal M_\lambda)
 \simeq
 \bigoplus_{w\in W^P: \ell(w)=j}
 \mathcal M_{w\cdot\lambda}^{L_P},
 \qquad
 w\cdot\lambda=w(\lambda+\rho)-\rho;
\end{equation}
see~\cite{Kostant61}. Since
\(\rank_{\mathbb Q}\mathbf G=3\), the spectral sequence
\eqref{eq:sl4-boundary-spectral-sequence} has three columns,
\(p=0,1,2\). Thus, unlike the \(\mathbb Q\)-rank-two groups treated
in the three papers cited above, the boundary cohomology of
\(\mathrm{SL}_4(\mathbb Z)\) involves a genuine three-column spectral
sequence.

We finally recall the cohomological range. Since \(\Gamma\) has
torsion, the relevant finite invariant is its \emph{virtual
cohomological dimension}. If \(\Gamma'\subseteq\Gamma\) is any
torsion-free subgroup of finite index, then
\[
 \vcd(\Gamma):=\cd(\Gamma');
\]
this is independent of the choice of \(\Gamma'\). The theorem of
Borel and Serre gives
\begin{equation}\label{eq:sl4-vcd}
 \begin{aligned}
 \vcd(\Gamma)
  &=\dim X-\rank_{\mathbb Q}\mathbf G \\
  &=\dim\mathbf G-\dim K_\infty-\rank_{\mathbb Q}\mathbf G \\
  &=15-6-3=6.
 \end{aligned}
\end{equation}
Because \(\mathbf G=\mathrm{SL}_4\) is \(\mathbb Q\)-split, the same
number is
\[
 \dim N
 =
 \lvert\Phi^+\rvert
 =
 \ell(w_0)
 =
 6,
\]
where \(N\) is the unipotent radical of a Borel subgroup and \(w_0\)
is the longest element of the Weyl group
\(W\simeq\mathfrak S_4\). It follows that, for every finite-dimensional
rational coefficient system \(\mathcal M_\lambda\),
\begin{equation}\label{eq:sl4-vanishing-above-vcd}
 H^q(\Gamma,\mathcal M_\lambda)=0
 \qquad(q>6).
\end{equation}

Accordingly, the homological Euler characteristic considered in this
paper is the finite alternating sum
\[
 \chi_h(\Gamma,\mathcal M_\lambda)
 =
 \sum_{q=0}^{6}(-1)^q
 \dim H^q(\Gamma,\mathcal M_\lambda).
\]
It gives an exact numerical constraint on the full cohomology, but it
does not by itself determine the individual degrees or separate the
inner, cuspidal, and Eisenstein contributions. The remainder of this
section extracts the strongest degreewise consequences that follow
from the Euler-characteristic formulas without carrying out the full
boundary and Eisenstein-cohomology calculation.

\subsection{The two \(\mathrm{GL}_4\)-extensions}\label{subse:degreewise-index-two}
Let $\lambda$ be the dominant integral highest weight of $\mathrm{SL}_4$ with fundamental-weight coordinates $(\ell_1,\ell_2,\ell_3)$ with $\ell_1,\ell_2,\ell_3\in\mathbb Z_{\geq0},$ and let \(\mathcal M_\lambda\) be the corresponding irreducible coefficient system.  Its polynomial \(\mathrm{GL}_4\)-extension has partition coordinates
\[
 \widetilde\lambda =\bigl(\ell_1+\ell_2+\ell_3, \ell_2+\ell_3,\ell_3,0\bigr).
\]
We denote this extension by \(\widetilde{\mathcal M}_\lambda\).  The other extension of the same \(\Gamma\)-module is \(\widetilde{\mathcal M}_\lambda\otimes\det\).

The Euler-characteristic identity~\eqref{eq:gl4-sl4-coefficient-relation} has the following degreewise form.
\begin{prop}[Degreewise index-two decomposition]\label{prop:degreewise-index-two}
For every \(q\geq0\), there is an isomorphism of \(\mathbb Q\)-vector spaces
\begin{equation}\label{eq:degreewise-index-two}
 H^q(\Gamma,\mathcal M_\lambda) \simeq H^q(\widetilde{\Gamma},\widetilde{\mathcal M}_\lambda) \oplus H^q(\widetilde{\Gamma},\widetilde{\mathcal M}_\lambda\otimes\det).
\end{equation}
Consequently, if
\begin{align*}
 b_q(\lambda) =\dim H^q(\Gamma,\mathcal M_\lambda),\qquad
 b_q^\varepsilon(\lambda) =\dim H^q\!\left(\widetilde{\Gamma},\widetilde{\mathcal M}_\lambda\otimes\det^\varepsilon \right),\qquad \varepsilon\in\{0,1\},
\end{align*}
then
\begin{equation}\label{eq:betti-index-two}
 b_q(\lambda)=b_q^0(\lambda)+b_q^1(\lambda)
\end{equation}
in every degree.
\end{prop}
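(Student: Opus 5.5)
The plan is to argue degreewise, using Shapiro's lemma together with the fact that induction from the index-two subgroup $\Gamma\subseteq\widetilde\Gamma$ splits over $\mathbb Q$. First, since $\Gamma=\ker(\det:\widetilde\Gamma\to\{\pm1\})$ has finite index, Shapiro's lemma gives, for every $q\geq0$,
\[
 H^q(\Gamma,\mathcal M_\lambda)\simeq H^q\bigl(\widetilde\Gamma,\operatorname{Coind}_\Gamma^{\widetilde\Gamma}\mathcal M_\lambda\bigr)
 \simeq H^q\bigl(\widetilde\Gamma,\operatorname{Ind}_\Gamma^{\widetilde\Gamma}\mathcal M_\lambda\bigr),
\]
where induction and coinduction agree because the index is finite. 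Here $\mathcal M_\lambda$ is the restriction of $\widetilde{\mathcal M}_\lambda$ to $\Gamma$.

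Next I will invoke the projection formula
\[
 \operatorname{Ind}_\Gamma^{\widetilde\Gamma}\bigl(\operatorname{Res}\widetilde{\mathcal M}_\lambda\bigr)\simeq\widetilde{\mathcal M}_\lambda\otimes\operatorname{Ind}_\Gamma^{\widetilde\Gamma}\mathbf 1,
\]
which already appeared in Subsection~\ref{ss:passage-GLtoSL}. It is realized explicitly by $g\otimes v\mapsto (g\Gamma)\otimes gv$, and one checks that this map is $\widetilde\Gamma$-equivariant and bijective.

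The permutation module $\operatorname{Ind}_\Gamma^{\widetilde\Gamma}\mathbf 1=\mathbb Q[\widetilde\Gamma/\Gamma]$ is two-dimensional. Because $2$ is invertible in $\mathbb Q$, it splits as $\mathbf 1\oplus\det$, the two summands being spanned by $e_\Gamma\pm e_{w\Gamma}$ for any $w$ with $\det w=-1$. Tensoring with $\widetilde{\mathcal M}_\lambda$ gives a $\widetilde\Gamma$-module decomposition
\[
 \operatorname{Ind}_\Gamma^{\widetilde\Gamma}\mathcal M_\lambda\simeq\widetilde{\mathcal M}_\lambda\oplus(\widetilde{\mathcal M}_\lambda\otimes\det).
\]
Group cohomology is additive in the coefficients, and this yields~\eqref{eq:degreewise-index-two}. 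Taking $\mathbb Q$-dimensions, which are finite by Section~\ref{se:basic}, then gives~\eqref{eq:betti-index-two}.

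No step here is a serious obstacle. The only point needing care is that both the splitting of $\mathbb Q[\widetilde\Gamma/\Gamma]$ and the identification of induction with coinduction require working over a field of characteristic different from $2$ and with a subgroup of finite index, and both conditions hold here.

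Alternatively, one could see the same result through the restriction and corestriction maps. The composite $\operatorname{cor}\circ\operatorname{res}$ is multiplication by $2$, and the quotient $\widetilde\Gamma/\Gamma$ acts on $H^q(\Gamma,\mathcal M_\lambda)$. The $(\pm1)$-eigenspaces of this action are then $H^q(\widetilde\Gamma,\widetilde{\mathcal M}_\lambda)$ and $H^q(\widetilde\Gamma,\widetilde{\mathcal M}_\lambda\otimes\det)$ respectively. This route is included as a consistency check on the argument above.
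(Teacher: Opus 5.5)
Your proposal is correct and is essentially the paper's proof: Shapiro's lemma, the projection formula $\operatorname{Ind}_\Gamma^{\widetilde\Gamma}\mathcal M_\lambda\simeq\widetilde{\mathcal M}_\lambda\otimes\operatorname{Ind}_\Gamma^{\widetilde\Gamma}\mathbf 1$, the splitting $\operatorname{Ind}_\Gamma^{\widetilde\Gamma}\mathbf 1\simeq\mathbf 1\oplus\det$ over $\mathbb Q$, and additivity of cohomology in the coefficients. You also supply details the paper leaves implicit: the explicit equivariant isomorphism $g\otimes v\mapsto (g\Gamma)\otimes gv$, the eigenvectors $e_\Gamma\pm e_{w\Gamma}$, and the need for finite index and characteristic $\neq 2$.
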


\begin{proof}
Since \(\Gamma\) is normal of index two in \(G\),
\[
 \operatorname{Ind}_{\Gamma}^{\widetilde{\Gamma}}\mathbf 1\simeq\mathbf 1\oplus\det.
\]
Shapiro's lemma~\cite[Chapter~III, Prop.~6.2]{Brown94}, followed by the projection formula, gives
\[
 H^q(\Gamma,\mathcal M_\lambda)\simeq H^q\!\left(\widetilde{\Gamma},\widetilde{\mathcal M}_\lambda\otimes\operatorname{Ind}_{\Gamma}^{\widetilde{\Gamma}}\mathbf 1\right),
\]
and the asserted decomposition follows.
\end{proof}

The first degreewise vanishing statement is supplied by the center and
is stronger than a vanishing of Euler characteristics.

\begin{prop}[Central-character vanishing]
\label{prop:central-degreewise-vanishing}
If $\ell_1+\ell_3\equiv1\pmod2,$ then, for every \(q\geq0\) and \(\varepsilon\in\{0,1\}\),
\[
 H^q(\Gamma,\mathcal M_\lambda)=0, \qquad H^q\left(\widetilde{\Gamma},\widetilde{\mathcal M}_\lambda\otimes\det^\varepsilon\right)=0.
\]
\end{prop}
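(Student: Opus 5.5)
The plan is to reuse the Lyndon--Hochschild--Serre argument from the proof of Proposition~\ref{prop-Sym-SL4}, applied to the central subgroup $Z=\langle -I_4\rangle$. This works both in $\Gamma$ and in $\widetilde\Gamma$: since $-I_4$ has determinant $(-1)^4=1$, it lies in $\Gamma$, and it is central in $\widetilde\Gamma$.

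The first step is to compute the action of $-I_4$ on the coefficient systems. On $\widetilde{\mathcal M}_\lambda$, which has polynomial highest weight $\widetilde\lambda$, the scalar $-I_4$ acts by
\[
(-1)^{|\widetilde\lambda|}=(-1)^{\ell_1+2\ell_2+3\ell_3}=(-1)^{\ell_1+\ell_3}.
\]
This holds because $\widetilde{\mathcal M}_\lambda$ is irreducible, so by Schur's lemma a central element acts by the scalar obtained by evaluating the highest weight on $-I_4$. Twisting by $\det^\varepsilon$ does not change this action, since $\det(-I_4)=1$. Restricting to $\Gamma$ gives the same scalar on $\mathcal M_\lambda$. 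Under the hypothesis $\ell_1+\ell_3\equiv1\pmod 2$, the element $-I_4$ therefore acts by $-1$ on all three modules.

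The second step is the same for each of $\Gamma$ and $\widetilde\Gamma$. Let $G'$ denote either group and $M$ the corresponding module, and consider the extension
\[
1\to Z\to G'\to G'/Z\to1
\]
with its spectral sequence $E_2^{p,q}=H^p\bigl(G'/Z,H^q(Z,M)\bigr)\Rightarrow H^{p+q}(G',M)$. Since $Z$ is finite and $M$ is a $\mathbb Q$-vector space, $H^q(Z,M)=0$ for $q>0$; this follows by averaging, or because multiplication by $|Z|$ is both zero and invertible on these groups. Moreover $H^0(Z,M)=M^Z=0$, because $-I_4$ acts by $-1$. Hence $E_2=0$, and $H^q(G',M)=0$ for all $q$.

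Alternatively, one can deduce the statement for $\widetilde\Gamma$ from the $\Gamma$ case via Proposition~\ref{prop:degreewise-index-two}, since the dimensions are nonnegative and sum to $b_q(\lambda)=0$. The only point needing care is the scalar computation, namely that the determinant twist does not alter the central character. This is immediate since $\det(-I_4)=1$, so I expect no real obstacle.
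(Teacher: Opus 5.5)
Your proposal is correct and takes the same approach as the paper. In both, $-I_4$ acts by $(-1)^{\ell_1+\ell_3}$, and the determinant twist leaves this unchanged because $\det(-I_4)=1$. The Lyndon--Hochschild--Serre spectral sequence for $\langle -I_4\rangle$ then has zero $E_2$-page, since the invariants vanish and a finite group has no higher cohomology with rational coefficients. Your alternative deduction of the $\widetilde\Gamma$ cases from the $\Gamma$ case via Proposition~\ref{prop:degreewise-index-two} is also valid.
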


\begin{proof}
The central element \(-I_4\) acts on \(\mathcal M_\lambda\) by
\[
 (-1)^{\ell_1+2\ell_2+3\ell_3} =(-1)^{\ell_1+\ell_3}.
\]
Moreover, \(\det(-I_4)=1\), so the determinant twist does not change
this action.  If \(\ell_1+\ell_3\) is odd, the invariant space for the
central subgroup \(\langle-I_4\rangle\) is zero.  Since this subgroup is
finite and the coefficient field has characteristic zero, its positive
degree cohomology also vanishes.  The Lyndon--Hochschild--Serre spectral
sequence therefore has zero \(E_2\)-page in each of the three cases.
\end{proof}

\begin{rmk}\label{rmk:all-odd-distinction}
Propositions~\ref{prop:sl4-parity-vanishing} and~\ref{prop:gl4-parity-vanishing} also show that the Euler characteristics vanish when $\ell_1\equiv\ell_2\equiv\ell_3\equiv1\pmod2$.  In that case \(-I_4\) acts trivially.  Thus the argument above does not give
degreewise vanishing, and no such conclusion is intended: an Euler characteristic equal to zero may conceal nonzero cohomology in degrees of opposite parity.
\end{rmk}

\subsection{Parity constraints on the Betti numbers}\label{subse:general-betti-constraints}

The virtual cohomological dimension of \(\Gamma\) and \(\widetilde{\Gamma}\) is six; hence their rational cohomology vanishes above degree six~\cite{BorelSerre73}.  We shall also use the standard low-degree vanishing for nontrivial irreducible algebraic coefficient systems:
\begin{equation}\label{eq:low-degree-vanishing-section-seven}
 H^q(\Gamma,\mathcal M_\lambda)=0,\qquad H^q\left(\widetilde{\Gamma},\widetilde{\mathcal M}_\lambda\otimes\det^\varepsilon\right)=0 \quad(q=0,1,2),
\end{equation}  
for $\lambda\neq(0,0,0)$. Indeed, since $\Gamma=\mathrm{SL}_4(\mathbb{Z})$ is Zariski dense in $\mathrm{SL}_4$, the nontrivial irreducible representation
$\mathcal{M}_\lambda$ has no $\Gamma$-invariant vectors:
\[
\bigl(\mathcal{M}_\lambda\otimes_{\mathbb{Q}}\mathbb{C}\bigr)^\Gamma=0.
\]
Moreover, $r_{\mathrm{SL}_4(\mathbb{R})}=3$. Hence Li--Sun~\cite[Thm~1.8 and Table~2]{LiSun19}, after extension of scalars to $\mathbb{C}$, gives
\[ H^q\left(\Gamma,\mathcal{M}_\lambda\otimes_{\mathbb{Q}}\mathbb{C}\right)=0 \qquad (q<3). \]
It follows that $H^q(\Gamma,\mathcal{M}_\lambda)=0$ for $q=0,1,2$. Finally, the degreewise index-two decomposition of Proposition~12 implies that both summands
\[
H^q\left(\widetilde{\Gamma},\widetilde{\mathcal{M}}_\lambda\right) \quad \text{and} \quad H^q\left(\widetilde{\Gamma}, \widetilde{\mathcal{M}}_\lambda\otimes\det\right)
\]
vanish in the same degrees. This proves~\eqref{eq:low-degree-vanishing-section-seven}. The trivial coefficient system will be treated separately below.

For a nontrivial \(\lambda\), define
\begin{align*}
 E_\lambda&=b_4(\lambda)+b_6(\lambda),
 &O_\lambda&=b_3(\lambda)+b_5(\lambda),\\
 E_\lambda^\varepsilon
 &=b_4^\varepsilon(\lambda)+b_6^\varepsilon(\lambda),
 &O_\lambda^\varepsilon
 &=b_3^\varepsilon(\lambda)+b_5^\varepsilon(\lambda).
\end{align*}
Recall that
\[
 \chi_\lambda=\chi_h(\Gamma,\mathcal M_\lambda),\qquad \chi_\lambda^\varepsilon =\chi_h\!\left(\widetilde{\Gamma},\widetilde{\mathcal M}_\lambda\otimes\det^\varepsilon \right).
\]

\begin{prop}[Degree-parity identities and bounds]\label{prop:degree-parity-identities}
Let \(\lambda\neq(0,0,0)\).  Then
\begin{equation}\label{eq:sl4-even-odd-difference}
 E_\lambda-O_\lambda=\chi_\lambda
\end{equation}
and, for \(\varepsilon\in\{0,1\}\),
\begin{equation}\label{eq:gl4-even-odd-difference}
 E_\lambda^\varepsilon-O_\lambda^\varepsilon
 =\chi_\lambda^\varepsilon.
\end{equation}
In particular,
\begin{align}
 \chi_\lambda>0
 &\quad\Longrightarrow\quad
 b_4(\lambda)+b_6(\lambda)\geq\chi_\lambda,
 \label{eq:positive-chi-even-bound}\\
 \chi_\lambda<0
 &\quad\Longrightarrow\quad
 b_3(\lambda)+b_5(\lambda)\geq-\chi_\lambda.
 \label{eq:negative-chi-odd-bound}
\end{align}
The analogous assertions hold separately with
\((b_q,\chi_\lambda)\) replaced by
\((b_q^\varepsilon,\chi_\lambda^\varepsilon)\).
\end{prop}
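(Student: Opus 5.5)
The plan is to combine the finite range of cohomological degrees with the low-degree vanishing~\eqref{eq:low-degree-vanishing-section-seven}. Since $\vcd(\Gamma)=6$, the bound~\eqref{eq:sl4-vanishing-above-vcd} gives $b_q(\lambda)=0$ for $q>6$. For $\lambda\neq(0,0,0)$, \eqref{eq:low-degree-vanishing-section-seven} gives $b_0(\lambda)=b_1(\lambda)=b_2(\lambda)=0$. The defining alternating sum for $\chi_\lambda$ therefore collapses to
\[
 \chi_\lambda=-b_3(\lambda)+b_4(\lambda)-b_5(\lambda)+b_6(\lambda)=E_\lambda-O_\lambda,
\]
which is~\eqref{eq:sl4-even-odd-difference}.

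For the $\mathrm{GL}_4(\mathbb Z)$ version we argue in the same way. The group $\widetilde\Gamma$ contains $\Gamma$ with finite index, so it also has virtual cohomological dimension six. Alternatively, Proposition~\ref{prop:degreewise-index-two} shows $b_q^\varepsilon(\lambda)\le b_q(\lambda)$, so each $b_q^\varepsilon$ vanishes wherever $b_q$ does. Hence $b_q^\varepsilon(\lambda)=0$ for $q>6$. Also $b_q^\varepsilon(\lambda)=0$ for $q\le 2$, by the second half of~\eqref{eq:low-degree-vanishing-section-seven}, which was deduced from the index-two decomposition. The alternating sum defining $\chi_\lambda^\varepsilon$ in~\eqref{eq:gl4-chi-epsilon} then reduces to $E_\lambda^\varepsilon-O_\lambda^\varepsilon$, which is~\eqref{eq:gl4-even-odd-difference}.

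The inequalities follow from nonnegativity of the Betti numbers. If $\chi_\lambda>0$, then $E_\lambda=\chi_\lambda+O_\lambda\ge\chi_\lambda$ because $O_\lambda\ge0$; this is~\eqref{eq:positive-chi-even-bound}. If $\chi_\lambda<0$, then $O_\lambda=E_\lambda-\chi_\lambda\ge-\chi_\lambda$ because $E_\lambda\ge0$; this is~\eqref{eq:negative-chi-odd-bound}. The same two lines, applied to $(b_q^\varepsilon,\chi_\lambda^\varepsilon)$, give the analogous assertions for each $\varepsilon\in\{0,1\}$.

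There is no real obstacle here. The only point needing care is the input~\eqref{eq:low-degree-vanishing-section-seven}, and in particular its transfer to both $\mathrm{GL}_4$-summands. That transfer is already secured by the degreewise isomorphism~\eqref{eq:degreewise-index-two}: a direct summand of a zero space is zero. This is also why the hypothesis $\lambda\neq(0,0,0)$ is required, since for the trivial system $b_0=1$ and the collapse to degrees $3$ through $6$ fails.
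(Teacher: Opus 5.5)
Your proof is correct and matches the paper's: vanishing above the virtual cohomological dimension together with the low-degree vanishing \eqref{eq:low-degree-vanishing-section-seven} reduces the alternating sum to $-b_3+b_4-b_5+b_6$, and the two inequalities follow from nonnegativity of the Betti numbers. Your transfer of the vanishing to both $\widetilde\Gamma$-summands, via finite index or the degreewise decomposition, just spells out what the paper calls ``the same argument.''
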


\begin{proof}
By~\eqref{eq:low-degree-vanishing-section-seven} and the virtual
cohomological dimension, the Euler characteristic is
\[
 \chi_\lambda
 =-b_3(\lambda)+b_4(\lambda)-b_5(\lambda)+b_6(\lambda),
\]
which is~\eqref{eq:sl4-even-odd-difference}.  The same argument gives
\eqref{eq:gl4-even-odd-difference}.  The inequalities follow from the
nonnegativity of all Betti numbers.
\end{proof}

The identities also give exact information about the total Betti
number.  Put
\[
 B_\lambda=\sum_qb_q(\lambda),
 \qquad
 B_\lambda^\varepsilon=\sum_qb_q^\varepsilon(\lambda).
\]
Then
\begin{equation}\label{eq:total-betti-exact-identity}
 B_\lambda=
 \begin{cases}
  \chi_\lambda+2O_\lambda,&\chi_\lambda\geq0,\\
  -\chi_\lambda+2E_\lambda,&\chi_\lambda\leq0,
 \end{cases}
\end{equation}
and the same formula holds for each determinant parity.  Consequently,
\begin{equation}\label{eq:total-betti-basic-bound}
 B_\lambda\geq|\chi_\lambda|,
 \qquad
 B_\lambda^\varepsilon\geq|\chi_\lambda^\varepsilon|.
\end{equation}
Moreover, the difference between either total Betti number and the
absolute value of its Euler characteristic is an even nonnegative
integer.

The separate \(\mathrm{GL}_4\)-formulas yield a refinement not visible
from the \(\mathrm{SL}_4\)-Euler characteristic alone.

\begin{coro}[Refined index-two lower bound]
\label{cor:refined-index-two-lower-bound}
For every nontrivial highest weight \(\lambda\),
\begin{equation}\label{eq:refined-index-two-lower-bound}
 \sum_q b_q(\lambda)
 \geq
 |\chi_\lambda^0|+|\chi_\lambda^1|
 \geq
 |\chi_\lambda|.
\end{equation}
\end{coro}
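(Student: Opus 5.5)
The plan is to combine the degreewise index-two decomposition with the basic total-Betti bound, applied separately to each $\mathrm{GL}_4(\mathbb Z)$-summand. First, by Proposition~\ref{prop:degreewise-index-two}, we have $b_q(\lambda)=b_q^0(\lambda)+b_q^1(\lambda)$ for every $q$. Summing over $q$ then gives
\[
 \sum_q b_q(\lambda)=B_\lambda^0+B_\lambda^1 .
\]
All these sums are finite, since cohomology vanishes above the virtual cohomological dimension six.

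Next, apply the second inequality in~\eqref{eq:total-betti-basic-bound} to each determinant parity. This gives $B_\lambda^\varepsilon\geq|\chi_\lambda^\varepsilon|$ for $\varepsilon\in\{0,1\}$. The bound needs nothing beyond the definition of $\chi_\lambda^\varepsilon$ as an alternating sum of dimensions together with the triangle inequality. In particular it does not rely on the low-degree vanishing~\eqref{eq:low-degree-vanishing-section-seven}, although that vanishing is available for nontrivial $\lambda$ anyway. Adding the two inequalities yields the first inequality in~\eqref{eq:refined-index-two-lower-bound}.

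For the second inequality, use~\eqref{eq:gl4-sl4-coefficient-relation}, which states $\chi_\lambda=\chi_\lambda^0+\chi_\lambda^1$. The triangle inequality in $\mathbb{Q}$ then gives $|\chi_\lambda|\leq|\chi_\lambda^0|+|\chi_\lambda^1|$.

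None of these steps is a real obstacle; the argument is bookkeeping. The one point that needs care is that the identification of the two extensions matches the notation: $\chi_\lambda^\varepsilon$ must refer to exactly the modules $\widetilde{\mathcal M}_\lambda\otimes\det^\varepsilon$ that appear in Proposition~\ref{prop:degreewise-index-two}. This holds by the definition in~\eqref{eq:gl4-chi-epsilon}, since $\widetilde{\mathcal M}_\lambda=\mathcal M_{(\ell_1,\ell_2,\ell_3,0)}$.
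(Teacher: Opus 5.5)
Your proposal is correct and follows the paper's own proof: split $B_\lambda=B_\lambda^0+B_\lambda^1$ using the degreewise index-two decomposition, apply $B_\lambda^\varepsilon\geq|\chi_\lambda^\varepsilon|$ to each summand, and finish with the triangle inequality and $\chi_\lambda=\chi_\lambda^0+\chi_\lambda^1$. Your remark that $B_\lambda^\varepsilon\geq|\chi_\lambda^\varepsilon|$ needs only the triangle inequality, and not the low-degree vanishing, is also correct. It shows that the nontriviality hypothesis on $\lambda$ is not actually needed for this inequality.
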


\begin{proof}
Equation~\eqref{eq:betti-index-two} gives
\(B_\lambda=B_\lambda^0+B_\lambda^1\).  Apply
\eqref{eq:total-betti-basic-bound} to the two summands and then use
\(\chi_\lambda=\chi_\lambda^0+\chi_\lambda^1\).
\end{proof}

The first inequality in~\eqref{eq:refined-index-two-lower-bound} can be
strictly stronger than the bound \(B_\lambda\geq|\chi_\lambda|\): if
\(\chi_\lambda^0\) and \(\chi_\lambda^1\) have opposite signs, part of
their contribution cancels after passage to \(H\), whereas the two
degreewise eigenspaces remain distinct.

\begin{rmk}\label{rmk:balanced-subspace-bound}
Suppose that independent information produces a subspace of dimension \(c_\lambda\) in each of degrees four and five.  These two contributions cancel in the Euler characteristic, but they contribute \(2c_\lambda\) to the total Betti number.  Thus
\[
 B_\lambda\geq|\chi_\lambda|+2c_\lambda.
\]
This observation is especially relevant for self-dual weights \(\ell_1=\ell_3\), but no assertion about the existence or origin of such a subspace is needed for the results above.
\end{rmk}

\subsection{Infinite families with growing cohomological lower bounds}\label{subse:forced-cohomology-families}

The residue polynomials of Theorems~\ref{thm:sl4-residue-formulas} and~\ref{thm:gl4-residue-formulas} can now be read as quantitative cohomological statements.  At every nontrivial highest weight for which the relevant residue polynomial is nonzero, the corresponding cohomology is nonzero.  Its sign specifies whether the forced contribution lies on the even side \(H^4\oplus H^6\) or on the odd side \(H^3\oplus H^5\).  The two \(\mathrm{GL}_4\)-polynomials give the same
information separately in the two determinant eigenspaces.

For example, the residue formula with
\(\lambda=(12t,12t,12t)\) gives
\[
 \chi_{(12t,12t,12t)}=8t^2-4t
 \qquad(t\geq1).
\]
Proposition~\ref{prop:degree-parity-identities} therefore yields the following stronger form of nonvanishing.

\begin{coro}\label{cor:quadratic-even-degree-growth}
For every \(t\geq1\),
\[
 \dim H^4\left(\Gamma,\mathcal M_{(12t,12t,12t)}\right) + \dim H^6\left(\Gamma,\mathcal M_{(12t,12t,12t)}\right) \geq8t^2-4t.
\]
In particular,
\[
 \sum_q\dim H^q\left(\Gamma,\mathcal M_{(12t,12t,12t)}\right)\geq8t^2-4t.
\]
\end{coro}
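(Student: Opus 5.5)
The plan is to combine the explicit Euler characteristic value with the degree-parity bound of Proposition~\ref{prop:degree-parity-identities}. There are three steps, taken in order.

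First, I will evaluate the Euler characteristic on this family. Put \(\ell_1=\ell_2=\ell_3=12t\). In the residue notation~\eqref{eq:sl4-residue-notation} this is \(r_1=r_2=r_3=0\) and \(m_1=m_2=m_3=t\). Substituting into the residue polynomial~\eqref{eq:sl4-zero-residue-example} from the proof of Theorem~\ref{thm:sl4-residue-formulas} gives
\[
\chi_{(12t,12t,12t)}=(2t+2t-2)t+2t^2+2t^2-2t=8t^2-4t .
\]
The same value was already recorded in the proof of Corollary~\ref{cor:sl4-nonvanishing}. For \(t\geq1\) we have \(8t^2-4t=4t(2t-1)>0\).

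Second, I will check the hypothesis \(\lambda\neq(0,0,0)\), which Proposition~\ref{prop:degree-parity-identities} requires. Since \(t\geq1\), this holds. The low-degree vanishing~\eqref{eq:low-degree-vanishing-section-seven} therefore applies, so \(b_0=b_1=b_2=0\). Together with vanishing above the virtual cohomological dimension~\eqref{eq:sl4-vanishing-above-vcd}, this gives \(\chi_\lambda=E_\lambda-O_\lambda\). Because \(\chi_\lambda>0\), the implication~\eqref{eq:positive-chi-even-bound} yields
\[
b_4(\lambda)+b_6(\lambda)\geq\chi_\lambda+O_\lambda\geq 8t^2-4t .
\]
This is the first inequality of the corollary.

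Third, the total Betti number bound follows at once. Nonnegativity of the omitted Betti numbers gives \(\sum_q b_q\geq b_4+b_6\). Alternatively, it follows from~\eqref{eq:total-betti-basic-bound}.

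There is no real obstacle. The only substantive input is the low-degree vanishing~\eqref{eq:low-degree-vanishing-section-seven} (Li--Sun), which the paper has already established; that vanishing is what ensures the positive Euler characteristic lands in the even degrees \(4,6\).
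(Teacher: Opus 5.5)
Your proposal is correct and follows essentially the same route as the paper: specialize the residue polynomial~\eqref{eq:sl4-zero-residue-example} at $m_1=m_2=m_3=t$ to get $\chi_{(12t,12t,12t)}=8t^2-4t>0$, then invoke the positive-$\chi$ case~\eqref{eq:positive-chi-even-bound} of Proposition~\ref{prop:degree-parity-identities}. Your explicit checks that $\lambda\neq(0,0,0)$ and that the low-degree vanishing~\eqref{eq:low-degree-vanishing-section-seven} is what pushes the bound into degrees $4$ and $6$ only spell out what the paper leaves implicit.
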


More generally, restricting any nonzero residue polynomial to a ray in the dominant cone gives a linear or quadratic lower bound whenever the absolute value of that specialization grows.  This turns the complete residue tables into an explicit collection of infinite cohomological nonvanishing and growth families, without requiring a degreewise calculation of the cohomology.

\subsection{Symmetric powers: exact Euler-characteristic identities} \label{subse:symmetric-power-identities}

We now specialize to
\[
 \lambda=(m,0,0),
 \qquad
 \mathcal M_\lambda=\operatorname{Sym}^mV,
\]
where \(V\) is the standard four-dimensional representation.  Let
\[
 s_k=\dim S_k\!\left(\mathrm{SL}_2(\mathbb Z)\right),
\]
with \(s_k=0\) for odd \(k\).  Recall the classical generating series
\begin{equation}\label{eq:cusp-dimension-generating-series}
 \sum_{k\geq0}s_kt^k
 =\frac{t^{12}}{(1-t^4)(1-t^6)}.
\end{equation}
For \(m\geq0\), put
\[
 \delta_{m,0}=
 \begin{cases}1,&m=0,\\0,&m>0,\end{cases}
 \qquad
 e_m=
 \begin{cases}1,&m\text{ is even},\\0,&m\text{ is odd}.
 \end{cases}
\]

\begin{prop}[Coefficient extraction for symmetric powers]
\label{prop:symmetric-power-coefficient-extraction}
For every \(m\geq0\),
\begin{align}
 \chi_h\left(\widetilde{\Gamma},\operatorname{Sym}^mV\right)&=\delta_{m,0}-s_{m+2},\label{eq:sym-untwisted-euler-cusp-dimension}\\
 \chi_h\left(\widetilde{\Gamma},\operatorname{Sym}^mV\otimes\det\right)&=-e_m-s_{m+4}.\label{eq:sym-twisted-euler-cusp-dimension}
\end{align}
Consequently,
\begin{equation}\label{eq:sym-sl4-euler-cusp-dimension}
 \chi_h\left(\Gamma,\operatorname{Sym}^mV\right) =\delta_{m,0}-e_m-s_{m+2}-s_{m+4}.
\end{equation}
In particular, for every even \(m>0\),
\begin{equation}\label{eq:sym-sl4-negative-euler}
 -\chi_h\!\left(\Gamma,\operatorname{Sym}^mV\right)=1+s_{m+2}+s_{m+4}.
\end{equation}
\end{prop}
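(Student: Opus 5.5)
The plan is to compare generating functions. Equations \eqref{gengl4} and \eqref{gengl4-odd} give the $\mathrm{GL}_4(\mathbb Z)$ symmetric-power Euler characteristics as rational series in $t$. The right-hand sides can also be written as rational series using \eqref{eq:cusp-dimension-generating-series}. The statements \eqref{eq:sym-untwisted-euler-cusp-dimension} and \eqref{eq:sym-twisted-euler-cusp-dimension} then reduce to two identities of rational functions. The first step is to factor the common denominator:
\[
1-t^4-t^6+t^{10}=(1-t^4)(1-t^6).
\]

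For the untwisted identity, I would compute
\[
\sum_{m\geq0}\bigl(\delta_{m,0}-s_{m+2}\bigr)t^m
=1-t^{-2}\Bigl(\sum_{k\geq0}s_kt^k-s_0-s_1t\Bigr).
\]
Since $s_0=s_1=0$, this equals
\[
1-\frac{t^{10}}{(1-t^4)(1-t^6)}
=\frac{(1-t^4)(1-t^6)-t^{10}}{(1-t^4)(1-t^6)}
=\frac{1-t^4-t^6}{1-t^4-t^6+t^{10}},
\]
which is \eqref{gengl4}.

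For the twisted identity, I would compute $\sum_m e_mt^m=1/(1-t^2)$. Using $s_k=0$ for $k<12$, the shift by four gives
\[
\sum_{m\geq 0}s_{m+4}t^m=\frac{t^8}{(1-t^4)(1-t^6)}.
\]
The claim is therefore
\[
\frac{1}{1-t^2}+\frac{t^8}{(1-t^4)(1-t^6)}=\frac{1+t^2-t^6}{(1-t^4)(1-t^6)}.
\]
To check this, note that $(1-t^4)(1-t^6)/(1-t^2)=(1+t^2)(1-t^6)=1+t^2-t^6-t^8$. Adding $t^8$ gives $1+t^2-t^6$, so the identity holds and matches \eqref{gengl4-odd}.

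Next, \eqref{eq:sym-sl4-euler-cusp-dimension} follows by summing the two identities via \eqref{eq:gl4-sl4-coefficient-relation}, or equivalently \eqref{eq:index-two-sl4}. For \eqref{eq:sym-sl4-negative-euler}, take $m$ even and positive, so that $\delta_{m,0}=0$ and $e_m=1$. As a sanity check, the result agrees with Proposition~\ref{prop-Sym-SL4} and with \eqref{gensl4}: adding the two numerators gives $-(t^2+t^4)$.

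The main obstacle is not the algebra, which is routine. It is whether \eqref{gengl4} and \eqref{gengl4-odd} may be taken as established. They were stated only "for example," as the output of substituting the trace series $\det(I-tA^{-1})^{-1}$ into \eqref{eq:gl4-general-weighted-sum}. If a fuller justification is wanted, I would verify them with partial fractions. Each row contributes $W_{\widetilde\Gamma}(A)\det(A)^\varepsilon/\det(I-tA^{-1})$, and every denominator is a product of cyclotomic factors $\Phi_d(t)$ with $d\mid 12$. These combine over the common denominator $(1-t^4)(1-t^6)$, possibly after multiplying through by an auxiliary factor that then cancels. The combined numerator is a polynomial of bounded degree, so comparing finitely many Taylor coefficients, say $m\le 12$, against Table~\ref{torsion-sl4-traces} and the $Q,R,S$ traces suffices.

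A consistency check on the sign conventions for $Q,R,S$ is also available. By \eqref{gengl4-odd}, the untwisted and twisted series must sum to the $\mathrm{SL}_4$ series \eqref{gensl4}, which was proved independently in Proposition~\ref{prop-Sym-SL4}.
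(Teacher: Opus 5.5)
Your proposal is correct and is essentially the paper's proof. Both write the right-hand sides as $t^{10}/D(t)$, $t^{8}/D(t)$ and $1/(1-t^2)$ with $D(t)=(1-t^4)(1-t^6)$, match the resulting rational functions to \eqref{gengl4} and \eqref{gengl4-odd} (which the paper, like you, takes as given), and then add the two identities. One small correction to your side remark: the sum of the untwisted and twisted series cannot check the $Q,R,S$ rows, because they enter that sum with the factor $1+\det(A)=0$; only the difference of the two series, or a direct coefficient comparison, would test their signs and weights.
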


\begin{proof}
Set \(D(t)=(1-t^4)(1-t^6)\).  From
\eqref{eq:cusp-dimension-generating-series},
\[
 \sum_{m\geq0}s_{m+2}t^m=\frac{t^{10}}{D(t)},
 \qquad
 \sum_{m\geq0}s_{m+4}t^m=\frac{t^8}{D(t)}.
\]
Therefore
\[
 \sum_{m\geq0}(\delta_{m,0}-s_{m+2})t^m
 =1-\frac{t^{10}}{D(t)}
 =\frac{1-t^4-t^6}{1-t^4-t^6+t^{10}},
\]
which is~\eqref{gengl4}.  Moreover,
\[
 \sum_{m\geq0}e_mt^m=\frac1{1-t^2},
\]
and hence
\[
 \sum_{m\geq0}(-e_m-s_{m+4})t^m
 =-\frac1{1-t^2}-\frac{t^8}{D(t)}
 =-\frac{1+t^2-t^6}{1-t^4-t^6+t^{10}},
\]
which is~\eqref{gengl4-odd}.  Adding the two identities and using
Proposition~\ref{prop:degreewise-index-two} proves
\eqref{eq:sym-sl4-euler-cusp-dimension}.
\end{proof}

For odd \(m\), Proposition~\ref{prop:central-degreewise-vanishing}
shows that all the corresponding cohomology groups vanish, not merely
their Euler characteristics.  The remaining discussion therefore
concerns even \(m\).

\subsection{The known determinant-twisted summand}
\label{subse:known-symmetric-power-summand}

The extra information that distinguishes symmetric powers from an
arbitrary highest-weight family is the following degreewise
calculation of Horozov~\cite{Horozov2014}.

\begin{thm}\cite[Thm.~1.1]{Horozov2014}\label{thm:horozov-symmetric-power-cohomology}
Let \(m\geq0\) be even, and let \(V_2\) denote the standard
two-dimensional representation.  Then
\[
 H^q\!\left(\widetilde{\Gamma},\operatorname{Sym}^mV\otimes\det\right)=
 \begin{cases}
  \mathbb Q\oplus H^1_{\mathrm{cusp}}\left(\mathrm{GL}_2(\mathbb Z), \operatorname{Sym}^{m+2}V_2\otimes\det \right),&q=3,\\
  0,&q\neq3.
 \end{cases}
\]
In particular,
\begin{equation}\label{eq:horozov-twisted-betti-number}
 b_3^1(m,0,0)=1+s_{m+4},
 \qquad
 b_q^1(m,0,0)=0\quad(q\neq3).
\end{equation}
\end{thm}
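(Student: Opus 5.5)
The degreewise description of $H^q(\widetilde{\Gamma},\operatorname{Sym}^mV\otimes\det)$ is Horozov's theorem~\cite[Thm.~1.1]{Horozov2014}, and I would import it as stated rather than reprove it. For orientation, Horozov's route is as follows. One computes the boundary cohomology of the Borel--Serre compactification using the parabolic spectral sequence~\eqref{eq:sl4-boundary-spectral-sequence} together with Kostant's theorem~\eqref{eq:sl4-kostant-boundary-input}. One then identifies the image of the restriction map, and shows that the inner cohomology vanishes for this determinant-twisted family. The Euler characteristic is the consistency check that pins down the remaining ambiguity. The only thing left to justify here is the ``in particular'' clause~\eqref{eq:horozov-twisted-betti-number}.

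For that clause, the vanishing $b_q^1(m,0,0)=0$ for $q\neq3$ is read off directly from the theorem. For $q=3$ it suffices to show
\[
\dim H^1_{\mathrm{cusp}}\bigl(\mathrm{GL}_2(\mathbb Z),\operatorname{Sym}^{m+2}V_2\otimes\det\bigr)=s_{m+4}.
\]
I would compute this in three steps.

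\begin{enumerate}
\item Apply the Eichler--Shimura isomorphism for $\mathrm{SL}_2(\mathbb Z)$ with coefficients $\operatorname{Sym}^{k}V_2$, where $k=m+2$ is even. This identifies the cuspidal $H^1$, after tensoring with $\mathbb C$, with $S_{k+2}\oplus\overline{S_{k+2}}$, which has dimension $2s_{k+2}=2s_{m+4}$.
\item Pass to $\mathrm{GL}_2(\mathbb Z)$. Since $\mathrm{SL}_2(\mathbb Z)$ has index two, the index-two argument of Proposition~\ref{prop:degreewise-index-two} (now in rank two) splits this $H^1$ into the two $\det$-eigenspaces.
\item Note that the element $\operatorname{diag}(1,-1)$ acts on the cuspidal part as the involution interchanging the holomorphic and antiholomorphic pieces. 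Its $\pm1$ eigenspaces are therefore each of dimension $s_{k+2}$. Hence the $\det$-twisted summand has dimension $s_{m+4}$.
\end{enumerate}

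Adding the trivial summand $\mathbb Q$ gives $b_3^1=1+s_{m+4}$.

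As an independent check, the Betti numbers obtained this way give
\[
\chi_h\bigl(\widetilde{\Gamma},\operatorname{Sym}^mV\otimes\det\bigr)=-b_3^1=-1-s_{m+4}.
\]
This agrees with~\eqref{eq:sym-twisted-euler-cusp-dimension} for even $m$, since $e_m=1$ there. The agreement also confirms that each eigenspace in the third step has exactly dimension $s_{m+4}$, with no further splitting.

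The only delicate point is the eigenspace count in the $\mathrm{GL}_2$ step, namely that the involution swaps $S_{k+2}$ and $\overline{S_{k+2}}$ so that both eigenspaces have equal dimension. If a direct argument for this is not at hand, the Euler-characteristic match with~\eqref{eq:sym-twisted-euler-cusp-dimension} settles the dimension anyway.
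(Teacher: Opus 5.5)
Your proposal is correct and matches the paper, which also gives no independent proof and simply imports Horozov's Theorem~1.1. The paper leaves the ``in particular'' clause implicit; your Eichler--Shimura count supplies exactly that missing dimension step: the involution induced by $\operatorname{diag}(1,-1)$ interchanges $S_{m+4}$ and $\overline{S_{m+4}}$, so each $\det$-eigenspace of the cuspidal $H^1$ has dimension $s_{m+4}$, and your fallback via the twisted Euler characteristic $-e_m-s_{m+4}$ is also non-circular, because that value comes from Wall's formula independently of Horozov's theorem.
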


Thus Proposition~\ref{prop:symmetric-power-coefficient-extraction}
determines the Euler characteristic of the complementary, untwisted
summand exactly.  It does not, by itself, determine its individual
Betti numbers.

For brevity, write
\[
 b_q^0(m)=\dim H^q\!\left(\widetilde{\Gamma},\operatorname{Sym}^mV\right),\qquad E_m^0=b_4^0(m)+b_6^0(m).
\]

\begin{coro}[Exact constraints on the complementary summand]\label{cor:symmetric-power-complementary-constraints}
For every even \(m>0\),
\begin{equation}\label{eq:symmetric-power-odd-even-balance}
 b_3^0(m)+b_5^0(m)=s_{m+2}+b_4^0(m)+b_6^0(m).
\end{equation}
Consequently,
\begin{align}
  b_3^0(m)+b_5^0(m)&\geq s_{m+2},\label{eq:sym-complementary-odd-lower-bound}\\
  \sum_q b_q^0(m)&=s_{m+2}+2E_m^0,\label{eq:sym-complementary-total-identity}\\
  \sum_q\dim H^q\left(\Gamma,\operatorname{Sym}^mV\right)&=1+s_{m+4}+s_{m+2}+2E_m^0.\label{eq:sym-sl4-total-identity}
\end{align}
In particular,
\begin{equation}\label{eq:sym-sl4-total-lower-bound}
 \sum_q\dim H^q\left(\Gamma,\operatorname{Sym}^mV\right)\geq1+s_{m+4}+s_{m+2}.
\end{equation}
\end{coro}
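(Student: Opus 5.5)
The argument is pure bookkeeping: combine the Euler-characteristic identity of Proposition~\ref{prop:symmetric-power-coefficient-extraction} for the untwisted summand with the degreewise vanishing already available, then add Horozov's twisted computation.

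First, fix an even $m>0$, so that $\lambda=(m,0,0)\neq(0,0,0)$ and $\delta_{m,0}=0$. Then \eqref{eq:sym-untwisted-euler-cusp-dimension} gives
\[
\chi^0_{(m,0,0)}=-s_{m+2}.
\]
Next, apply the $\mathrm{GL}_4$ version \eqref{eq:gl4-even-odd-difference} of Proposition~\ref{prop:degree-parity-identities} with $\varepsilon=0$. This rests on the low-degree vanishing \eqref{eq:low-degree-vanishing-section-seven} and the vanishing above $\vcd=6$. It yields
\[
b_4^0(m)+b_6^0(m)-b_3^0(m)-b_5^0(m)=-s_{m+2}.
\]
Rearranging gives \eqref{eq:symmetric-power-odd-even-balance}. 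The bound \eqref{eq:sym-complementary-odd-lower-bound} follows at once because $E_m^0\geq0$.

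For the total-Betti identity \eqref{eq:sym-complementary-total-identity}, note that the only possibly nonzero degrees are $3,4,5,6$. Hence
\[
\sum_q b_q^0(m)=(b_3^0+b_5^0)+(b_4^0+b_6^0)=s_{m+2}+2E^0_m,
\]
by substituting the balance identity. Equivalently, this is \eqref{eq:total-betti-exact-identity} in the case $\chi\le0$.

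For \eqref{eq:sym-sl4-total-identity}, use the degreewise splitting \eqref{eq:betti-index-two} of Proposition~\ref{prop:degreewise-index-two}, which gives $\sum_q b_q=\sum_q b_q^0+\sum_q b_q^1$. By Theorem~\ref{thm:horozov-symmetric-power-cohomology}, specifically \eqref{eq:horozov-twisted-betti-number}, the twisted summand contributes exactly $1+s_{m+4}$, since it is concentrated in degree three. Adding the two contributions gives the identity. Dropping $2E_m^0\geq0$ then gives \eqref{eq:sym-sl4-total-lower-bound}.

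As a consistency check, this agrees with \eqref{eq:sym-sl4-negative-euler}, since $E_m=E_m^0$ (the twisted part has no even-degree cohomology).

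There is no real obstacle here. The only points to verify are bookkeeping ones:
\begin{itemize}
\item The low-degree vanishing \eqref{eq:low-degree-vanishing-section-seven} applies to $\widetilde{\Gamma}$ with untwisted coefficients $\operatorname{Sym}^mV$. This holds because $\lambda\neq 0$ and the index-two decomposition was already used to transfer the vanishing.
\item Horozov's theorem is stated for even $m$ and for the twist by $\det$, which matches $\varepsilon=1$ in our notation.
\end{itemize}
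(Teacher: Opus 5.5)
Your proposal is correct and follows the paper's proof step for step. You combine the low-degree vanishing with the untwisted value $\chi^0_{(m,0,0)}=-s_{m+2}$ to get the balance identity, add the even and odd sides for the total, and use Horozov's degree-three computation together with the degreewise index-two splitting for the $\mathrm{SL}_4$ identity and its lower bound.
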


\begin{proof}
For \(m>0\), the low-degree vanishing~\eqref{eq:low-degree-vanishing-section-seven} and~\eqref{eq:sym-untwisted-euler-cusp-dimension} give
\[
 -b_3^0(m)+b_4^0(m)-b_5^0(m)+b_6^0(m)=-s_{m+2},
\]
which is equivalent to~\eqref{eq:symmetric-power-odd-even-balance}.  Adding the even and odd sides gives~\eqref{eq:sym-complementary-total-identity}.  Finally,
Theorem~\ref{thm:horozov-symmetric-power-cohomology} and the degreewise decomposition~\eqref{eq:degreewise-index-two} give~\eqref{eq:sym-sl4-total-identity}.
\end{proof}

Identity~\eqref{eq:sym-sl4-total-identity} gives a precise measure of what remains invisible to the Euler characteristic.  The lower bound in~\eqref{eq:sym-sl4-total-lower-bound} is attained exactly when the complementary untwisted summand has no even-degree cohomology.  Even in that case, the Euler characteristic alone does not distinguish between degrees three and five.

\subsection{A degreewise symmetric-power conjecture} \label{subse:symmetric-power-conjecture}

The smallest degreewise table compatible with the preceding exact results places the entire complementary contribution in degree five. We record this as a conjecture, not as a consequence of the Euler characteristic calculation.

\begin{conj}[Degreewise symmetric-power conjecture] \label{conj:symmetric-power-degreewise}
For every even \(m>0\),
\begin{equation}\label{eq:conjectural-untwisted-symmetric-table}
 \dim H^q\!\left(\widetilde{\Gamma},\operatorname{Sym}^mV\right)=
 \begin{cases}
  s_{m+2},&q=5,\\
  0,&q\neq5.
 \end{cases}
\end{equation}
\end{conj}

Together with Theorem~\ref{thm:horozov-symmetric-power-cohomology}, Conjecture~\ref{conj:symmetric-power-degreewise} predicts
\begin{equation}\label{eq:conjectural-sl4-symmetric-table}
 \dim H^q\!\left(\Gamma,\operatorname{Sym}^mV\right)=
 \begin{cases}
  1+s_{m+4},&q=3,\\
  s_{m+2},&q=5,\\
  0,&q\neq3,5,
 \end{cases}
 \qquad(m>0\text{ even}).
\end{equation}
For odd \(m\), every group vanishes by
Proposition~\ref{prop:central-degreewise-vanishing}.  The case \(m=0\)
is separate: the rational trivial-coefficient calculation is
\[
 H^q(\Gamma,\mathbb Q)
 =
 \begin{cases}
  \mathbb Q,&q=0,3,\\
  0,&q\neq0,3;
 \end{cases}
\]
see~\cite{LS1976}.

For completeness, write \(m=12n+r\), where
\(r\in\{0,2,4,6,8,10\}\).  The conjectural nonzero Betti numbers in
\eqref{eq:conjectural-sl4-symmetric-table} are then as follows.

\begin{center}
\begin{tabular}{c|c|c}
\hline
\(r\) & predicted \(b_3=1+s_{m+4}\)
      & predicted \(b_5=s_{m+2}\)\\
\hline
0  & \(n+1\) & \(n-1\), for \(n\geq1\)\\
2  & \(n+1\) & \(n\)\\
4  & \(n+1\) & \(n\)\\
6  & \(n+1\) & \(n\)\\
8  & \(n+2\) & \(n\)\\
10 & \(n+1\) & \(n+1\)\\
\hline
\end{tabular}
\end{center}

\begin{rmk}\label{rmk:content-of-symmetric-conjecture}
Conjecture~\ref{conj:symmetric-power-degreewise} contains two assertions that do not follow from the present Euler-characteristic formulas: first, the untwisted summand has no cohomology in degrees four and six; second, its forced odd-degree contribution occurs in degree five rather than degree three.  Nor do the calculations in this section identify a parabolic source, an Eisenstein lift, or a Hecke-equivariant isomorphism for the predicted degree-five space.  Any such description requires additional cohomological input and lies beyond the scope of the present article.
\end{rmk}

\subsection{Summary of the unconditional conclusions}\label{subse:section-seven-summary}
For nontrivial highest weights, the results of this section give:
\begin{enumerate}
\item complete degreewise vanishing for $\Gamma$ and for both $\widetilde{\Gamma}$-extensions when $\ell_1+\ell_3$ is odd;
\item exact identities expressing the Euler characteristics in terms of the cohomological dimensions in degrees three through six, for $\Gamma$ and, separately, for the two $\widetilde{\Gamma}$-extensions $\widetilde{M}_\lambda$ and $\widetilde{M}_\lambda\otimes\det$;
\item parity-sensitive cohomological lower bounds obtained from the values and signs of the explicit residue polynomials;
\item the refined total lower bound
\[ B_\lambda\geq |\chi_\lambda^0|+|\chi_\lambda^1|;\]
and
\item explicit infinite families for which the resulting cohomological lower bounds grow linearly or quadratically.
\end{enumerate}

For even positive symmetric powers, combining the known determinant-twisted calculation with the Euler-characteristic identities and the degreewise
index-two decomposition gives the further exact identities~\eqref{eq:symmetric-power-odd-even-balance}--\eqref{eq:sym-sl4-total-identity}. These are the strongest conclusions available here without a separate degreewise analysis of the complementary untwisted summand. The conjectural formula~\eqref{eq:conjectural-sl4-symmetric-table} and the table following it are therefore retained as precise targets for subsequent work and are not used in proving any result of this paper.

\section{Concluding remarks and further directions}\label{se:conclusion}

In this paper, we have computed the homological Euler characteristics of \(\mathrm{SL}_4(\mathbb Z)\) and \(\mathrm{GL}_4(\mathbb Z)\) with
coefficients in arbitrary irreducible rational highest-weight representations.  The resulting formulas are indexed by the residue
classes of the highest-weight parameters modulo \(12\), and are encoded by explicit rational generating functions.  In particular, the Euler
characteristics are quasi-polynomial functions of total degree at most two and period \(12\).  For \(\mathrm{GL}_4(\mathbb Z)\), the untwisted and determinant-twisted formulas retain information that is lost after passing to \(\mathrm{SL}_4(\mathbb Z)\).

The results of Section~\ref{se:cohomological-consequences} show that these formulas already have degreewise consequences.  They give
vanishing when the central element \(-I_4\) acts nontrivially, parity-sensitive lower bounds for the Betti numbers, and explicit
families in which the resulting lower bounds grow linearly or quadratically.  They do not, however, determine the individual cohomology groups,
describe their restriction to the boundary, or determine their relation to inner, cuspidal, and Eisenstein cohomology.  We describe below four directions in which the present calculation can be developed further.

\subsection{Degreewise cohomology and the symmetric-power family} \label{subse:further-symmetric-powers}

An Euler characteristic records only the alternating sum of the Betti numbers.  Even when its value and sign are known, it does not determine the degrees in which the cohomology occurs.  The first natural problem is therefore to pass from the residue formulas of this paper to a degreewise calculation.

The symmetric-power family provides a concrete starting point.  Let \(V\) be the standard four-dimensional representation and consider
\(\operatorname{Sym}^mV\).  Horozov determined the cohomology of the determinant-twisted \(\mathrm{GL}_4(\mathbb Z)\)-summand degree by
degree~\cite{Horozov2014}.  Combining his result with our Euler-characteristic formulas gives the exact identities and lower
bounds in Corollary~\ref{cor:symmetric-power-complementary-constraints} for the complementary untwisted summand.  In particular, for every even
\(m>0\),
\[
 b_3^0(m)+b_5^0(m)
 =s_{m+2}+b_4^0(m)+b_6^0(m),
\]
where \(s_k=\dim S_k(\mathrm{SL}_2(\mathbb Z))\).  This identity is
exact, but it leaves open both the possible even-degree contribution
and the distribution of the odd-degree contribution between degrees
three and five.

Conjecture~\ref{conj:symmetric-power-degreewise} predicts the smallest
degreewise table compatible with these identities: for even \(m>0\),
the untwisted summand should be concentrated in degree five and have
dimension \(s_{m+2}\).  Thus a proof must establish two facts that do
not follow from the Euler characteristic: the vanishing in degrees four
and six, and the occurrence of the remaining odd cohomology in degree
five rather than degree three.  A finer version of the problem is to
identify the automorphic or boundary source of these classes and to
determine whether they belong to inner, cuspidal, or Eisenstein
cohomology.

\subsection{Boundary, Eisenstein, and cuspidal cohomology} \label{subse:further-boundary-eisenstein}

The Borel--Serre compactification gives the natural framework for a full degreewise calculation~\cite{BorelSerre73}. As recalled in Subsection~\ref{subse:sl4-cohomological-setting}, the boundary of the locally symmetric space for \(\mathrm{SL}_4(\mathbb Z)\) is assembled from faces indexed by proper rational parabolic subgroups. If \(P=L_PN_P\) is a Levi decomposition and \(\mathfrak n_P=\operatorname{Lie}(N_P)\), Kostant's theorem expresses
$$H^\bullet(\mathfrak n_P,M_\lambda)$$
as a sum of explicit highest-weight representations of \(L_P\)~\cite{Kostant61}. The remaining input for the cohomology of an individual boundary face is therefore the cohomology of arithmetic subgroups of the corresponding proper Levi factor.

For the three maximal standard parabolic subgroups, the Levi factors have, subject to the determinant-one condition, the block types
$$
\mathrm{GL}_1\times\mathrm{GL}_3,\qquad
\mathrm{GL}_2\times\mathrm{GL}_2,\qquad
\mathrm{GL}_3\times\mathrm{GL}_1.
$$

Thus much of the \(E_1\)-page of the boundary spectral sequence is governed by cohomology in ranks one, two, and three. The boundary and Eisenstein calculations for \(\mathrm{SL}_3(\mathbb Z)\), \(G_2(\mathbb Z)\), and \(\mathrm{Sp}_4(\mathbb Z)\) provide useful lower-rank models for this procedure; see~\cite{BHHMG2021,BG2022,BHMG2023}.

There is, however, a genuine new difficulty in rational rank three. The boundary spectral sequence for \(\mathrm{SL}_4\) has three columns. A complete calculation must determine the restriction maps between the faces, the differentials of the spectral sequence, and the extension problems that remain after passing to its limiting page. It must also account for possible ghost classes. These are boundary classes lying in the image of the global restriction map but becoming trivial after restriction to all maximal boundary faces.

Ghost classes were introduced by Borel~\cite{Borel84}. Harder subsequently discussed them in the context of Eisenstein cohomology and, in the case of \(\mathrm{GL}_3\), emphasized their relation to the vanishing of certain \(L\)-values~\cite{Harder90}. The surrounding boundary and Eisenstein-cohomology framework was developed further by Schwermer~\cite{Sch90,Schwermer1994}. Rohlfs treated ghost classes for \(\mathrm{SL}_4\)~\cite{Rohlfs96}, while Franke developed a general method for constructing them for \(\mathrm{SL}_n\)~\cite{Franke98}. More recent detailed studies include \(\mathrm{SL}_3\) and \(\mathrm{GL}_3\) with arbitrary highest-weight coefficients~\cite{BHHMG2021}, \(\mathbb Q\)-rank-two orthogonal Shimura varieties~\cite{BajpaiMoyaGhost}, and the Shimura varieties associated with \(\mathrm{GSp}_4\) and \(\mathrm{GU}(2,2)\)~\cite{MoyaGSp4,MoyaGU22}.

Even a complete description of the boundary cohomology does not by itself determine the Eisenstein cohomology. One must decide which boundary classes are obtained by restricting global classes constructed from Eisenstein series. This involves constant terms, intertwining operators, possible poles and residues, and, in some cases, special values of automorphic \(L\)-functions. Carrying out this program for arbitrary highest-weight coefficients of \(\mathrm{SL}_4(\mathbb Z)\) would identify the Eisenstein contribution, clarify the inner cohomology and its cuspidal part, and explain the degreewise origin of the numerical constraints obtained in this paper.

\subsection{Residue polynomials and cohomological nonvanishing} \label{subse:further-residue-polynomials}

For each residue class \(r=(r_1,r_2,r_3)\) modulo \(12\), write $$\ell_i=12m_i+r_i,\qquad 0\leq r_i<12.$$

The Euler-characteristic formulas for \(\mathrm{SL}_4(\mathbb Z)\) and for the two \(\mathrm{GL}_4(\mathbb Z)\)-extensions are then given by explicit polynomials in \((m_1,m_2,m_3)\) of total degree at most two. The integral points on the zero set of any one of these polynomials correspond to weights for which that Euler characteristic alone gives no nonvanishing information. Away from the zero set, a positive value gives a lower bound in degrees four and six, whereas a negative value gives a lower bound in degrees three and five.

Along a one-parameter arithmetic progression $$\lambda(t)=\lambda_0+12t\mu,\qquad t\in\mathbb Z_{\geq0},$$ where \(\lambda_0,\mu\in\mathbb Z_{\geq0}^3\) and \(\mu\neq0\), the relevant residue polynomial specializes to a polynomial in \(t\) of degree at most two. Unless this specialization vanishes identically, its leading nonzero term determines the asymptotic growth.

It would be useful to analyze this finite collection of residue polynomials systematically. One may ask for an explicit description of their zero sets and sign regions, a classification of the initial weights and directions for which the corresponding specializations exhibit quadratic, linear, or bounded growth, and a comparison of the untwisted and determinant-twisted polynomials. The last comparison is especially relevant because, for nontrivial \(\lambda\), the refined bound
$$\sum_q b_q(\lambda)\geq |\chi_\lambda^0|+|\chi_\lambda^1|$$

detects cohomology that the corresponding \(\mathrm{SL}_4(\mathbb Z)\)-Euler characteristic may fail to detect because of cancellation between \(\chi_\lambda^0\) and \(\chi_\lambda^1\).

The rational generating functions in this paper connect these questions with the general theory of vector-partition functions and quasi-polynomial behavior; see~\cite{Sturmfels95}. In the present rank-four case, however, the problem is completely explicit: it reduces to the arithmetic and real geometry of a finite family of polynomials of degree at most two. Such an analysis would turn the residue tables into a more conceptual description of the cohomological nonvanishing regions among the dominant integral weights.

\subsection{Generalizations to higher-rank arithmetic groups}\label{subse:further-higher-rank}
Wall's formula expresses the homological Euler characteristic as a sum over conjugacy classes of torsion elements, with each contribution involving the orbifold Euler characteristic of the corresponding centralizer and the trace of the torsion element on the coefficient system~\cite{Wall}. Horozov developed this method for arithmetic groups and carried out several explicit calculations~\cite{Horozov2005}. If a primitive \(d\)-th root of unity occurs as an eigenvalue of a torsion element of \(\mathrm{GL}_n(\mathbb Z)\), then \(\varphi(d)\leq n\). Thus, for fixed \(n\), the possible cyclotomic orders are bounded. This suggests that the quasi-polynomial dependence on the highest weight found here is part of a more general phenomenon.

For fixed \(n\) and \(\varepsilon\in\{0,1\}\), consider

$$
\chi_{n,\lambda}^{\varepsilon}
=
\chi_h\!\left(
\mathrm{GL}_n(\mathbb Z),
\widetilde{\mathcal M}_\lambda\otimes\det^{\varepsilon}
\right),
$$

where \(\widetilde{\mathcal M}_\lambda\) is a chosen extension of the
\(\mathrm{SL}_n\)-coefficient system \(\mathcal M_\lambda\). Since the determinant takes values in \(\{\pm1\}\) on \(\mathrm{GL}_n(\mathbb Z)\), determinant twists on this group depend only on the parity of the exponent. One may ask whether the functions

$$
\lambda\longmapsto\chi_{n,\lambda}^{\varepsilon}
$$

are quasi-polynomial or piecewise quasi-polynomial in the nonnegative fundamental-weight coordinates of \(\lambda\). If so, can their periods and degrees be determined directly from the cyclotomic types, eigenvalue multiplicities, and centralizer structures of the contributing torsion elements, without computing all the individual residue formulas? Finally, does keeping \(\chi_{n,\lambda}^{0}\) and \(\chi_{n,\lambda}^{1}\) separate yield strictly stronger cohomological lower bounds than those obtained from their sum

$$
\chi_h\!\left(\mathrm{SL}_n(\mathbb Z),\mathcal M_\lambda\right)
=
\chi_{n,\lambda}^{0}+\chi_{n,\lambda}^{1}\,?
$$

Analogous questions concerning quasi-polynomiality, periods, and degrees may be considered for symplectic and orthogonal arithmetic groups. In each case, the main tasks are to organize the torsion-centralizer contributions, compute the traces on highest-weight representations uniformly, and determine the resulting periods and degrees. The formulas established here give complete Euler-characteristic calculations in the \(n=4\) case for both \(\mathrm{SL}_4(\mathbb Z)\) and \(\mathrm{GL}_4(\mathbb Z)\), and thereby provide a concrete test case for formulating and testing
analogous statements in higher rank.

\subsection{The next cohomological step} The present work provides explicit Euler-characteristic formulas for both \(\mathrm{SL}_4(\mathbb Z)\) and \(\mathrm{GL}_4(\mathbb Z)\), together with exact numerical constraints on the corresponding cohomology.  A major next step is a full calculation of the boundary and Eisenstein cohomology of \(\mathrm{SL}_4(\mathbb Z)\), beginning with symmetric-power coefficient systems and then extending to arbitrary irreducible highest-weight coefficient systems.
\clearpage
\appendix
\section{Euler Characteristic Formula Tables for \texorpdfstring{$\mathrm{SL}_4(\mathbb{Z})$}{SL4(Z)}}\label{app:sl4-residue-tables}

This appendix records all $648$ potentially nonzero residue formulas used in Theorem~\ref{thm:sl4-residue-formulas}.  Throughout, $\ell_i=12m_i+r_i$ with $0\leq r_i<12$.  The tables are organized by $r_3$; their row and column labels give $r_1$ and $r_2$, respectively. Residue triples excluded by Proposition~\ref{prop:sl4-parity-vanishing} have value zero and are not repeated here.

\subsection{The Euler characteristic of $\SL_4(\Z)$ with $\ell_1\equiv 0\pmod{2}$}
\subsubsection{Case 1\,: $r_3 = 0$}

{\begin{center}
\tiny\renewcommand{\arraystretch}{2.4}
{
}
\end{center}
}

\subsubsection{Case 2\,: $r_3 = 2$}

{ \begin{center}
\tiny\renewcommand{\arraystretch}{2.4}
{%
}
\end{center}
}

\subsubsection{Case 3\,:  $r_3 = 4$ }

{ \begin{center}
\tiny\renewcommand{\arraystretch}{2.4}
{%
}
\end{center}
}

\subsubsection{Case 4\,:  $r_3 = 6$ }

{ \begin{center}
\tiny\renewcommand{\arraystretch}{2.4}
{%
}
\end{center}
}

\subsubsection{Case 5\,:  $r_3 = 8$ }

{ \begin{center}
\tiny\renewcommand{\arraystretch}{2.4}
{%
}
\end{center}
}

\subsubsection{Case 6\,:  $r_3 = 10$ }

{ \begin{center}
\tiny\renewcommand{\arraystretch}{2.4}
{%
}
\end{center}
}
\subsection{The Euler characteristic of $\mathrm{SL}_4(\Z)$ with $\ell_1\equiv 1\pmod{2}$}

\subsubsection{Case 1\,: $r_3 = 1$}

{\begin{center}
\tiny\renewcommand{\arraystretch}{2.4}
{%
}
\end{center}
}

\subsubsection{Case 2\,: $r_3 = 3$}

{\begin{center}
\tiny\renewcommand{\arraystretch}{2.4}
{%
}
\end{center}
}

\subsubsection{Case 3\,: $r_3 = 5$}

{\begin{center}
\tiny\renewcommand{\arraystretch}{2.4}
{%
}
\end{center}
}

\subsubsection{Case 4\,: $r_3 = 7$}

{\begin{center}
\tiny\renewcommand{\arraystretch}{2.4}
{%
}
\end{center}
}

\subsubsection{Case 5\,: $r_3 = 9$}

{\begin{center}
\tiny\renewcommand{\arraystretch}{2.4}
{%
}
\end{center}
}

\subsubsection{Case 6\,: $r_3 = 11$}

{\begin{center}
\tiny\renewcommand{\arraystretch}{2.4}
{%
}
\end{center}
}

\clearpage
\section{Euler Characteristic Formula Tables for \texorpdfstring{$\mathrm{GL}_4(\mathbb{Z})$}{GL4(Z)}}\label{app:gl4-residue-tables}

This appendix preserves the complete computational data used in Section~\ref{se:Euler-GL4}.  For the residue tables, $\ell_i=12m_i+r_i$ with $0\leq r_i<12$ for $i=1,2,3$, while the determinant exponent is recorded only by $\varepsilon\equiv\ell_4\pmod2$.  The four table families contain all $1296$ potentially nonzero formulas from Theorem~\ref{thm:gl4-residue-formulas}.  

\subsection{The Euler characteristic of $\GL_4(\Z)$ with $ \ell_1\equiv 0\pmod{2}$  and $\ell_4\equiv 0\pmod{2}$}

\subsubsection{Case 1\,: $r_3 = 0$}

{\begin{center}
\tiny\renewcommand{\arraystretch}{2.4}
{
}
\end{center}
}

\subsubsection{Case 2\,: $r_3 = 2$}

{\begin{center}
\tiny\renewcommand{\arraystretch}{2.4}
{%
}
\end{center}
}

\subsubsection{Case 3\,: $r_3 = 4$}

{\begin{center}
\tiny\renewcommand{\arraystretch}{2.4}
{%
}
\end{center}
}

\subsubsection{Case 4\,: $r_3 = 6$}

{\begin{center}
\tiny\renewcommand{\arraystretch}{2.4}
{%
}
\end{center}
}

\subsubsection{Case 5\,: $r_3 = 8$}

{\begin{center}
\tiny\renewcommand{\arraystretch}{2.4}
{%
}
\end{center}
}

\subsubsection{Case 6\,: $r_3 = 10$}

{\begin{center}
\tiny\renewcommand{\arraystretch}{2.4}
{%
}
\end{center}
}

\subsection{The Euler characteristic of $\GL_4(\Z)$ with $\ell_1\equiv 1\pmod{2}$  and $\ell_4\equiv 0\pmod{2}$}

\subsubsection{Case 1\,: $r_3 = 1$}

{\begin{center}
\tiny\renewcommand{\arraystretch}{2.4}
{%
}
\end{center}
}

\subsubsection{Case 2\,: $r_3 = 3$}

{\begin{center}
\tiny\renewcommand{\arraystretch}{2.4}
{%
}
\end{center}
}

\subsubsection{Case 3\,: $r_3 = 5$}

{\begin{center}
\tiny\renewcommand{\arraystretch}{2.4}
{%
}
\end{center}
}

\subsubsection{Case 4\,: $r_3 = 7$}

{\begin{center}
\tiny\renewcommand{\arraystretch}{2.4}
{%
}
\end{center}
}

\subsubsection{Case 5\,: $r_3 = 9$}

{\begin{center}
\tiny\renewcommand{\arraystretch}{2.4}
{%
}
\end{center}
}

\subsubsection{Case 6\,: $r_3 = 11$}

{\begin{center}
\tiny\renewcommand{\arraystretch}{2.4}
{%
}
\end{center}
}

\subsection{The Euler characteristic of $\GL_4(\Z)$ with $ \ell_1\equiv 0\pmod{2}$ and $\ell_4\equiv 1\pmod{2}$}

\subsubsection{Case 1\,: $r_3 = 0$}

{\begin{center}
\tiny\renewcommand{\arraystretch}{2.4}
{%
}
\end{center}
}

\subsubsection{Case 2\,: $r_3 = 2$}

{\begin{center}
\tiny\renewcommand{\arraystretch}{2.4}
{%
}
\end{center}
}

\subsubsection{Case 3\,: $r_3 = 4$}

{\begin{center}
\tiny\renewcommand{\arraystretch}{2.4}
{%
}
\end{center}
}
\subsubsection{Case 4\,: $r_3 = 6$}

{\begin{center}
\tiny\renewcommand{\arraystretch}{2.4}
{%
}
\end{center}
}

\subsubsection{Case 5\,: $r_3 = 8$}

{\begin{center}
\tiny\renewcommand{\arraystretch}{2.4}
{%
}
\end{center}
}

\subsubsection{Case 6\,: $r_3 = 10$}

{\begin{center}
\tiny\renewcommand{\arraystretch}{2.4}
{%
}
\end{center}
}
\subsection{The Euler characteristic of $\GL_4(\Z)$ with $\ell_1\equiv 1\pmod{2}$  and $\ell_4\equiv 1\pmod{2}$}

\subsubsection{Case 1\,: $r_3 = 1$}

{\begin{center}
\tiny\renewcommand{\arraystretch}{2.4}
{%
}
\end{center}
}

\subsubsection{Case 2\,: $r_3 = 3$}

{\begin{center}
\tiny\renewcommand{\arraystretch}{2.4}
{%
}
\end{center}
}

\subsubsection{Case 3\,: $r_3 = 5$}

{\begin{center}
\tiny\renewcommand{\arraystretch}{2.4}
{%
}
\end{center}
}

\subsubsection{Case 4\,: $r_3 = 7$}

{\begin{center}
\tiny\renewcommand{\arraystretch}{2.4}
{%
}
\end{center}
}

\subsubsection{Case 5\,: $r_3 = 9$}

{\begin{center}
\tiny\renewcommand{\arraystretch}{2.4}
{%
}
\end{center}
}

\subsubsection{Case 6\,: $r_3 = 11$}

{\begin{center}
\tiny\renewcommand{\arraystretch}{2.4}
{%
}
\end{center}
}

\section*{Acknowledgements}
JB would like to thank the Department of Mathematics of  Christian-Albrechts-Universit\"at zu Kiel and the Hebrew University of Jerusalem (HUJI) for their support and excellent working conditions. In addition, JB was supported by the European Research Council (ERC) under the European Union’s Horizon Europe research and innovation programme (grant agreement No. 101163794, GroupHype).

The computations in this article were carried out with the aid of PARI/GP.

\nocite{}
\bibliographystyle{abbrv}
\bibliography{BD}

\end{document}